%% file: _main.tex
\documentclass{article}

\input{_header}

\begin{document}
\maketitle

\input{sec/000Abstract}

\input{sec/001Introduction}
\input{sec/002HardInstance}
\input{sec/003NonAnytime}
\input{sec/004Anytime}

\input{sec/009Conclusion}
\input{sec/099Ethics}

\bibliography{references}
\bibliographystyle{plainnat}


\newpage
\input{sec/900TableOfContents}

\newpage
\appendix
\section*{\LARGE APPENDICES}
\input{sec/901Rescaling}
\newpage
\input{sec/902HardInstanceProofs}
\newpage
\input{sec/903NonAnytimeProofs}
\newpage
\input{sec/904AnytimeProofs}
\newpage
\input{sec/905TopK}

\end{document}

%% file: _header.tex
\usepackage[margin = 0.9in]{geometry}
\usepackage{fancyhdr}
\input{math_commands.tex}   

\usepackage[parfill]{parskip}
\usepackage[sort]{natbib}
\usepackage[pagebackref]{hyperref}  
\usepackage{url}    
\usepackage{booktabs}   
\usepackage[nopatch=footnote]{microtype}    
\usepackage[x11names,table,xcdraw,dvipsnames]{xcolor} 
\definecolor{darkblue}{rgb}{0.0,0.0,0.65}
\definecolor{darkred}{rgb}{0.65,0.0,0.0}
\definecolor{darkgreen}{rgb}{0.0,0.5,0.0}
\hypersetup{
	colorlinks = true,
	citecolor  = darkblue,
	linkcolor  = darkred,
	urlcolor   = darkgreen,
}

\usepackage[normalem]{ulem}  
\usepackage{enumerate}  
\usepackage{enumitem}   
\usepackage{graphicx}   
\usepackage{caption}
\usepackage{subcaption} 
\usepackage{multirow}   
\usepackage{multicol}   
\usepackage{etoc}   
\usepackage[ruled,vlined]{algorithm2e}  
\usepackage{setspace}   
\usepackage{tikz}   
\usetikzlibrary{arrows.meta,calc,matrix}
\usepackage{pgfplots}   
\usepgfplotslibrary{groupplots}
\pgfplotsset{compat=1.18}   

\usepackage[capitalize]{cleveref}   
\crefname{section}{Sec.}{Secs.}
\crefname{appendix}{Appx.}{Appcs.}
\crefname{equation}{Eq.}{Eqs.}
\crefname{definition}{Def.}{Defs.}
\crefname{figure}{Fig.}{Figs.}
\crefname{tabular}{Tab.}{Tabs.}
\crefname{table}{Tab.}{Tabs.}
\crefname{algorithm}{Alg.}{Algs.}
\crefname{theorem}{Thm.}{Thms.}
\crefname{lemma}{Lem.}{Lems.}
\crefname{proposition}{Prop.}{Props.}
\crefname{corollary}{Cor.}{Cors.}
\crefname{assumption}{Assum.}{Assums.}
\crefname{remark}{Rmk.}{Rmks.}

\theoremstyle{plain}
\newtheorem{theorem}{Theorem}[section]

\newtheorem{lemma}[theorem]{Lemma}

\theoremstyle{definition}

\theoremstyle{remark}
\newtheorem{remark}[theorem]{Remark}  

\title{\bfseries Stronger Lower Bounds for (Non-)Anytime\\
Acceleration of Gradient Descent}

\author{
    Minchan Jung\thanks{Authors contributed equally to this paper.} \\
    Korea Science Academy of KAIST \\
    \texttt{mickey080929@gmail.com} \\ \\
    \and
    Hanseul Cho$^*$, Chulhee Yun \\
    Graduate School of AI, KAIST \\
    \texttt{\string{jhs4015,chulhee.yun\string}@kaist.ac.kr} \\
}

%% file: math_commands.tex
\usepackage{amsmath,amsfonts,amsthm,amssymb,mathrsfs,bm,bbm,pifont,mathtools,mathdots,nicefrac,thmtools,thm-restate}

\def\norm#1{\left\lVert#1\right\rVert}
\def\inner#1{\left\langle#1\right\rangle}
\def\abs#1{\left|#1\right|}

\def\open#1{\left(#1\right)}
\def\bigset#1{\left\{#1\right\}}  
\def\closed#1{\left[#1\right]}

\let\bar\overline
\let\emptyset\varnothing

\def\Fclass{\mathscr{F}}

\def\1{\bm{1}}

\def\vzero{{\bm{0}}}

\def\ve{{\bm{e}}}

\def\vv{{\bm{v}}}

\def\vx{{\bm{x}}}
\def\vy{{\bm{y}}}

\def\veta{{\bm{\eta}}}

\def\vxi{{\bm{\xi}}}

\def\mI{{\bm{I}}}

\def\mM{{\bm{M}}}

\DeclareMathAlphabet{\mathsfit}{\encodingdefault}{\sfdefault}{m}{sl}
\SetMathAlphabet{\mathsfit}{bold}{\encodingdefault}{\sfdefault}{bx}{n}

\def\gB{{\mathcal{B}}}

\def\gG{{\mathcal{G}}}

\def\gM{{\mathcal{M}}}
\def\gN{{\mathcal{N}}}

\def\gP{{\mathcal{P}}}
\def\gQ{{\mathcal{Q}}}
\def\gR{{\mathcal{R}}}

\def\gU{{\mathcal{U}}}
\def\gV{{\mathcal{V}}}

\def\sN{{\mathbb{N}}}

\newcommand{\R}{\mathbb{R}}

\DeclareMathOperator*{\argmin}{arg\,min}

%% file: sec/000Abstract.tex
\begin{abstract}
The rate-optimal convergence rate of gradient descent (GD) with a fixed step-size is well known to be $\Theta(N^{-1})$ for $L$-Lipschitz smooth convex objectives in the prior art in convex optimization.
Surprisingly, several recent works show that we can accelerate vanilla GD by applying a nonconstant, nonadaptive, deterministic step-size schedule. 
The best-known \textbf{upper bounds} so far in the \emph{non-anytime} \& \emph{anytime} setups are $O(N^{-1.271})$~\citep{altschuler2025acceleration,grimmer2025accelerated,grimmer2025composing,zhang2026accelerated} and 
$O(N^{-1.119})$~\citep{zhang2025anytime}, respectively.
On the other hand, the best reported lower bounds (or barriers) up to date in the \emph{non-anytime} \& \emph{anytime} setups are $\Omega(N^{-1.635})$ and $\Omega(N^{-1.241})$~\citep{ye2026improved}, respectively.
We narrow these gaps by establishing \textbf{stronger lower bounds} for GD's convergence rate in both settings: $\Omega(N^{-1.450})$ for the non-anytime rate bound and $\Omega(N^{-1.184})$ for the anytime rate barrier.
\end{abstract}

%% file: sec/001Introduction.tex
\section{Introduction} 
\label{sec:intro}

We consider an unconstrained minimization problem of a real-valued differentiable function $f$ defined on a finite-dimensional Euclidean space $\R^d$, which is convex and whose gradient is $L$-Lipschitz (i.e., $f$ is $L$-smooth) for some $L>0$.
A fundamental optimization method to approximate a minimum of $f$ is \textit{gradient descent}, or GD for short~\citep{cauchy1847methode,goldstein1962cauchy,polyak1963gradient}.
Given an initial point $\vx_0\in \R^d$ and a sequence $(\eta_t)_{t\in \sN}$ of positive step-sizes (or, \textit{step-size schedule}), it is described as
\begin{align}
    \vx_t = \vx_{t-1} - \eta_t \nabla f (\vx_{t-1}), \qquad t \in \sN. \tag{GD} \label{eq:gd}
\end{align}
We study the \textbf{convergence rate} of GD in terms of the function value gap $f(\vx) - \min_{\vx' \in \R^d} f(\vx')$.
Let $\Fclass_L(\R^d)$ be the class of convex $L$-smooth functions $f:\R^d\to\R$ where the set of its minimizers $X^\star_f :=\argmin f$ satisfies $\emptyset \neq X^\star_f \subsetneq \R^d$.
Then, we define the convergence rate $\gR^{L}_{N}(\veta)$ associated with the number of steps $N\in \sN$, a step-size schedule $\veta = (\eta_1, \dotsc, \eta_N) \in (0,\infty)^N$, and a Lipschitz-smoothness parameter $L>0$ as
\begin{align}
    \gR^{L}_{N} (\veta)
    = \sup_{\substack{d\in \sN,\ f \in \Fclass_{L}(\R^d),\\
    \vx^\star \in X^\star_{f},\ \vx_{0} \in \R^d\setminus X^\star_{f}}} \frac{f(\vx_{N}) - f(\vx^\star)}{L \norm{\vx_{0} - \vx^\star}^2}.
    \label{eq:convergence_rate_R_N}
\end{align}
Without loss of generality, it suffices to study the case $L=1$, since $\gR^L_N(\veta) = \gR^1_N(L\cdot\veta)$, where $L\cdot\veta := (L \eta_t)_{t\in [N]}$: see \cref{sec:rescaling} for discussion.
Thus, we write and study $\gR_N(\veta):= \gR^{1}_N(\veta)$.

Traditional convex optimization literature has focused on small step-sizes $\eta_t \in (0, \frac{2}{L})$, especially which are constant over steps (usually $\eta_t = \frac{1}{L}$).
In such a case, the convergence rate of GD after $N$ iterations is known to be $\Theta(N^{-1})$~\citep{levitin1966constrained,drori2014performance,bubeck2015convex,nesterov2018smooth}.
This rate cannot improve under small step-size schedules of GD. 
The traditional literature has studied modifications to optimization algorithms, such as Nesterov's momentum~\citep{nesterov1983method}, to obtain a tight convergence rate that matches the $\Omega(N^{-2})$ lower bound for more general first-order algorithms~\citep{nemirovsky1983problem}.\footnote{The lower bound considers the class of first-order algorithms which adds a linear combination of gradients at current and previous iterates to the current iterate at every iteration. It includes GD and (Nesterov/Polyak) momentum GD.}
However, the step-size schedule itself has not been recognized as a tool for accelerating GD until recently.

\vspace{-5pt}
\subsection{Step-Size Schedules for Accelerating GD}
\label{sec:intro_relatedwork}
\vspace{-5pt}

Surprisingly, recent studies have highlighted that a carefully designed step-size schedule can achieve an accelerated rate~\citep{altschuler2025acceleration,grimmer2023accelerated,grimmer2024provably,grimmer2025accelerated,grimmer2025composing,zhang2025anytime,zhang2026accelerated}, proven to be strictly faster than the classical $\Theta(N^{-1})$ rate.
It departs from the constant-stepsize assumption and occasionally uses step-sizes much larger than $\frac{2}{L}$.
A notable example is the \emph{silver step-size schedule}~\citep{altschuler2025acceleration}: in the \textbf{non-anytime} setting with a known terminal iteration (of the form $N=2^a-1$ for a positive integer $a\in\sN$), it attains the upper bound $O(N^{-\alpha_{\mathrm{sil}}}) \approx O(N^{-1.271})$.%
\footnote{The exponent is defined as $\alpha_{\mathrm{sil}}:= \log_2 \rho_{\mathrm{sil}}=1.2715\cdots$, where $\rho_{\mathrm{sil}}:= 1+\sqrt{2} = 2.4142\ldots$ is often called the \emph{silver ratio}.
Observe that rounding $\alpha_{\mathrm{sil}}$ to $1.272$ will incorrectly write a convergence upper bound strictly faster than $O(N^{-\alpha_{\mathrm{sil}}})$.
Thus, to ensure a conservative yet valid statement, we round down (up, resp.) the absolute value of the negative exponent when reporting a convergence rate upper bound (lower bound, resp.).}
A few concurrent and subsequent works have also proved the same $O(N^{-\alpha_{\mathrm{sil}}})$ rate, while some of them have improved constant factors by applying concatenation and composition techniques~\citep{grimmer2025accelerated,grimmer2025composing,zhang2026accelerated}.
For the \textbf{anytime} setting, where the terminal iteration $N$ is not fixed before designing a step-size schedule, \citet{zhang2025anytime} proposed a step-size schedule that attains a convergence rate upper bound of $O(N^{-2\alpha_{\mathrm{sil}} / (1+\alpha_{\mathrm{sil}})}) \approx O(N^{-1.119})$.
However, it remains an open question whether an alternative step-size schedule can improve these upper-bound results. 

More recently, a series of lower-bound results has also been reported, narrowing the gap between the upper bounds and the classical $\Omega(N^{-2})$ lower bound by~\citet{nemirovsky1983problem}.
In the \textit{anytime} setting, \citet{tsai2026lower} rules out $\gR_N(\veta) = o(N^{-4/3}) \approx o(N^{-1.334})$ for any nonadaptive\footnote{A step-size schedule is said to be \textit{nonadaptive} if it is fixed before the algorithm is run; that is, it does not vary in response to intermediate quantities (e.g., $\vx_t$, $f(\vx_t)$, and $\nabla f(\vx_t)$) generated while it runs.} deterministic\footnote{A step-size schedule is said to be \textit{deterministic} if running the algorithm with that schedule from the same initial point always produces the same outcome.} step-size schedule.
Their analysis builds on their analysis of large step-sizes and partial sums of step-sizes on a 1-dimensional quadratic and asymmetric Huber function.
On the other hand, extending the domain's dimension for the hard instance function and selecting multiple large steps, \citet{ma2026lower} and \citet{tsai2026improved} later establish \textit{non-anytime} rate lower bounds of $\Omega(N^{-\sqrt{2+\sqrt{3}}}) \approx \Omega(N^{-1.932})$ and $\Omega(N^{-\sqrt{3}}) \approx \Omega(N^{-1.733})$, respectively. 
Very recently, \citet{ye2026improved} report even more improved results of $\Omega(N^{-1.635})$ non-anytime rate bound and $\Omega(N^{-1.241})$ anytime rate barrier.
Readers may refer to \cref{fig:summary_GD_timeline} for a summary of these results.

\subsection{Our Contributions}
\label{sec:intro_contributions}

\begin{figure}[t]
\centering
\resizebox{\linewidth}{!}{\input{fig/gd_lowerbound_timeline.tikz}}
\caption{A summary of (non-)anytime convergence rate upper/lower bounds of GD on smooth convex minimization. We slightly modified Figure~1 by~\citet{tsai2026improved}.
The x-axis is not perfectly scaled linearly in exponents.}\label{fig:summary_GD_timeline}
\end{figure}

In this work, we establish improved lower bounds on the convergence rate of GD for smooth convex minimization in both non-anytime and anytime settings.
We summarize our contributions as the following two main theorems. See \cref{thm:NonAnytimeLowerBoundFull,thm:AnytimeLowerBoundFull} for their full statements.

\begin{restatable}[Non-anytime lower bound, informal]{theorem}{thmNonAnytimeLowerBound}
\label{thm:NonAnytimeLowerBound}
Fix any $N\!\ge\!1$. For any $N$-step schedule
$\veta\in(0,\infty)^N$, it holds that $\gR_N(\veta) = \Omega(N^{-\alpha_\star\!})$ with  $\alpha_\star := \log_2(1+\sqrt{\color{red}3}) = 1.4499\cdots$.
\end{restatable}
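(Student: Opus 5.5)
We plan to prove the bound through a family of explicit hard instances in the spirit of \citet{tsai2026improved,ye2026improved}, together with an induction on $N$. Concretely, we will exhibit, for every schedule $\veta\in(0,\infty)^N$, a convex $1$-smooth function $f$ (a separable sum of one-dimensional quadratics and asymmetric Huber-type pieces), a minimizer $\vx^\star$ and an initial point $\vx_0$ with $f(\vx_N)-f(\vx^\star)\ge c\,N^{-\alpha_\star}\norm{\vx_0-\vx^\star}^2$.

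\textbf{Step 1: reduction to schedule quantities.} We first record two elementary one-dimensional lower bounds that are valid for every schedule.
\begin{itemize}
\item[(a)] On the quadratic $\tfrac12 x^2$, GD multiplies the iterate by $\prod_t(1-\eta_t)$. Hence $\gR_N(\veta)\gtrsim \prod_t|1-\eta_t|^2$, and any schedule with a bounded rate must keep the product of the factors $|1-\eta_t|$ small.
\item[(b)] On a Huber function whose linear regime has slope chosen adversarially, GD moves by at most $\sum_t\eta_t$ times the slope. This forces $\gR_N(\veta)\gtrsim (1+\sum_t\eta_t)^{-1}$.
\end{itemize}
Combining these, the rate is bounded below by a function of the partial sums $S_k=\sum_{t\le k}\eta_t$ and of where the large steps are placed.

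\textbf{Step 2: recursive hard instance.} We will build the instance through a self-similar recursion. Suppose a schedule achieves rate $r_N$. Split it at its largest step $\eta_{t^\ast}$. Using separable coordinates, we glue a Huber coordinate that penalizes the prefix with a coordinate that penalizes the suffix; the suffix coordinate starts from the point where the prefix leaves the iterate. From this gluing we aim to derive a recursive inequality of the form
\begin{align*}
r_{N}\ \ge\ \min_{\text{split}}\ \phi\bigl(r_{N_1},r_{N_2},\eta_{t^\ast}\bigr).
\end{align*}
Its extremal solution should grow by a factor of $1+\sqrt3$ each time $N$ doubles. That is, the worst case is a balanced split whose characteristic equation is $\rho^2-2\rho-2=0$, with root $\rho=1+\sqrt3$. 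This would give $r_N\gtrsim N^{-\log_2(1+\sqrt3)}$. The constant $3$ should come from optimizing the adversary's choice between quadratic curvature and Huber slope. For comparison, the silver schedule's characteristic equation is $\rho^2-2\rho-1=0$.

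\textbf{Step 3: solving the recursion.} We will prove by strong induction that $r_N\ge cN^{-\alpha_\star}$. Unbalanced splits will be handled by the concavity of $x\mapsto x^{\alpha_\star}$.

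\textbf{Main obstacle.} The hard part is Step 2. We must design the glued instance so that the adversary's bound really closes with the constant $3$ rather than the weaker constants of prior work. This requires controlling how a huge step $\eta_{t^\ast}$ interacts with both coordinates at once, without losing convexity or $1$-smoothness at the junction of the Huber pieces.

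Our first attempt will be the following. Parametrize the Huber threshold by $\delta$ and the quadratic weight by $\lambda$, write the terminal gap as an explicit rational function of $(\eta_{t^\ast},S_{\mathrm{pre}},S_{\mathrm{suf}})$, and solve the two-parameter min–max in closed form. We will then verify that the saddle value yields exactly the quadratic $\rho^2-2\rho-2=0$.
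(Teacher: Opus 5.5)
There is a genuine gap. The two ingredients that actually carry the proof, the hard instance and a recursion that closes, are missing, and the specific choices you sketch for them would fail.

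\textbf{The hard instance.} A separable sum $f(\vx)=\sum_i f_i(x^{(i)})$ cannot realize the ``handoff'' you describe. Under GD each coordinate evolves independently from time $0$. By the mediant inequality, the ratio $\frac{f(\vx_N)-f(\vx^\star)}{\norm{\vx_0-\vx^\star}^2}$ of a separable instance is at most the largest of its one-dimensional ratios. So it is never better than the best one-dimensional instance, and no coordinate can ``start from where the prefix leaves the iterate.''

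The coupling must be non-separable. The paper uses the chain $F(\vx)=\frac14\sum_{i\le k}H_{\delta_i}(x^{(i)}-x^{(i+1)}-c_i)+\frac12H_{\delta_{k+1}}(x^{(k+1)}-c_{k+1})$ of one-sided Huber pieces. During the $i$-th gap, piece $i$ stays in its linear regime and pushes $x^{(i+1)}$ up to exactly $c_{i+1}$, so piece $i+1$ stays inactive. The checkpoint step then overshoots by $b_i\delta_i/4$, activating piece $i+1$. With $\delta_i,c_i$ tuned recursively, this gives, for \emph{every} checkpoint set $T$,
$\gR_N(\veta)\ge\frac{1}{4(1+s_{k+1})}\prod_{i=1}^k\bigl(\frac{b_i}{2(2+s_i)}\bigr)^2$.

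Your Step 1 does not replace this. Part (a) is vacuous, since a single step $\eta_t=1$ makes $\prod_t|1-\eta_t|=0$. Part (b) is just the case $T=\emptyset$. Also, no quadratic-versus-Huber trade-off produces the $3$. It arises from eliminating $\mu$ between the no-checkpoint cost $x+y+\mu-2$ and the single-checkpoint cost $2xy/\mu$, that is, from the factor $2$ in $b/(2(2+s))$.

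\textbf{The recursion.} Even with that bound, an induction on a fixed schedule split at its largest step $\mu$ does not close. Encode the bound through the cost $\Psi_\lambda(T;\veta)=(\lambda+s_{k+1})/\prod_i\frac{b_i}{2(2+s_i)}$ and its minimum $\gV_\lambda(\veta)$. The split then gives only
$\gV_\lambda(\veta)\le\min\bigl\{\lambda+\sum_t\eta_t,\ \frac{2}{\mu}\gV_2(\text{left})\,\gV_\lambda(\text{right})\bigr\}$.
The first term involves raw partial sums, which an inductive hypothesis on the pieces' min-costs or rates does not control. Bounding them by $n\mu$ and balancing yields order $n^{\alpha_\star+1/2}$, which does not reproduce the hypothesis.

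The paper instead works with the worst case $\gU_n(\lambda)=\sup_{\veta}\gV_\lambda(\veta)$:
\begin{itemize}
\item It proves that a maximizer exists.
\item It uses log-submodularity of $T\mapsto\Psi_\lambda(T;\veta)$ and perturbation arguments to show that $\emptyset$ and some singleton $\{\tau\}$ are both tight.
\item It deduces that both children are saturated: $\gV_2$ of the left part equals $x$ and $\gV_\lambda$ of the right part equals $y$. Hence $w(w-x-y+2)=2xy$, i.e.\ $w=\gQ(x,y)$.
\end{itemize}
The split point is fixed by tightness, not by being the largest step. The two children also carry different terminal weights ($2$ versus $\lambda$). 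So the induction must be two-parameter, namely $\gU_n(\lambda)^{\nu_\star}\le\lambda^{\nu_\star}+2^{\nu_\star}n$ followed by the choice $\lambda=2N^{\alpha_\star}$, rather than a recursion in $r_N$ alone.

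\textbf{Unbalanced splits.} These are not handled by ``concavity of $x\mapsto x^{\alpha_\star}$''; that map is convex, since $\alpha_\star>1$. What is needed is $\gQ(x,y)^{\nu_\star}<x^{\nu_\star}+y^{\nu_\star}$ for $x,y\ge2$. This reduces to $u^{\alpha_\star}+(1-u)^{\alpha_\star}+2[u(1-u)]^{\alpha_\star}\le1$ on $[0,1]$, and equality at $u=\frac12$ is exactly what pins down $\alpha_\star$. Your equation $\rho^2-2\rho-2=0$ is the correct balanced limit of this, but the proposal reads it off from the target exponent rather than deriving it.
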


\begin{restatable}[Anytime lower bound, informal]{theorem}{thmAnytimeLowerBound}
\label{thm:AnytimeLowerBound}
There is a universal constant $C_\star\!>\!0$ such that, for any infinite schedule
$\veta := (\eta_1,\eta_2,\eta_3,\cdots) \in(0,\infty)^{\sN}$, it holds that $\limsup_{n\to\infty} n^{\beta_\star} \gR_n (\veta_{1:n}) \ge C_\star$ with  $\beta_\star := \frac{2\alpha_\star}{1 + \alpha_\star} = 1.1836\cdots$.
As a result, \textbf{no} positive infinite schedule satisfies $\gR_n (\veta_{1:n}) = o(n^{-\beta_\star})$.
\end{restatable}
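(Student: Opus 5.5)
The plan is to reduce the anytime statement to the non-anytime machinery behind \cref{thm:NonAnytimeLowerBound}, in the spirit of the exponent $\frac{2\alpha}{1+\alpha}$ in the anytime upper bound of \citet{zhang2025anytime}. I will use two hard instances that control $\gR_n(\veta_{1:n})$ from two sides.

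\textbf{Step 1: a quadratic-type bound.} The first instance is a one-dimensional quadratic or Huber-type function, as in \citet{tsai2026lower}. It should penalize large partial sums: whenever some step $\eta_t$ is huge relative to the accumulated sum $S_{t-1}=\sum_{s<t}\eta_s$, the iterate overshoots. I aim for a bound of the form
\begin{align*}
\gR_n(\veta_{1:n}) \gtrsim \frac{1}{S_n} \quad\text{and}\quad \gR_n(\veta_{1:n}) \gtrsim \frac{\eta_t}{S_{t-1}^2}\ \text{for times } t\le n \text{ just after a large step.}
\end{align*}
In the anytime setting this matters because the last step taken can be very large, and the rate must stay good at every $n$.

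\textbf{Step 2: a non-anytime bound in terms of the sum.} The second ingredient is a refinement of the proof of \cref{thm:NonAnytimeLowerBound} (its full version, \cref{thm:NonAnytimeLowerBoundFull}). I would restate it so that it bounds the rate by the total step budget rather than by $N$, roughly $\gR_n \gtrsim n^{-\alpha_\star}$ combined with $S_n \lesssim n^{\alpha_\star}$. Equivalently, a schedule with $S_n \gg n^{\alpha_\star}$ must suffer a rate of order $1/S_n$ only if its large steps are harmful. Concretely, I expect the hard instance to force $S_n \lesssim n^{\alpha_\star}$, up to the contribution of the final large steps.

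\textbf{Step 3: combine along a subsequence.} Suppose $\gR_n = o(n^{-\beta})$ with $\beta=\frac{2\alpha_\star}{1+\alpha_\star}$.
\begin{itemize}
\item From Step 1, $S_n \gg n^{\beta}$.
\item Choose times $n_k$ at which a large step occurs, and write $m$ for the number of small steps preceding it.
\item The non-anytime bound limits the pre-jump sum to $S \lesssim m^{\alpha_\star}$.
\item The jump must be at least of size $S_n$, so it costs $\eta/S^2 \gtrsim S_n/m^{2\alpha_\star}$ at time $n$.
\end{itemize}
Balancing these requirements forces $\gR \gtrsim m^{-\alpha_\star}$ at some time $n \approx m^{(1+\alpha_\star)/2}$. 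Since $m^{-\alpha_\star} = n^{-2\alpha_\star/(1+\alpha_\star)}$, this contradicts the assumption and yields the $\limsup$ bound with a universal constant $C_\star$. The final claim, that no positive infinite schedule satisfies $\gR_n(\veta_{1:n}) = o(n^{-\beta_\star})$, then follows directly.

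\textbf{Main obstacle.} The hard step is Step 2: making the non-anytime construction robust. It has to apply to a prefix of an infinite schedule, give a bound in terms of $S$ rather than $n$, and tolerate being evaluated at intermediate times. I would first try to extract it from \cref{thm:NonAnytimeLowerBoundFull} by rescaling, as in \cref{sec:rescaling}, and by a pigeonhole argument over dyadic scales of $S_t$.
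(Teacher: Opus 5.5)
There is a genuine gap, and it sits in Step 2.

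\textbf{Step 2 is unavailable and, even if granted, too weak.} A bound $S\lesssim m^{\alpha_\star}$ that depends only on the number of steps cannot be read off from \cref{thm:NonAnytimeLowerBoundFull}. That theorem is a lower bound on the rate. An upper bound on the step sum of a prefix only holds under an extra constraint, namely that the record step does not overshoot ($r_n\le 1/4$). Rescaling $L$ or pigeonholing over dyadic scales does not supply this.

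Even if you had $S_n\lesssim n^{\alpha_\star}$ at record times, combining it with $r_n\ge 1/(4(1+S_n))$ only gives $r_n\gtrsim n^{-\alpha_\star}$. That is the non-anytime exponent. Since $\beta_\star<\alpha_\star$, it does not rule out $r_n=o(n^{-\beta_\star})$.

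The missing idea is a sum bound that improves when the largest step is small: at a record time $n$ with $r_n\le 1/4$, $S_n\le C\,nM_n^{1-\nu_\star}$ (\cref{lem:ImprovedRecordSumBound}). The paper proves this from the cost machinery, not from the rate theorem, in three steps.
\begin{enumerate}
\item Checkpoint sets $T\cup\{n\}$ with $T\subseteq[n-1]$ have an empty terminal gap. This gives $r_n\ge M_n^2/(16\,\gV_2(\veta_{1:n-1})^2)$, hence $\gV_2(\veta_{1:n-1})\ge M_n/2$.
\item Split $\veta_{1:n-1}$ at the steps exceeding $h$. Bound each piece by $\gV_2\le 2(m_j+1)^{\alpha_\star}$ (\cref{lem:WorstCaseCostBound}) and apply AM--GM. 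This gives $\gN(h)+1\le 4^{\nu_\star}n\,h^{-\nu_\star}$ for $0<h\le M_n$.
\item Integrate $\gN(h)$ over $h\in(0,M_n]$ to obtain the sum bound.
\end{enumerate}

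\textbf{Steps 1 and 3 also need repair.} The correct overshoot bound is $r_n\ge\frac1{16}\bigl(M_n/(2+S_{n-1})\bigr)^2$ at a record time $n$, with the large step placed last so the terminal gap is empty. Your $\eta_t/S_{t-1}^2$ has the wrong power. For $t<n$ it also drops the factor $1/(1+S_n-S_t)$ coming from the terminal gap.

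In Step 3, the assertion that the jump is at least of size $S_n$ is backwards. The overshoot bound forces $M_n\lesssim S_n\sqrt{r_n}\ll S_n$. It is exactly this smallness of $M_n$, inserted into $S_n\lesssim nM_n^{1-\nu_\star}$, that gives $S_n^{\nu_\star}\lesssim n\,r_n^{(1-\nu_\star)/2}$. Then, with $S_n\gtrsim 1/r_n$, you get $n\,r_n^{(1+\nu_\star)/2}\gtrsim 1$.

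Your relation $n\approx m^{(1+\alpha_\star)/2}\gg m$ is inconsistent with $m$ being the number of steps before the jump at time $n$. It matches the extremal scaling only if $m$ is read as $S_n^{\nu_\star}$.

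Finally, arguing by contradiction against $r_n=o(n^{-\beta_\star})$ does not by itself produce a universal $C_\star$. You need the explicit inequality above along infinitely many record times, which exist when the steps are unbounded. The bounded-step case is separate and trivial: $r_n\ge 1/(4(1+nM_\infty))$ with $\beta_\star>1$.
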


\begin{remark}
    As far as we know, \cref{thm:AnytimeLowerBound} is one of the \emph{first} results that implies the anytime convergence rate in terms of $\gR_N(\veta)$ must be strictly slower than the non-anytime rate ($\because \beta_\star < \alpha_{\mathrm{sil}}$) when \emph{naively} comparing the convergence rate exponents.
    Equivalently, following the discussion by~\citet{kornowski2024open}, it implies that the anytime
    rate in terms of \emph{individual iterates} is provably slower than the anytime rate in terms of the \emph{best iterate} (naturally implied by the non-anytime rate upper bound).
\end{remark}

\paragraph{Notation.}
We use superscripts with parentheses to write coordinate indices.
For a vector $\vx \!=\! \open{x^{(1)},\dotsc,x^{(d)}} \in \R^d$, we set $x^{(d')} \!=\! 0$ if $d' > d$ when needed.
Let $\sN$ be the set of all positive integers and
$[n]:=\{1,\dotsc,n\}$ for $n\in\sN$.
For positive sequences $(a_n)_{n\in\sN}$ and $(b_n)_{n\in\sN}$, write $a_n \!=\! O(b_n)$ and/or $b_n \!=\! \Omega(a_n)$ if $\exists c>0$ s.t. $a_n \!\le\! c\, b_n$ for large enough $n\!\ge\! 1$ (thus, $\limsup_{n\to\infty} a_n/b_n \!<\! \infty$).
Write $a_n \!=\! \Theta(b_n)$ if $a_n \!=\! O(b_n)$ and $a_n \!=\! \Omega(b_n)$.
Write $a_n \!=\! o(b_n)$ and/or $b_n \!=\! \omega(a_n)$ if $\lim_{n\to\infty} a_n/b_n \!=\! 0$.
For $\veta = (\eta_n)_{n\in\sN}$, we write $\veta_{m:n} := (\eta_m,\dotsc,\eta_n)$ if $1 \!\le\! m \!\le\! n$, whereas $\veta_{m:n}$ is an empty sequence if $m>n$.
We often write $\{t_1 < \dotsb < t_k\}$ to denote a strictly increasing sequence $(t_i)_{i\in [k]}$.
For each $d\in \sN$, we equip $\R^d$ with a Euclidean ($\ell_2$) norm $\norm{\cdot}$ and a standard inner product $\inner{\cdot,\cdot}$.
When the ambient dimension $d$ is clear from the context, write $\ve_i$ as the $i$-th unit vector in $\R^d$: all components are zero, except for a single 1 at its $i$-th component.
For a differentiable function $f: \R^d \to \R$, we say it is convex if $f(\vx) \!\ge\! f(\vy) \!+\! \inner{\nabla f(\vy), \vx \!-\! \vy}$ ($\forall \vx,\vy \in \R^d$); for $L>0$, we say $f$ is $L$-smooth if $\norm{\nabla f(\vx) \!-\! \nabla f(\vy)} \!\le\! L \norm{\vx \!-\! \vy}$ ($\forall \vx,\vy \in \R^d$).

%% file: fig/gd_lowerbound_timeline.tikz

\definecolor{lowerA}{HTML}{F2A37E}
\definecolor{lowerB}{HTML}{F7C4A8}
\definecolor{lowerC}{HTML}{F9D88A}
\definecolor{lowerD}{HTML}{F8D17A}
\definecolor{ours}{HTML}{F6C65B}
\definecolor{upperA}{HTML}{D1EDC4}
\definecolor{upperB}{HTML}{AFE198}
\definecolor{lowerText}{HTML}{7A3A1D}
\definecolor{oursText}{HTML}{A8470D}
\definecolor{upperText}{HTML}{315F23}
\definecolor{rowLabel}{HTML}{888888}

\begin{tikzpicture}[x=1cm,y=1cm,>=Latex]

  \def\ytop{1.45}
  \def\ybot{-1.45}

  \def\xL{-2.80}
  \def\xTwo{3.00}          
  \def\xMa{5.20}           
  \def\xTsaiB{8.00}        
  \def\xYeNonAny{10.60}    
  \def\xOursOne{13.60}     
  \def\xTsaiA{15.20}       
  \def\xUpperNonAny{17.00} 
  \def\xYeAny{17.80}       
  \def\xOursTwo{21.20}     
  \def\xUpperAny{23.60}    
  \def\xOne{26.20}         
  \def\xR{32.00}

  \fill[lowerA] (\xL,0) rectangle (\xTwo,\ytop);
  \fill[lowerB] (\xTwo,0) rectangle (\xMa,\ytop);
  \fill[lowerC] (\xMa,0) rectangle (\xTsaiB,\ytop);
  \fill[lowerD] (\xTsaiB,0) rectangle (\xYeNonAny,\ytop);
  \fill[ours]   (\xYeNonAny,0) rectangle (\xOursOne,\ytop);

  \fill[upperA] (\xUpperNonAny,0) rectangle (\xOne,\ytop);
  \fill[upperB] (\xOne,0) rectangle (\xR,\ytop);

  \fill[lowerA] (\xL,\ybot) rectangle (\xTwo,0);
  \fill[lowerB] (\xTwo,\ybot) rectangle (\xTsaiA,0);
  \fill[lowerC] (\xTsaiA,\ybot) rectangle (\xYeAny,0);
  \fill[ours]   (\xYeAny,\ybot) rectangle (\xOursTwo,0);

  \fill[upperA] (\xUpperAny,\ybot) rectangle (\xOne,0);
  \fill[upperB] (\xOne,\ybot) rectangle (\xR,0);

  \draw[-{Latex[length=5mm,width=4mm]},line width=1.15pt]
    (\xL,0) -- (\xR+0.12,0);

  \node[
    anchor=west,
    text=rowLabel,
    font=\fontsize{17.28}{20}\selectfont\bfseries 
  ] at (\xL,2.05) {Non-Anytime};

  \node[
    anchor=west,
    text=rowLabel,
    font=\fontsize{17.28}{20}\selectfont\bfseries 
  ] at (\xL,-2.05) {Anytime};

  \node[
    align=center,
    text=lowerText,
    text width=0.9*(\xTwo-\xL)*1cm,
    font=\fontsize{14}{16}\selectfont
  ] at ({(\xL+\xTwo)/2},0.72)
    {\citet{nemirovsky1983problem}};

  \node[
    align=center,
    text=lowerText,
    text width=0.9*(\xMa-\xTwo)*1cm,
    inner sep=0pt,
    font=\fontsize{10}{11.5}\selectfont
  ] at ({(\xTwo+\xMa)/2},0.72)
    {\makeatletter
     \def\NAT@nmfmt#1{\shortstack{Ma and\\Chen}}
     \citet{ma2026lower}
     \makeatother};

  \node[
    align=center,
    text=lowerText,
    text width=0.9*(\xTsaiB-\xMa)*1cm,
    font=\fontsize{12}{14.4}\selectfont
  ] at ({(\xMa+\xTsaiB)/2},0.72)
    {\citet{tsai2026improved}};

  \node[
    align=center,
    text=lowerText,
    text width=0.9*(\xYeNonAny-\xTsaiB)*1cm,
    inner sep=0pt,
    font=\fontsize{11}{13}\selectfont
  ] at ({(\xTsaiB+\xYeNonAny)/2},0.72)
    {\citet{ye2026improved}};

  \node[
    align=center,
    text=oursText,
    text width=0.9*(\xOursOne-\xYeNonAny)*1cm,
    font=\fontsize{12}{15}\selectfont\bfseries
  ] at ({(\xYeNonAny+\xOursOne)/2},0.72)
    {\cref{thm:NonAnytimeLowerBound}\\(Ours)};

  \node[
    align=center,
    text=upperText,
    text width=0.9*(\xOne-\xUpperNonAny)*1cm,
    font=\fontsize{10}{10.8}\selectfont
  ] at ({(\xUpperNonAny+\xOne)/2},0.72)
    {\citet{altschuler2025acceleration}\\\citet{grimmer2025accelerated,grimmer2025composing}\\\citet{zhang2026accelerated}};

  \node[
    align=center,
    text=upperText,
    text width=0.9*(\xR-\xOne)*1cm,
    font=\fontsize{14.0}{16}\selectfont
  ] at ({(\xOne+\xR)/2},0.72)
    {\citet{levitin1966constrained}};

  \node[
    align=center,
    text=lowerText,
    text width=0.9*(\xTwo-\xL)*1cm,
    font=\fontsize{14}{16}\selectfont
  ] at ({(\xL+\xTwo)/2},-0.72)
    {\citet{nemirovsky1983problem}};

  \node[
    align=center,
    text=lowerText,
    text width=0.9*(\xTsaiA-\xTwo)*1cm,
    font=\fontsize{14}{16}\selectfont
  ] at ({(\xTwo+\xTsaiA)/2},-0.72)
    {\citet{tsai2026lower}};

  \node[
    align=center,
    text=lowerText,
    text width=0.9*(\xYeAny-\xTsaiA)*1cm,
    font=\fontsize{11}{13}\selectfont
  ] at ({(\xTsaiA+\xYeAny)/2},-0.72)
    {\citet{ye2026improved}};

  \node[
    align=center,
    text=oursText,
    text width=0.9*(\xOursTwo-\xYeAny)*1cm,
    font=\fontsize{12}{14}\selectfont\bfseries
  ] at ({(\xYeAny+\xOursTwo)/2},-0.72)
    {\cref{thm:AnytimeLowerBound}\\(Ours)};

  \node[
    align=center,
    text=upperText,
    text width=0.9*(\xOne-\xUpperAny)*1cm,
    font=\fontsize{10}{12}\selectfont
  ] at ({(\xUpperAny+\xOne)/2},-0.72)
    {\citet{zhang2025anytime}};

  \node[
    align=center,
    text=upperText,
    text width=0.9*(\xR-\xOne)*1cm,
    font=\fontsize{14.0}{16}\selectfont
  ] at ({(\xOne+\xR)/2},-0.72)
    {\citet{levitin1966constrained}};


  \draw[dashed,line width=0.95pt]
    (\xTwo,-3.25) -- (\xTwo,3.05);

  \draw[dashed,line width=0.95pt]
    (\xMa,0) -- (\xMa,4.05);

  \draw[dashed,line width=0.95pt]
    (\xTsaiB,0) -- (\xTsaiB,3.05);

  \draw[dashed,line width=0.95pt]
    (\xYeNonAny,0) -- (\xYeNonAny,4.05);

  \draw[dashed,line width=0.95pt]
    (\xOursOne,0) -- (\xOursOne,3.05);

  \draw[dashed,line width=0.95pt]
    (\xUpperNonAny,0) -- (\xUpperNonAny,4.05);

  \draw[dashed,line width=0.95pt]
    (\xOne,-3.25) -- (\xOne,3.05);

  \draw[dashed,line width=0.95pt]
    (\xTsaiA,-3.20) -- (\xTsaiA,0);

  \draw[dashed,line width=0.95pt]
    (\xYeAny,-4.10) -- (\xYeAny,0);

  \draw[dashed,line width=0.95pt]
    (\xOursTwo,-3.20) -- (\xOursTwo,0);

  \draw[dashed,line width=0.95pt]
    (\xUpperAny,-4.10) -- (\xUpperAny,0);


  \node[
    fill=white,
    inner sep=1.5pt,
    font=\fontsize{20.74}{22}\selectfont
  ] at (\xTwo,3.18)
    {$N^{-2}$};

  \node[
    fill=white,
    inner sep=1.5pt,
    font=\fontsize{20.74}{22}\selectfont
  ] at (\xMa,4.18)
    {$N^{-1.932}$};

  \node[
    fill=white,
    inner sep=1.5pt,
    font=\fontsize{20.74}{22}\selectfont
  ] at (\xTsaiB,3.18)
    {$N^{-1.733}$};

  \node[
    fill=white,
    inner sep=1.5pt,
    font=\fontsize{20.74}{22}\selectfont
  ] at (\xYeNonAny,4.18)
    {$N^{-1.635}$};

  \node[
    fill=ours,
    inner sep=4pt,
    font=\fontsize{20.74}{22}\selectfont
  ] at (\xOursOne,3.18)
    {$N^{-1.450}$};

  \node[
    fill=white,
    inner sep=1.5pt,
    font=\fontsize{20.74}{22}\selectfont
  ] at (\xUpperNonAny,4.18)
    {$N^{-1.271}$};

  \node[
    fill=white,
    inner sep=1.5pt,
    font=\fontsize{20.74}{22}\selectfont
  ] at (\xTwo,-3.38)
    {$N^{-2}$};

  \node[
    fill=white,
    inner sep=1.5pt,
    font=\fontsize{20.74}{22}\selectfont
  ] at (\xTsaiA,-3.38)
    {$N^{-1.334}$};

  \node[
    fill=white,
    inner sep=1.5pt,
    font=\fontsize{20.74}{22}\selectfont
  ] at (\xYeAny,-4.5)
    {$N^{-1.241}$};

  \node[
    fill=ours,
    inner sep=4pt,
    font=\fontsize{20.74}{22}\selectfont
  ] at (\xOursTwo,-3.38)
    {$N^{-1.184}$};

  \node[
    fill=white,
    inner sep=1.5pt,
    font=\fontsize{20.74}{22}\selectfont
  ] at (\xUpperAny,-4.5)
    {$N^{-1.119}$};

  \node[
    fill=white,
    inner sep=1.5pt,
    font=\fontsize{20.74}{22}\selectfont
  ] at (\xOne,-3.38)
    {$N^{-1}$};

\end{tikzpicture}

%% file: sec/002HardInstance.tex
\vspace{-5pt}
\section{Hard Instance Construction Based on Checkpoint Selection}
\label{sec:hard_instance}
\vspace{-5pt}

Fix any $N\in \sN$ and a finite step-size schedule $\veta = (\eta_t)_{t\in [N]}$.
In this section, we aim to construct a hard instance function $F$ that depends on $\veta$ and establish a general lower bound for $\gR_N(\veta)$.
To explain how we construct $F$, fix any $k \in \{0\} \cup [N]$.
Intuitively, we divide $\veta$ into $k+1$ blocks by selecting $k$ \emph{checkpoint timesteps}. Then, we assign a \emph{component function} to each block, designed to activate sequentially: only one new component function activates in each block.
The idea of dividing a step-size schedule into several chunks has also been independently applied in a concurrent work by \cite{ma2026lower}, although our construction of a hard instance is completely different from theirs.

Choose $k$ checkpoint timesteps $T = \{t_1< \dotsb <t_k\}\subseteq [N]$, and set $t_0=0$ and $t_{k+1}=N+1$.
These checkpoints partition $N$ step-sizes into $k+1$ blocks.
Define
\begin{align}
\begin{aligned}
    \text{\emph{gap} ($t_{i-1} < t <t_{i}$):}& \quad
    s_i(T;\veta)
    = \textstyle\sum_{\tau = t_{i-1} + 1}^{t_i - 1} \eta_{\tau},
    &&i\in [k+1],  \\
    \text{\emph{checkpoint} ($t=t_{i}$):}& \quad
    b_i(T;\veta)
    = \eta_{t_i},
    &&i\in [k]. 
\end{aligned} \label{eq:gap_and_checkpoint}
\end{align}
Thus, for $i\in [k]$, the block $i$ consists of a \emph{checkpoint} step-size $b_i$ and a \emph{gap} preceding it (with sum $s_i$).
The terminal block  ($i=k+1$) consists only of the final gap $s_{k+1}$.
We omit $T$ and/or $\veta$ from the notation when clear from the context.
When $k=0$, we simply let $s_1 = \sum_{\tau=1}^N \eta_\tau$.

We associate one component function with each block and construct the hard instance $F:\R^{k+1}\to\R$ as their sum.
During each block, at most 1 new component function produces a nonzero gradient, and the checkpoint step $b_i$ activates the $(i+1)$-th component.
Hence, the selected checkpoints are exactly the transitions between successive 
active components.

For each component, we use a one-sided Huber function $H_{\delta}(\cdot)$ with $\delta>0$, defined as
\begin{equation}
    H_\delta (z) = \begin{cases}
    \begin{aligned}
        &0, && \text{if } z \le 0; &&\text{\color{lightgray}(inactive; zero gradient)}\\
        &\tfrac12 z^2, && \text{if } 0 \le z \le \delta; &&\text{\color{lightgray}(quadratic region)}\\
        &\delta z - \tfrac{\delta^2}{2}, &&\text{if } z \ge \delta.  &&\text{\color{lightgray}(nonzero affine region)}
    \end{aligned}
    \end{cases}  \label{eq:one_sided_huber}
\end{equation}
Its derivative $H'_\delta (z) = \min\{\max\{0, z\}, \delta\}$ is nondecreasing and 1-Lipschitz. Hence, $H_\delta$ is nonnegative, convex, and 1-smooth. 
We also define $i$-th \emph{margin} $z_i(\vx)$ as
\begin{align*}
    z_i(\vx) := x^{(i)} - x^{(i+1)} - c_i \quad (i=1,\cdots,k),\qquad z_{k+1}(\vx) := x^{(k+1)} - c_{k+1}, 
\end{align*}
for some constants $c_i \ge 0$ ($i \in [k+1]$) to be determined later.
We construct the hard instance as
\begin{align}
    F(\vx) := \frac14 \sum_{i=1}^k H_{\delta_i}\open{z_i(\vx)} + \frac12 H_{\delta_{k+1}}\open{z_{k+1}(\vx)}, \label{eq:hard_instance}
\end{align}
for some constants $\delta_i > 0$ ($i \in [k+1]$) to be determined later. 

The following proposition proves some properties of $F$; namely, $F\in \Fclass_1(\R^{k+1})$, $F\ge 0$, and $\vzero \in X_F^\star$. This is why this function is sufficient for analyzing $\gR_N(\veta)$. 
\begin{restatable}{proposition}{propHardInstanceProperties}
\label{prop:HardInstanceProperties}
    The function $F$ (\cref{eq:hard_instance}) is nonnegative, convex, and 1-smooth. 
    Moreover, the origin $\vzero$ is a minimizer since $F(\vzero) = 0$.
    \end{restatable}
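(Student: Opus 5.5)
Nonnegativity and convexity come directly from the one-sided Huber function $H_\delta$ in \cref{eq:one_sided_huber}. $H_\delta\ge 0$, and $H_\delta$ is convex and nondecreasing. Each margin $z_i(\vx)$ is an affine function of $\vx$, so every composition $H_{\delta_i}(z_i(\vx))$ is convex and nonnegative. $F$ is a nonnegative combination of these terms, with weights $\tfrac14$ and $\tfrac12$, so it is convex and $F\ge 0$.

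For the minimizer claim, every $c_i\ge 0$, so at the origin we have $z_i(\vzero) = -c_i \le 0$ for all $i\in[k+1]$. Each term therefore vanishes, since $H_\delta(z)=0$ for $z\le 0$. Hence $F(\vzero)=0 \le F(\vx)$ for all $\vx$, and $\vzero\in X_F^\star$. We also need $X_F^\star\ne\R^{k+1}$ so that $F\in\Fclass_1(\R^{k+1})$. This holds because taking $x^{(k+1)}$ large makes $z_{k+1}>0$, which gives $F>0$.

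The main step is $1$-smoothness. Write $z_i(\vx)=\inner{\va_i,\vx}-c_i$, where $\va_i=\ve_i-\ve_{i+1}$ for $i\le k$ and $\va_{k+1}=\ve_{k+1}$. Then
\begin{align*}
\nabla F(\vx) = \textstyle\sum_{i=1}^{k+1} w_i\, H'_{\delta_i}(z_i(\vx))\,\va_i, \qquad w_i=\tfrac14\ (i\le k),\ w_{k+1}=\tfrac12 .
\end{align*}
Let $A$ be the matrix with rows $\va_i$ and $W=\mathrm{diag}(w_i)$. Because each $H'_{\delta_i}$ is nondecreasing and $1$-Lipschitz, for any $\vx,\vy$ there is a diagonal $D=\mathrm{diag}(d_i)$ with $d_i\in[0,1]$ such that $\nabla F(\vx)-\nabla F(\vy) = A^\top W D A(\vx-\vy)$. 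It then suffices to show $\norm{A^\top W D A}\le 1$ for every such $D$.

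This matrix is symmetric positive semidefinite and is dominated by $A^\top W A$ in the Loewner order, so it is enough to bound $\lambda_{\max}(A^\top W A)\le 1$. To do this I will bound the quadratic form $\sum_i w_i\inner{\va_i,\vv}^2$. Applying $(a-b)^2\le 2a^2+2b^2$ gives
\begin{align*}
\tfrac14\textstyle\sum_{i\le k}(v^{(i)}-v^{(i+1)})^2+\tfrac12 (v^{(k+1)})^2 \le \tfrac12\sum_{i\le k}\bigl((v^{(i)})^2+(v^{(i+1)})^2\bigr)+\tfrac12 (v^{(k+1)})^2 .
\end{align*}
Each coordinate appears at most twice in total across these terms, so the right-hand side is at most $\norm{\vv}^2$. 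This can also be checked directly: coordinate $k+1$ receives weight $\tfrac12+\tfrac12=1$, and the interior coordinates receive at most $1$. Hence $\norm{\nabla F(\vx)-\nabla F(\vy)}\le\norm{\vx-\vy}$.

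The only subtle point is justifying the diagonal-$D$ representation. It follows coordinatewise from the mean-value form: $H'(a)-H'(b)=d\,(a-b)$ with $d\in[0,1]$, which holds because $H'$ is monotone and $1$-Lipschitz.
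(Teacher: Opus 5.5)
Your proposal is correct and matches the paper's proof essentially step for step. Like the paper, you write $\nabla F(\vx)-\nabla F(\vy)=\mM(\vx-\vy)$ with slopes $\vartheta_i\in[0,1]$ coming from the monotone, $1$-Lipschitz $H'_{\delta_i}$, and then bound the quadratic form using $(a-b)^2\le 2(a^2+b^2)$; your extra check that $X_F^\star\neq\R^{k+1}$, needed for $F\in\Fclass_1(\R^{k+1})$, is a small point the paper leaves implicit.
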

As a result of \cref{prop:HardInstanceProperties}, for a given finite schedule $\veta\in(0, \infty)^N$, we have
\begin{align}
   \gR_N(\veta) \ge \frac{F(\vx_N)-F(\vzero)}{\norm{\vx_0-\vzero}_2^2} = \frac{F(\vx_N)}{\norm{\vx_0}_2^2}. \label{eq:R_N_to_funcval}
\end{align}
Thus, it suffices to lower-bound the right-hand side of this equation.

Next, we present a key technical lemma about one-step GD dynamics on $F$ as below.
\begin{restatable}[One-step dynamics]{lemma}{lemOneStepGD}
\label{lem:OneStepGD}
    Fix $\vx\in \R^{k+1}$ and let $\vx_+ := \vx - \eta \nabla F(\vx)$ for $\eta>0$.
    \begin{enumerate}[label=(\roman*),leftmargin=*]
        \item \label{item:margin_one_step_lower_bound} For each $i\in [k+1]$, if $z_i(\vx) \ge \delta_i$, then $z_i(\vx_+) \ge z_i(\vx) - \frac{\eta \delta_i}{2}$.
        \item \label{item:margin_one_step_edge} For each $i\in[k]$, if $z_i(\vx) \ge \delta_i$ and $z_{j}(\vx) \le 0$ for all $j = i+1, \cdots, k+1$, then 
        \begin{align*}
            x_+^{(i+1)} = x^{(i+1)} + \frac{\eta \delta_i}{4}, \qquad x_+^{(j')} = x^{(j')} \quad (j'\ge i+2).
        \end{align*}
    \end{enumerate}
\end{restatable}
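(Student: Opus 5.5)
We compute the gradient of $F$ coordinatewise and then track how one step moves each margin. Write $h_i := H'_{\delta_i}(z_i(\vx)) \in [0,\delta_i]$ for $i\in[k+1]$, and set $h_0 := 0$. Since $z_i$ depends on $x^{(i)}$ with coefficient $+1$ and on $x^{(i+1)}$ with coefficient $-1$ (for $i\le k$), and $z_{k+1}$ depends only on $x^{(k+1)}$, the chain rule applied to \cref{eq:hard_instance} gives
\begin{align*}
    \partial_j F(\vx) = \tfrac14 h_j - \tfrac14 h_{j-1} \quad (1\le j\le k), \qquad
    \partial_{k+1} F(\vx) = \tfrac12 h_{k+1} - \tfrac14 h_k .
\end{align*}
Hence $x_+^{(j)} = x^{(j)} - \eta\,\partial_j F(\vx)$, and for $i\le k$ we get $z_i(\vx_+) = z_i(\vx) - \eta(\partial_i F - \partial_{i+1} F)$.

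For part \ref{item:margin_one_step_lower_bound}, we expand the change in each margin. For $i\le k-1$,
\begin{align*}
    \partial_i F - \partial_{i+1}F = \tfrac14(h_i - h_{i-1}) - \tfrac14(h_{i+1}-h_i) = \tfrac12 h_i - \tfrac14 h_{i-1} - \tfrac14 h_{i+1} \le \tfrac12 h_i .
\end{align*}
For $i=k$, we have $\partial_k F - \partial_{k+1}F = \tfrac14(h_k-h_{k-1}) - \tfrac12 h_{k+1} + \tfrac14 h_k \le \tfrac12 h_k$. For $i=k+1$, we have $\partial_{k+1}F = \tfrac12 h_{k+1} - \tfrac14 h_k \le \tfrac12 h_{k+1}$. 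In every case the margin decreases by at most $\tfrac{\eta}{2}h_i$, and the hypothesis $z_i(\vx)\ge\delta_i$ gives $h_i = \delta_i$, which yields $z_i(\vx_+) \ge z_i(\vx) - \tfrac{\eta\delta_i}{2}$. The signs work out because every $h_j\ge0$; this is exactly why the weights $\tfrac14$ and $\tfrac12$ were chosen, so that the last component is consistent with the others.

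For part \ref{item:margin_one_step_edge}, the hypothesis $z_j(\vx)\le 0$ gives $h_j = 0$ for all $j\ge i+1$, and $h_i=\delta_i$. Then $\partial_{i+1}F = -\tfrac14 h_i = -\tfrac{\delta_i}{4}$, and this holds both when $i+1\le k$ and when $i+1 = k+1$, since $h_{k+1}=0$. So $x_+^{(i+1)} = x^{(i+1)} + \tfrac{\eta\delta_i}{4}$. For every $j'\ge i+2$, both $h_{j'}$ and $h_{j'-1}$ vanish, so $\partial_{j'}F=0$ and $x_+^{(j')}=x^{(j')}$.

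The only delicate point is treating the boundary index $k+1$ separately from the indices $i\le k$, because of its coefficient $\tfrac12$; the rest is direct bookkeeping.
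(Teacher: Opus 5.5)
Your proof is correct and follows essentially the same route as the paper. You compute $\nabla F$ by the chain rule, write the one-step change of each margin as $-\tfrac{\eta}{2}h_i$ plus nonnegative neighbor terms (treating the boundary index $k+1$ separately), and read off part (ii) from $h_j=0$ for $j\ge i+1$; your convention $h_0:=0$ also absorbs the paper's separate cases $k=0$ and $k=1$, and your remark about the choice of weights $\tfrac14,\tfrac12$ is not needed, since only $h_j\ge 0$ matters.
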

We defer the proofs of \cref{prop:HardInstanceProperties,lem:OneStepGD} to \cref{subsec:hard_instance_properties} and \cref{subsec:one_step_dynamics}.
Observe that they hold regardless of the choice of parameters $c_i\ge 0$ and $\delta_i>0$.

Based on these ingredients, we now establish a general lower bound on $\gR_N(\veta)$ in terms of the block format of $\veta \mapsto (s_1, b_1, \dotsc,s_k,b_k,s_{k+1})$ due to the checkpoint timesteps $T=\bigset{t_1<\dots<t_k}$.
\begin{restatable}[General lower bound]{lemma}{lemGeneralLowerBound}
\label{lem:GeneralLowerBound}
    Fix any checkpoint timesteps $T=\bigset{t_1<\dots<t_k}\subseteq[N]$, gaps $s_i:=s_i(T;\veta)$, and checkpoint steps $b_i:=b_i(T;\veta)$: see \cref{eq:gap_and_checkpoint}.
    Then, we have a lower bound on the GD convergence rate, defined in \cref{eq:convergence_rate_R_N}, as
    \begin{align}
        \mathcal R_N(\veta)
        \ge \frac{1}{4(1+s_{k+1})}\prod_{i=1}^k\left(\frac{b_i}{2(2+s_i)}\right)^2.
        \label{eq:general_normalized_lower_bound}
    \end{align}
\end{restatable}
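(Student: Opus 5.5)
The plan is to apply \cref{eq:R_N_to_funcval} with a concrete choice of $\vx_0$, $c_i$ and $\delta_i$ that makes GD on $F$ follow a relay. During block $i$, the margin $z_i$ stays in its affine region ($z_i\ge\delta_i$), while all later margins $z_j$, $j>i$, stay inactive ($\le 0$). By \cref{lem:OneStepGD}\ref{item:margin_one_step_edge}, coordinate $x^{(i+1)}$ then grows by exactly $\eta_t\delta_i/4$ per step, and all higher coordinates are frozen. The gap steps are absorbed by an offset $c_{i+1}$, so that only the checkpoint step $b_i$ pushes $z_{i+1}$ above zero, and by exactly $b_i\delta_i/4$. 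We take
\begin{align*}
\vx_0=a_1\ve_1,\qquad c_1=0,\qquad c_{i+1}=\tfrac{s_i\delta_i}{4}\ (i\in[k]).
\end{align*}

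\textbf{Induction.} The claim for $i=1,\dots,k$ is: at the start of block $i$ (time $t_{i-1}$), $x^{(j)}=0$ for $j\ge i+1$, and $z_i\ge \delta_i(1+s_i/2)$. For $i=1$ this holds if $a_1=\delta_1(2+s_1)/2$.

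Within block $i$, \cref{lem:OneStepGD}\ref{item:margin_one_step_lower_bound} says $z_i$ drops by at most $\eta_t\delta_i/2$ per step. Hence $z_i\ge\delta_i$ holds up to and including time $t_i-1$, i.e.\ just before the checkpoint step.

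Meanwhile, $x^{(i+1)}$ rises by $\eta_t\delta_i/4$ per step, so after the gap it equals $s_i\delta_i/4=c_{i+1}$. Thus $z_{i+1}\le 0$ throughout the gap, and the coordinates beyond $i+1$ stay at zero. This keeps the hypothesis of \cref{lem:OneStepGD}\ref{item:margin_one_step_edge} valid at each step.

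The checkpoint step then gives $z_{i+1}(\vx_{t_i})=b_i\delta_i/4$. We choose $\delta_{i+1}$ so that this equals $\delta_{i+1}(1+s_{i+1}/2)$, i.e.\ $\delta_{i+1}=b_i\delta_i/(2(2+s_{i+1}))$ for $i<k$, which closes the induction. Earlier components $j<i$ may still be active, but they only affect $x^{(i)}$ and lower coordinates. Any push they give to $x^{(i)}$ only increases $z_i$, and \cref{lem:OneStepGD}\ref{item:margin_one_step_lower_bound} already accounts for all of this.

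\textbf{Terminal block.} Let $Z:=z_{k+1}(\vx_{t_k})=b_k\delta_k/4$; when $k=0$, $Z:=a_1$. Set $s:=s_{k+1}$ and choose $\delta_{k+1}=Z/(1+s)$. Here, unlike the earlier blocks, I deliberately do \emph{not} require $z_{k+1}$ to start at $\delta_{k+1}(1+s/2)$, because that would cost a factor $(2+s)^{-2}$ instead of $(1+s)^{-1}$.

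By \cref{lem:OneStepGD}\ref{item:margin_one_step_lower_bound}, as long as $z_{k+1}\ge\delta_{k+1}$ it decreases by at most $\eta_t\delta_{k+1}/2$ per step. Hence
\begin{align*}
z_{k+1}(\vx_N)\ge Z-\frac{s\delta_{k+1}}{2}=\frac{Z(2+s)}{2(1+s)}\ge\delta_{k+1},
\end{align*}
so the inductive use of the lemma is legitimate throughout the final block. Since all $H\ge0$,
\begin{align*}
F(\vx_N)\ge\tfrac12\Big(\delta_{k+1}z_{k+1}-\tfrac{\delta_{k+1}^2}{2}\Big)\ge\tfrac{\delta_{k+1}}{2}\cdot\frac{Z(2+s)-Z}{2(1+s)}=\frac{Z^2}{4(1+s)}.
\end{align*}

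\textbf{Final computation.} We have $\|\vx_0\|^2=a_1^2$. Unrolling the recursion,
\begin{align*}
\frac{Z}{a_1}=\frac{b_k}{4}\cdot\frac{2}{2+s_1}\prod_{i<k}\frac{b_i}{2(2+s_{i+1})}=\prod_{i=1}^k\frac{b_i}{2(2+s_i)}.
\end{align*}
Plugging this into \cref{eq:R_N_to_funcval} yields \cref{eq:general_normalized_lower_bound}.

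The main obstacle is the bookkeeping of the invariant. We must check that each $z_j$ with $j>i$ really stays $\le 0$ during block $i$. For $j=i+1$ this is the choice $c_{i+1}=s_i\delta_i/4$. For $j\ge i+2$ it holds because those coordinates are frozen at $0$ and $c_j\ge0$. This is what allows \cref{lem:OneStepGD}\ref{item:margin_one_step_edge} to be applied at every step of the block. The remaining care point is the final-block tuning $\delta_{k+1}=Z/(1+s_{k+1})$, which is what produces the linear factor $(1+s_{k+1})^{-1}$.
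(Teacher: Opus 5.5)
Your proposal is correct and follows the paper's proof essentially step for step. It uses the same initial point $r_0\ve_1$ and the same parameters: your recursion for $\delta_i$ is the paper's $\delta_i=2A_i/(2+s_i)$ with $A_{i+1}=b_i\delta_i/4$, together with $c_{i+1}=s_i\delta_i/4$ and the terminal choice $\delta_{k+1}=A_{k+1}/(1+s_{k+1})$. It runs the same gap/checkpoint induction via \cref{lem:OneStepGD} and ends with the identical terminal-block computation. The only cosmetic difference is that you parametrize by the $\delta_i$ rather than the margins $A_i$, and state the block-start invariant as an inequality instead of the equality $z_i(\vx_{t_{i-1}})=A_i$.
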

\begin{proof}[Proof Sketch of~\cref{lem:GeneralLowerBound}]
    Let us take an initial point $\vx_0 \!=\! r_0\ve_1$ ($r_0 > 0$).
    Since $\norm{\vx_0} \!=\! r_0\norm{\ve_1} \!=\! r_0$ and because of \cref{eq:R_N_to_funcval}, it suffices to show that 
    \begin{align}
        F(\vx_N)\ge
        \frac{r_0^2}{4(s_{k+1}+1)} \prod_{i=1}^k\left(\frac{b_i}{2(s_i+2)}\right)^2.
        \label{eq:general_function_value_lower_bound}
    \end{align}
    To this end, we set the parameters $c_i\ge0$ and $\delta_i>0$ recursively.
    Set $c_1=0$ and $A_1 = r_0$. Then, set
    \begin{align*}
        \delta_i = \frac{2A_i}{2 + s_i},~~
        c_{i+1} = \frac{s_i \delta_i}{4},~~
        A_{i+1} = \frac{b_i \delta_i}{4},~~ (\forall i\in[k]); \qquad  
        \delta_{k+1} = \frac{A_{k+1}}{1 + s_{k+1}}.
    \end{align*}
    In particular, we obtain  $A_i = r_0 \prod_{j=1}^{i-1} \frac{b_j}{2(2+s_j)}$ for all $i\in[k+1]$ by unrolling the recursion.
    
    \textbf{Claim.}~~At the beginning of $i$-th block ($\vx_{t_{i-1}}$), the newly defined parameter $A_i$ tracks the $i$-th margin at the checkpoint step (i.e., $z_i(\vx_{t_{i-1}}) = A_i\ge \delta_i$); thus, the $i$-th margin is in nonzero affine region of $H_{\delta_i}$. Moreover, the subsequent components ($j\ge i+1$) of the iterate are zero (i.e., $x_{t_{i-1}}^{(j)}=0$); thus, the subsequent margins are all inactive.
    
    We show the claim by induction on $i\in[k+1]$.
    Since $z_1(\vx_0)=x^{(1)}_0 - x^{(2)}_0 -c_1 = r_0 = A_1$ and $x^{(j)}_0=0$ ($j\ge2$), the claim holds at $i=1$.
    Also, if the claim holds at $i\in [k]$, we apply \cref{lem:OneStepGD} repeatedly to show that the claim also holds at $i+1$.
    This proves the claim up to $i\le k+1$.
    The core idea (roughly depicted in \cref{fig:one_block_dynamics}) is that:
    \begin{itemize}[leftmargin=*,nosep]
        \item (\cref{lem:GapsStayAffine})~~During the gap of the block $i$ ($t = t_{i-1} + 1, \dotsc,t_{i}-1$), the $i$-th margins of iterates remain in its nonzero affine region of $H_{\delta_i}$, while all subsequent margins remain inactive.
        \item (\cref{lem:BigStepActivatesNext})~~The checkpoint step $\eta_{t_i}$ activates $(i+1)$-th margin (i.e., $H_{\delta_{i+1}}$ has a nonzero derivative), thereby passing the same structure to the next block. All coordinate $j\ge i+2$ remain zero.
    \end{itemize}
    \begin{figure}[t]
    \centering
    \resizebox{\linewidth}{!}{\input{fig/one_block_dynamics.tikz}}
    \caption{Block-wise visualization of GD iterates on our hard instance (\cref{eq:hard_instance}). It  illustrates the trajectory of $(x_t^{(i)}, x_t^{(i+1)}, x_t^{(i+2)})$ during the block $i\in [k]$: $t=t_{i-1} \!+\! 1, \dotsc, t_{i} \!-\! 1, t_{i}$. We write the $i$-th margin $z_i=z_i(\vx_t)$ corresponding to $i$-th component function $H_{\delta_i}(\cdot)$. Note that $x_t^{(i+2)} \equiv 0$ until $t \le t_i$.}
    \label{fig:one_block_dynamics}
    \end{figure}
    In particular, we have $z_{k+1}(\vx_{t_k}) \!=\! A_{k+1} \!\ge\! \delta_{k+1}$, so we can apply \cref{lem:OneStepGD}~\ref{item:margin_one_step_lower_bound} for $(k+1)$-th margins.
    With a similar logic, by applying \cref{lem:OneStepGD}~\ref{item:margin_one_step_lower_bound} repeatedly, we can eventually prove that $z_{k+1}(\vx_N) \ge A_{k+1} - \frac{s_{k+1} \delta_{k+1}}{2}$ holds at the final iterate.

    For brevity, let us omit the subscript $k+1$ from now on: $A=A_{k+1}$, $s=s_{k+1}$, $\delta=\delta_{k+1}=A/(1+s)$.
    Observe that $A - \frac{s \delta}{2} = \delta\cdot (1+\frac{s}{2})\ge \delta$ and
    \begin{align*}
        F(\vx_N)
        &\ge \frac{1}{2}H_\delta\open{z_{k+1}(\vx_N)} 
        =\frac{\delta}{2}\, z_{k+1}(\vx_N)-\frac{\delta^2}{4}
        \ge \frac{\delta}{2}\left(A-\frac{s\delta}{2}\right)-\frac{\delta^2}{4} =\frac{A^2}{4(1+s)}.
    \end{align*}
    This proves \cref{eq:general_function_value_lower_bound}.
    See \cref{subsec:general_lowerbound_proof} for the full proof.
\end{proof}

%% file: fig/one_block_dynamics.tikz
\definecolor{smallblue}{RGB}{31,103,181}
\definecolor{selectedred}{RGB}{202,55,52}
\definecolor{boundarygray}{RGB}{78,84,91}
\definecolor{inactivefill}{RGB}{222,230,240}
\definecolor{quadraticfill}{RGB}{250,232,176}
\definecolor{affinefill}{RGB}{216,239,222}

\tikzset{
  region label/.style={
    align=center,
    font=\scriptsize,
    text=black!78,
    rounded corners=1.5pt,
    fill=white,
    fill opacity=0.72,
    text opacity=1,
    inner sep=2.2pt
  }
}

\pgfplotsset{
  one block axis/.style={
    width=6.75cm,
    height=5.35cm,
    xmin=0,
    xmax=5.65,
    ymin=0,
    ymax=4.35,
    axis equal image,
    set layers=standard,
    axis lines=left,
    axis line style={
      black!82,
      line width=0.72pt,
      -{Latex[length=2.0mm,width=1.4mm]}
    },
    ticks=none,
    enlargelimits=false,
    clip=false,
    title style={font=\bfseries, yshift=1.5mm},
    xlabel style={
      at={(axis description cs:0.95,0)},
      anchor=north west
    }
  },
  region fill/.style={
    on layer=axis background,
    draw=none,
    fill opacity=0.72,
    forget plot
},
  boundary plot/.style={
    boundarygray,
    densely dashed,
    line width=0.85pt,
    samples=2,
    forget plot
  },
  small segment/.style={
    smallblue,
    line width=1.45pt,
    -{Stealth[length=1.65mm,width=1.2mm]},
    forget plot
  },
  trajectory points/.style={
    only marks,
    mark=*,
    mark size=2.35pt,
    mark options={fill=smallblue, draw=white, line width=0.55pt},
    forget plot
  },
  selected trajectory/.style={
    selectedred,
    line width=1.85pt,
    -{Stealth[length=2.3mm,width=1.55mm]},
    forget plot
  },
  selected point/.style={
    only marks,
    mark=*,
    mark size=2.75pt,
    mark options={fill=selectedred, draw=white, line width=0.6pt},
    forget plot
  },
  first point/.style={
    only marks,
    mark=*,
    mark size=2.65pt,
    mark options={fill=black!88, draw=white, line width=0.6pt},
    forget plot
  }
}

\begin{tikzpicture}[font=\small]
\begin{groupplot}[
  group style={group size=2 by 1, horizontal sep=1.65cm},
  one block axis
]

  \nextgroupplot[
    title={(a) Coordinates $i$ and $i+1$},
    xlabel={$x_t^{(i)}$}
  ]

  \node[anchor=east]
    at (axis description cs:-0.015,0.985) {$x_t^{(i+1)}$};

  \addplot[region fill, fill=inactivefill]
    coordinates {(0,0) (1.45,0) (5.65,4.20) (5.65,4.35)
                 (0,4.35) (0,0)};
  \addplot[region fill, fill=quadraticfill]
    coordinates {(1.45,0) (2.55,0) (5.65,3.10) (5.65,4.20)
                 (1.45,0)};
  \addplot[region fill, fill=affinefill]
    coordinates {(2.55,0) (5.65,0) (5.65,3.10) (2.55,0)};

  \addplot[boundary plot, domain=1.45:5.65] {x-1.45}
    node[pos=0.82, sloped, above, font=\scriptsize, text=boundarygray]
      {$z_i=0$};
  \addplot[boundary plot, domain=2.55:5.65] {x-2.55}
    node[pos=0.80, sloped, below, font=\scriptsize, text=boundarygray]
      {$z_i=\delta_i$};

  \node[region label] at (axis cs:1.18,3.18)
    {\textbf{Inactive}\\[-1pt]$z_i\leq 0$};
  \node[region label] at (axis cs:2.85,1.00)
    {\textbf{Quadratic}\\[-1pt]$0<z_i<\delta_i$};
  \node[region label] at (axis cs:5.00,1.20)
    {\textbf{Affine}\\[-1pt]$z_i\geq\delta_i$};

  \addplot[small segment]
    coordinates {(4.8500,0.0000) (4.5733,0.3933)};
  \addplot[small segment]
    coordinates {(4.5733,0.3933) (4.2967,0.7867)};
  \addplot[small segment]
    coordinates {(4.2967,0.7867) (4.0200,1.1800)};
  \addplot[trajectory points]
    coordinates {(4.8500,0.0000) (4.5733,0.3933)
                 (4.2967,0.7867) (4.0200,1.1800)};
  \addplot[selected trajectory]
    coordinates {(4.02,1.18) (2.8805,2.80)};
  \addplot[selected point] coordinates {(2.8805,2.80)};
  \addplot[first point] coordinates {(4.85,0)};

  \nextgroupplot[
    title={(b) Coordinates $i+1$ and $i+2$},
    xlabel={$x_t^{(i+1)}$}
  ]

  \node[anchor=east]
    at (axis description cs:-0.015,0.985) {$x_t^{(i+2)}$};

  \addplot[region fill, fill=inactivefill]
    coordinates {(0,0) (1.18,0) (5.53,4.35) (0,4.35) (0,0)};
  \addplot[region fill, fill=quadraticfill]
    coordinates {(1.18,0) (2.13,0) (5.65,3.52)
                 (5.65,4.35) (5.53,4.35) (1.18,0)};
  \addplot[region fill, fill=affinefill]
    coordinates {(2.13,0) (5.65,0) (5.65,3.52) (2.13,0)};

  \addplot[boundary plot, domain=1.18:5.53] {x-1.18}
    node[pos=0.74, sloped, above, font=\scriptsize, text=boundarygray]
      {$z_{i+1}=0$};
  \addplot[boundary plot, domain=2.13:5.65] {x-2.13}
    node[pos=0.70, sloped, below, font=\scriptsize, text=boundarygray]
      {$z_{i+1}=\delta_{i+1}$};

  \node[region label] at (axis cs:1.30,3.03)
    {\textbf{Inactive}\\[-1pt]$z_{i+1}\leq 0$};
  \node[region label] at (axis cs:2.86,1.42)
    {\textbf{Quadratic}\\[-1pt]$0<z_{i+1}<\delta_{i+1}$};
  \node[region label] at (axis cs:4.55,0.52)
    {\textbf{Affine}\\[-1pt]$z_{i+1}\geq\delta_{i+1}$};

  \addplot[small segment] coordinates {(0.0000,0) (0.3933,0)};
  \addplot[small segment] coordinates {(0.3933,0) (0.7867,0)};
  \addplot[small segment] coordinates {(0.7867,0) (1.1800,0)};
  \addplot[trajectory points]
    coordinates {(0.0000,0) (0.3933,0) (0.7867,0) (1.1800,0)};
  \addplot[selected trajectory]
    coordinates {(1.18,0) (2.80,0)};
  \addplot[selected point] coordinates {(2.80,0)};
  \addplot[first point] coordinates {(0,0)};

\end{groupplot}

\coordinate (legend center) at
  ([yshift=-9.5mm]$(group c1r1.south)!0.5!(group c2r1.south)$);
\matrix[
  matrix of nodes,
  anchor=center,
  column sep=2.8mm,
  row sep=0pt,
  nodes={anchor=base, inner sep=0pt, outer sep=0pt, font=\scriptsize}
] (trajectory-legend) at (legend center) {
  \hspace{3.0mm}$t=t_{i-1}$ &
  \hspace{3.0mm}$t_{i-1}<t<t_i$ &
  \hspace{3.0mm}$t=t_i$ &
  \hspace{9.0mm}updates in the $i$-th gap &
  \hspace{9.0mm}$i$-th checkpoint update \\
};

\fill[black!88, draw=white, line width=0.5pt]
  ([xshift=1.35mm]trajectory-legend-1-1.west) circle (2.2pt);
\fill[smallblue, draw=white, line width=0.5pt]
  ([xshift=1.35mm]trajectory-legend-1-2.west) circle (2.2pt);
\fill[selectedred, draw=white, line width=0.5pt]
  ([xshift=1.35mm]trajectory-legend-1-3.west) circle (2.2pt);
\draw[smallblue, line width=1.45pt,
      -{Stealth[length=1.65mm,width=1.2mm]}]
  ([xshift=0.7mm]trajectory-legend-1-4.west) --
  ([xshift=7.8mm]trajectory-legend-1-4.west);
\draw[selectedred, line width=1.8pt,
      -{Stealth[length=2.0mm,width=1.35mm]}]
  ([xshift=0.7mm]trajectory-legend-1-5.west) --
  ([xshift=7.8mm]trajectory-legend-1-5.west);

\end{tikzpicture}

%% file: sec/003NonAnytime.tex
\section{Stronger Non-Anytime Lower Bound}
\label{sec:nonanytime}

In this section, we sketch the proof of \cref{thm:NonAnytimeLowerBound}, our lower bound on the \textit{non-anytime} GD convergence rate.
We first observe that the lower bound proved in \cref{lem:GeneralLowerBound} holds for any choice of checkpoint timesteps $T=\{t_1<\dotsb<t_k\}\subseteq[N]$ with $k=\abs{T}$.
Hence, to obtain the best (i.e., largest) possible convergence lower bound that applies to all positive step-size schedules $\veta$, we want to \uline{(i) optimize the right-hand side of \mbox{\cref{eq:general_normalized_lower_bound}} in terms of $T\subset[N]$ and (ii) apply the worst-case $\veta$.}
Define
\begin{align}
    \gP(T;\veta):=\prod_{i=1}^{\abs{T}}\frac{b_i(T;\veta)}{2\open{2+s_i(T;\veta)}},\qquad \gP(\emptyset;\veta):=1.
    \label{eq:all_selection_product}
\end{align}
Recall that it equals $A_{k+1}$ defined in the proof of \cref{lem:GeneralLowerBound} with $r_0=1$. Then, \cref{lem:GeneralLowerBound} implies that
\begin{align}
    4 \gR_N(\veta) \ge
    \gB(\veta):=\max_{T\subseteq[N]}\frac{\gP(T;\veta)^2}{1+s_{\abs{T}+1}(T;\veta )}.
    \label{eq:all_selection_objective}
\end{align}
The maximum is attained as it is over the $2^N$ selections.
However, directly maximizing this ratio is difficult.
Our trick is to turn this complicated maximization over all $T\subseteq[N]$ into a \emph{recursively decomposable} scalar problem.
To this end, we define a \textbf{cost} function $\Psi_\lambda$ and its minimum $\gV_\lambda$ in $T$:
\begin{align}
    \Psi_\lambda(T;\veta)
    := \frac{\lambda + s_{\abs{T}+1}(T;\veta)}{\gP(T;\veta)}, \qquad
    \gV_\lambda(\veta)
    = \min_{T\subseteq [N]}  \Psi_\lambda(T;\veta). \label{eq:cost_and_minimum_cost}
\end{align}
Here, we introduce a new parameter $\lambda\ge 2$, which we determine later.
For the empty schedule $\emptyset$ (of length zero), we simply set $\mathcal V_\lambda(\emptyset)=\lambda$.
Then, using the fact that $\frac{1}{1+s} = \max_{\lambda \ge 2} \frac{4(\lambda-1)}{(\lambda + s)^2}$ ($s\ge 0$), we can rewrite the bound in \cref{eq:all_selection_objective} as
\begin{align}
    \gB(\veta) = \max_{\lambda\ge 2} \frac{4(\lambda - 1)}{\gV_\lambda (\veta)^2}. \label{eq:all_selection_objective_reformulated}
\end{align}
See \cref{subsec:TerminalParameterIdentity} for a detailed derivation of it.
Now, it suffices to study the minimum cost $\gV_\lambda(\veta)$. Fortunately, it admits the following exact recursive binary decomposition.
\begin{restatable}[Exact recursive decomposition of $\gV_\lambda(\veta)$]{lemma}{lemExactCostRecursion}
\label{lem:ExactCostRecursion}
    For $\veta_{1:n} = (\eta_t)_{t\in[n]}\in (0,\infty)^n$ and $\lambda\ge2$,
    \begin{align}
        \mathcal V_\lambda(\veta_{1:n})
        =
        \min\left\{
            \lambda+\sum_{t=1}^{n}\eta_t,\;
            \min_{1\le t\le n}\frac{2}{\eta_t}\,
            \mathcal V_2(\veta_{1:t-1})\,
            \mathcal V_\lambda(\veta_{t+1:n})
        \right\}.
        \label{eq:exact_cost_recursion}
    \end{align}
\end{restatable}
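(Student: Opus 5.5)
The plan is to prove the identity by splitting the minimization over $T\subseteq[n]$ according to whether $T$ is empty, and if not, according to its \emph{first} checkpoint $t_1=t$. If $T=\emptyset$, then $\gP(\emptyset;\veta)=1$ and $s_1=\sum_{t=1}^n\eta_t$, so $\Psi_\lambda(\emptyset;\veta_{1:n})=\lambda+\sum_{t=1}^n\eta_t$. This accounts for the first term of \cref{eq:exact_cost_recursion}.

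For nonempty $T$, write $T=\{t\}\cup T'$ with $T'\subseteq\{t+1,\dots,n\}$, and shift $T'$ by $t$ to obtain a checkpoint set $\tilde T'$ for the suffix schedule $\veta_{t+1:n}$. The first block of $T$ has gap $s_1=\sum_{\tau<t}\eta_\tau$ and checkpoint $b_1=\eta_t$. The remaining gaps and checkpoints of $T$ are exactly those of $\tilde T'$ on $\veta_{t+1:n}$, including the terminal gap. Indeed, $t_0'=0$ for the suffix corresponds to $t$, and $t_{k+1}=n+1$ shifts correctly. Hence
\begin{align*}
\Psi_\lambda(T;\veta_{1:n})=\frac{2\,(2+s_1)}{\eta_t}\cdot\Psi_\lambda(\tilde T';\veta_{t+1:n}).
\end{align*}
The map $T'\mapsto\tilde T'$ is a bijection onto all subsets of $[n-t]$, including the empty one. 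When $t=n$ the suffix is the empty schedule, and the convention $\gV_\lambda(\emptyset)=\lambda$ matches $\Psi_\lambda(\emptyset;\emptyset)=\lambda+0$. Minimizing over $T'$ for fixed $t$ therefore gives $\frac{2(2+s_1)}{\eta_t}\gV_\lambda(\veta_{t+1:n})$.

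It remains to identify $2+\sum_{\tau<t}\eta_\tau$ with $\gV_2(\veta_{1:t-1})$. This is the step I expect to be the real content. The claim is that for $\lambda=2$ the empty selection is always optimal, i.e. $\Psi_2(T;\veta)\ge 2+\sum_\tau\eta_\tau$ for every $T$. I would prove this by strong induction on the length $m$, using the decomposition just established. The inductive hypothesis gives $\gV_2(\veta_{1:t-1})=2+S_{<t}$ and $\gV_2(\veta_{t+1:m})=2+S_{>t}$, where $S$ denotes the corresponding partial sums. So it suffices to show
\begin{align*}
\frac{2}{\eta_t}(2+S_{<t})(2+S_{>t})\ge 2+S_{<t}+\eta_t+S_{>t}.
\end{align*}
Set $a=2+S_{<t}\ge2$, $c=2+S_{>t}\ge2$, and $\eta=\eta_t$. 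The inequality becomes $2ac\ge\eta(a+c+\eta-2)$. This is false for very large $\eta$, so the claim cannot hold in general.

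That failure means I need to reconsider. The correct reading is that the lemma's $\gV_2(\veta_{1:t-1})$ must come from minimizing $(2+s_1)$ over the choices \emph{inside} the prefix. To get it, I should instead split on the \emph{last} checkpoint before some index, or equivalently split by an arbitrary checkpoint $t\in T$ and minimize the prefix part and the suffix part independently. The product $\gP$ and the $(2+s_i)$ factors factorize across the chosen checkpoint $t$. The prefix contributes $\prod_{i<j}\frac{2(2+s_i)}{b_i}\cdot(2+s_j)$, where $s_j$ is the gap just before $t$. This is exactly $\Psi_2(T\cap[t-1];\veta_{1:t-1})$, since its "terminal gap" is $s_j$ with $\lambda=2$.

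So the cleaner decomposition is the following. For nonempty $T$, pick any element $t\in T$; the first checkpoint is the natural choice. Then
\begin{align*}
\Psi_\lambda(T;\veta_{1:n})=\frac{2}{\eta_t}\,\Psi_2(T\cap[t-1];\veta_{1:t-1})\,\Psi_\lambda(\widetilde{T\cap[t+1,n]};\veta_{t+1:n}).
\end{align*}
With this factorization the minimization over $T$ with a specified element $t$ separates into independent minimizations over the prefix and the suffix. This gives $\ge$ directly. For $\le$, the union of optimal prefix and suffix sets together with $t$ is a valid $T$, so the bound is attained.

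Taking the minimum over $t$ and comparing with the empty set proves \cref{eq:exact_cost_recursion}. The only care needed is verifying the factorization bookkeeping and the empty-schedule conventions at $t=1$ and $t=n$. At $t=1$, $\Psi_2(\emptyset;\emptyset)=2$ gives $\gV_2=2$, consistent with $2+s_1$ when $s_1=0$.
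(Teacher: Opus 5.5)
Your final argument is correct and is essentially the paper's proof: split a nonempty $T$ at an element $t\in T$, factorize $\Psi_\lambda(T;\veta_{1:n})=\frac{2}{\eta_t}\Psi_2(T\cap[t-1];\veta_{1:t-1})\,\Psi_\lambda(\cdot;\veta_{t+1:n})$, minimize the two factors independently over all $T\ni t$, and get attainment by joining optimal prefix and suffix sets with $t$; the empty-schedule conventions at $t=1$ and $t=n$ are also handled as in the paper. Discarding the first attempt was right, since the empty set is not always optimal for $\lambda=2$. Note that the minimization must range over all $T$ containing $t$, not only over $T$ whose first checkpoint is $t$; your attainment step already does this correctly.
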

\begin{proof}[Proof idea for \cref{lem:ExactCostRecursion}]
    The complete proof appears in \cref{subsec:proof_cost_recursion}.
    The idea is to factorize $\Psi_\lambda(T;\veta_{1:n})$ as
    \begin{align}
        \Psi_\lambda(T;\veta_{1:n})
        =\frac{2}{\eta_t}\cdot
        \Psi_2(T_L;\veta_{1:t-1})
        \cdot
        \Psi_\lambda(T_R;\veta_{t+1:n}),
        \label{eq:cost_factorization_main}
    \end{align}
    by dividing $T\!=\!T_{\mathrm{left}} \cup \{t\} \cup T_{\mathrm{right}}$ for $T_{\mathrm{left}} \!\subseteq [t\!-\!1]$ and $T_{\mathrm{right}} \!\subseteq [n]\setminus[t]$ for $t\!\in\! [n]$, when $T\!\neq\! \emptyset$.
\end{proof}

Next, since we want to obtain the convergence lower bound that applies to all positive schedules $\veta\in (0,\infty)^n$ of any length $n\ge 0$, define the \textbf{worst-case minimum cost} $\gU_n(\lambda)$ over all such $\veta$ by
\begin{align}
    \gU_n(\lambda)
    :=\sup_{\veta\in(0,\infty)^n}\gV_\lambda(\veta),\qquad
    \gU_0(\lambda):=\lambda. \qquad
    (\lambda \ge 2)
    \label{eq:worst_case_cost_definition}
\end{align}
We take a supremum because our lower bound $\gB(\veta)$ is inversely proportional to $\gV_\lambda(\veta)^2$: see \cref{eq:all_selection_objective_reformulated}.
We also define a handy map $\gQ:[2,\infty)^2\to[4,\infty)$ by
\begin{align*}
    \gQ(x,y):=\frac{x+y-2+\sqrt{(x+y-2)^2+8xy}}{2}.
\end{align*}
Equivalently, $w=\gQ(x,y)$ is the unique positive solution of $w(w-x-y+2)=2xy$.
Then, we obtain the following (recursive) bound on $\gU_n(\lambda)$, which is the key technical difficulty of this paper.
\begin{restatable}{lemma}{lemWorstCaseCostBound}
    \label{lem:WorstCaseCostBound}
    For every $n\in \sN$ and $\lambda\ge 2$, the worst-case minimum cost $\gU_n(\lambda)$ (\cref{eq:worst_case_cost_definition}) satisfies
    \begin{align}
        \gU_n(\lambda)
        \le
        \max_{\substack{i,j \ge 0\\i+j = n-1}}
        \gQ \open{\gU_i(2),\gU_j(\lambda)}.
        \label{eq:recursive_worst_case_cost_bound}
    \end{align}
    Furthermore, let $\alpha_\star :=\log_2(1+\sqrt{3})$ and $\nu_\star = 1/\alpha_\star$.
    Then, for all $n\ge 0$, $\lambda\ge 2$,
    \begin{align}
        \gU_n(\lambda)^{\nu_\star}\le \lambda^{\nu_\star}+2^{\nu_\star}n.
        \label{eq:uniform_worst_case_cost_bound}
    \end{align}
\end{restatable}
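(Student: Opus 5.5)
The plan has two parts: first the recursive inequality \cref{eq:recursive_worst_case_cost_bound} via \cref{lem:ExactCostRecursion}, then the closed-form bound \cref{eq:uniform_worst_case_cost_bound} by induction on $n$.

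\textbf{Recursive bound.} Fix $\veta\in(0,\infty)^n$ and write $w:=\max_{i+j=n-1}\gQ(\gU_i(2),\gU_j(\lambda))$. We must show $\gV_\lambda(\veta)\le w$. Suppose $\lambda+S>w$, where $S=\sum_t\eta_t$; otherwise the first option in \cref{eq:exact_cost_recursion} already gives the claim. For any split index $t$, put $p_t:=2+\sum_{\tau<t}\eta_\tau$ and $q_t:=\lambda+\sum_{\tau>t}\eta_\tau$. Taking $T=\emptyset$ in each factor gives
\[
\gV_2(\veta_{1:t-1})\le\min\{p_t,x_t\},\qquad \gV_\lambda(\veta_{t+1:n})\le\min\{q_t,y_t\},
\]
with $x_t:=\gU_{t-1}(2)$ and $y_t:=\gU_{n-t}(\lambda)$. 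Moreover $\eta_t=S+2+\lambda-p_t-q_t> w+2-p_t-q_t$.

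So the candidate $t$ in \cref{eq:exact_cost_recursion} costs at most $2\tilde p\tilde q/(w+2-p_t-q_t)$, where $\tilde p=\min\{p_t,x_t\}$ and $\tilde q=\min\{q_t,y_t\}$. Since $w(w-x-y+2)=2xy$ defines $\gQ$ and $\gQ$ is increasing in each argument, this cost is $\le w$ whenever $p_t\le x_t$ and $q_t\le y_t$. In that case $2p_tq_t\le w(w-p_t-q_t+2)$ follows from $p_t\le x_t$, $q_t\le y_t$, $\gQ(x_t,y_t)\le w$ and monotonicity of $(p,q)\mapsto 2pq+w(p+q)$.

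The main obstacle is choosing $t$ so that \emph{both} conditions hold, or so that the cap $\min$ absorbs a violated one. I would take $t$ as the first index where the prefix condition fails at $t+1$, i.e.\ the first $t$ with $2+\sum_{\tau\le t}\eta_\tau>\gU_t(2)$. Then $p_t\le x_t$ by construction. For the right side, the violated prefix inequality together with $\gU$'s growth and $\lambda+S>w$ should force $q_t$ small relative to $y_t$. Failing that, I would run the symmetric argument from the right and combine the two choices, using only the monotonicity of $\gQ$ and the identity $w(w-x-y+2)=2xy$.

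\textbf{Power bound.} Argue by induction on $n$; the case $n=0$ is trivial. By \cref{eq:recursive_worst_case_cost_bound}, the induction hypothesis, and the monotonicity of $\gQ$, it suffices to prove
\[
\gQ(x,y)^{\nu_\star}\le x^{\nu_\star}+y^{\nu_\star}\qquad(x,y\ge2).
\]
Indeed $x^{\nu_\star}\le 2^{\nu_\star}(i+1)$ and $y^{\nu_\star}\le\lambda^{\nu_\star}+2^{\nu_\star}j$, so the right side is at most $\lambda^{\nu_\star}+2^{\nu_\star}n$.

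Set $W:=(x^{\nu_\star}+y^{\nu_\star})^{1/\nu_\star}\ge\max(x,y)$. Since $w\mapsto w(w-x-y+2)$ is increasing beyond $\gQ$'s root, it suffices to show $W(W-x-y+2)\ge 2xy$. Dropping the helpful $+2$ (harmless because $W>0$), it suffices to prove the homogeneous inequality $W(W-x-y)\ge2xy$. Normalize $y=1$ and $x=u\ge1$ by symmetry. This becomes a one-variable inequality $g(u)\ge0$.

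At $u=1$, $W=2^{1/\nu_\star}=1+\sqrt3$, and $(1+\sqrt3)(\sqrt3-1)=2$ gives equality. This is exactly why $\alpha_\star=\log_2(1+\sqrt3)$. I would then verify $g(u)\ge0$ on $[1,\infty)$ by calculus: check $g'(1)\ge0$ and the behaviour as $u\to\infty$, where $W\approx u+\nu_\star^{-1}u^{1-\nu_\star}$ dominates $u+1+2$ terms. Since the case $x=y=2$ is tight only in the homogeneous limit, I expect this step to be routine compared with choosing the split in part one.
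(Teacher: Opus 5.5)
\textbf{The recursive bound has a genuine gap.} Your induction for the power bound is fine in structure. The recursive bound is different: the problem is not just how to choose $t$, but that the bounding scheme itself is too weak. You bound the split-$t$ candidate by $2\min\{p_t,x_t\}\min\{q_t,y_t\}/\eta_t$ and the empty set by $\lambda+S$.

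Take $n=2$, $\lambda=2$, $\veta=(2.1,2.4)$. Then $\gU_0(2)=2$, $\gU_1(2)=\gQ(2,2)=4$, $w=\gQ(2,4)=2+2\sqrt5\approx 6.472$, and $\lambda+S=6.5>w$.
\begin{itemize}
\item For $t=1$ your bound is $2\cdot 2\cdot\min\{4.4,4\}/2.1\approx 7.62$.
\item For $t=2$ it is $2\cdot\min\{4.1,4\}\cdot 2/2.4\approx 6.67$.
\end{itemize}
Both exceed $w$. So no choice of $t$ closes the argument: not the first failing prefix, not the symmetric choice from the right, and not any combination. Your expectation that $q_t$ is forced below $y_t$ already fails at the first failing index $t=1$, where $q_1=4.4>y_1=4$.

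The lemma does hold here: $\gV_2(\veta)\approx 6.35$. But it is attained only by $T=\{1,2\}$, i.e.\ by using that the child value $\gV_2((2.4))=8/2.4$ lies strictly below both of your caps. A pointwise argument would have to carry finer information about the children, for example a joint bound on $\gV$ in terms of length and step-sum. Your proposal does not supply this.

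\textbf{The missing idea is to argue only at a worst-case schedule.} The paper takes a maximizer $\veta^\star$ of $\gU_n(\lambda)$ (\cref{lem:ExtremalSchedule}). Log-submodularity (\cref{lem:CostLogSubmodularity}) makes the family of tight checkpoint sets closed under $\cap$ and $\cup$, with no two disjoint nonempty members. Perturbation arguments then show that $\emptyset$ is tight and that the inclusion-minimal nonempty tight set is a singleton $\{\tau\}$. Tightness of both gives $w=x+y+\mu-2=2xy/\mu$, hence $w=\gQ(x,y)$ exactly. The chain $w\le\frac{2}{\mu}\gV_2(\veta^\star_{1:\tau-1})\gV_\lambda(\veta^\star_{\tau+1:n})\le\frac{2xy}{\mu}=w$ then forces the children's optimal costs to equal their empty-set costs. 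Only at such an extremal schedule do the caps $x\le\gU_{\tau-1}(2)$ and $y\le\gU_{n-\tau}(\lambda)$ lose nothing, and that is exactly what fails in the example above.

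\textbf{Your verification of the power inequality is incomplete.} Your reduction to $W(W-x-y)\ge 2xy$, with $W=(x^{\nu_\star}+y^{\nu_\star})^{1/\nu_\star}$, is equivalent to the paper's inequality $\gG(u)\le 1$, and the induction built on it is correct. The issue is the proposed check:
\begin{itemize}
\item By symmetry and $2$-homogeneity, $g(1/u)=u^{-2}g(u)$. This forces $g'(1)=g(1)=0$, so checking $g'(1)\ge 0$ gives no information.
\item In normalized form the inequality is also tight as $u\to\infty$, since $\gG(0)=\gG(1/2)=1$. Behaviour at the endpoints therefore says nothing about the interior.
\end{itemize}
You need a global argument. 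The paper's works: strict concavity of $\varphi$ shows that $\gG$ decreases and then increases on $[0,1/2]$, so its maximum is attained at an endpoint, where it equals $1$.
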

\begin{proof}[Proof sketch of \cref{lem:WorstCaseCostBound}]
    First, the supremum $\gU_n(\lambda)$ is attained and finite (\cref{lem:ExtremalSchedule}). Choose a maximizer $\veta^\star$ of $\gU_n(\lambda)$ and write $w:=\gV_\lambda(\veta^\star)$. Let
    \[
        \gM:=\{T\subseteq[n]:\Psi_\lambda(T;\veta^\star)=w\}
    \]
    be the family of checkpoint sets $T$ that attains the min-cost particularly for $\veta^\star$.
    Since the cost function $\Psi_\lambda(T;\veta)$ is log-submodular (\cref{lem:CostLogSubmodularity}), $\gM$ is closed under union and intersection, and two nonempty elements of $\gM$ cannot be disjoint.
    Perturbation arguments show that $\emptyset\in\gM$ and that an inclusion-minimal nonempty member must be a singleton $\{\tau\}$.
    Now, set
    \[
        x:=2+\sum_{t<\tau}\eta_t^\star,\qquad
        y:=\lambda+\sum_{t>\tau}\eta_t^\star,\qquad
        \mu:=\eta_\tau^\star.
    \]
    Since both $\emptyset$ and $\{\tau\}$ are both members of $\gM$, by comparing $\Psi_\lambda(\emptyset;\veta^\star)$ and $\Psi_\lambda(\{\tau\};\veta^\star)$, we have
    \[
        w=x+y+\mu-2=\frac{2xy}{\mu},
    \]
    and hence $w=\gQ(x,y)$. 
    Applying \cref{lem:ExactCostRecursion} at $t=\tau$ and the fact that $\gV_\lambda$ is bounded above by the cost of the empty checkpoint set, we have
    \[
        x=\gV_2(\veta_{1:\tau-1}^\star),\qquad
        y=\gV_\lambda(\veta_{\tau+1:n}^\star).
    \]
    Therefore $x\le\gU_{\tau-1}(2)$ and $y\le\gU_{n-\tau}(\lambda)$; monotonicity of $\gQ$ then yields \cref{eq:recursive_worst_case_cost_bound}.

    Note that the map $\gQ$ satisfies $\gQ(x,y)^{\nu_\star} \!<\! x^{\nu_\star} \!+\! y^{\nu_\star}$ for any $x,y\ge 2$ (\cref{lem:QPowerSubadditivity}).
    Then, strong induction over $n\ge 0$ from $\gU_0(\lambda)=\lambda$ gives \cref{eq:uniform_worst_case_cost_bound}:
    \begin{align*}
        \gU_n(\lambda)^{\nu_\star}
        \le \max_{i+j=n-1}\left(\gU_i(2)^{\nu_\star}+\gU_j(\lambda)^{\nu_\star}\right)
        \le \max_{i+j=n-1}\left(2^{\nu_\star}+2^{\nu_\star}i+\lambda^{\nu_\star}+2^{\nu_\star}j\right)
        =\lambda^{\nu_\star}+2^{\nu_\star}n.
    \end{align*}
    See \cref{subsec:proof_worst_case_cost} for the full proof.
\end{proof}

Finally, using the preceding key lemma, we prove our first main theorem, which implies \cref{thm:NonAnytimeLowerBound}.
\begin{restatable}[Non-anytime lower bound]{theorem}{thmNonAnytimeLowerBoundFull}
\label{thm:NonAnytimeLowerBoundFull}
    Fix any $N\in \sN$. For any $N$-step schedule $\veta\in (0,\infty)^N$,
    \begin{align*}
        \mathcal R_N(\veta)\ge\frac{2N^{\alpha_\star}-1}{4(2N)^{2\alpha_\star}} =\Omega(N^{-\alpha_\star}), \qquad \text{where } \alpha_\star = \log_2 (1+\sqrt{3}).
    \end{align*}
\end{restatable}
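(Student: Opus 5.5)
We combine the three preceding ingredients: \cref{eq:all_selection_objective}, the reformulation \cref{eq:all_selection_objective_reformulated}, and the uniform bound \cref{eq:uniform_worst_case_cost_bound} of \cref{lem:WorstCaseCostBound}. For the given $\veta\in(0,\infty)^N$ and any $\lambda\ge2$ we have $\gV_\lambda(\veta)\le\gU_N(\lambda)$ by definition of the supremum. Hence
\begin{align*}
    4\gR_N(\veta)\ \ge\ \gB(\veta)\ \ge\ \frac{4(\lambda-1)}{\gV_\lambda(\veta)^2}\ \ge\ \frac{4(\lambda-1)}{\gU_N(\lambda)^2}\ \ge\ \frac{4(\lambda-1)}{\open{\lambda^{\nu_\star}+2^{\nu_\star}N}^{2\alpha_\star}},
\end{align*}
using $\nu_\star=1/\alpha_\star$ in the last step. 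Dividing by $4$ gives $\gR_N(\veta)\ge (\lambda-1)\open{\lambda^{\nu_\star}+2^{\nu_\star}N}^{-2\alpha_\star}$ for every $\lambda\ge2$. What remains is to pick a convenient $\lambda$.

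The stated constant suggests equalizing the two terms inside the parentheses: take $\lambda^{\nu_\star}=2^{\nu_\star}N$, i.e.\ $\lambda=2N^{\alpha_\star}$. This is admissible since $N\ge1$ gives $\lambda\ge2$. Then
\begin{align*}
    \lambda^{\nu_\star}+2^{\nu_\star}N=2\cdot 2^{\nu_\star}N,
    \qquad
    \open{2^{1+\nu_\star}N}^{2\alpha_\star}=2^{2\alpha_\star+2}N^{2\alpha_\star}=4(2N)^{2\alpha_\star}.
\end{align*}
Substituting yields exactly $\gR_N(\veta)\ge \frac{2N^{\alpha_\star}-1}{4(2N)^{2\alpha_\star}}$.

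Finally, $2N^{\alpha_\star}-1\ge N^{\alpha_\star}$ for $N\ge1$, so the right-hand side is at least $\frac{N^{-\alpha_\star}}{4\cdot 2^{2\alpha_\star}}=\Omega(N^{-\alpha_\star})$.

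There is no genuine obstacle here, since all the work sits in \cref{lem:WorstCaseCostBound}. The only points needing care are:
\begin{itemize}
    \item the second inequality in the chain uses only one $\lambda$ from the maximum in \cref{eq:all_selection_objective_reformulated}, which is legitimate because $\gB(\veta)$ is at least the value at any particular $\lambda\ge2$;
    \item the chosen $\lambda=2N^{\alpha_\star}$ must satisfy $\lambda\ge2$, which holds because $N\ge1$;
    \item the exponent arithmetic $2^{2\alpha_\star(1+\nu_\star)}=2^{2\alpha_\star+2}$ should be checked.
\end{itemize}
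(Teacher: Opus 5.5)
Your proposal is correct and follows the paper's proof essentially verbatim. You bound $\gV_\lambda(\veta)\le\gU_N(\lambda)\le(\lambda^{\nu_\star}+2^{\nu_\star}N)^{\alpha_\star}$ via \cref{lem:WorstCaseCostBound}, choose $\lambda_N=2N^{\alpha_\star}$ so that $\gV_{\lambda_N}(\veta)\le 2(2N)^{\alpha_\star}$, plug into \cref{eq:all_selection_objective,eq:all_selection_objective_reformulated}, and finish with $2N^{\alpha_\star}-1\ge N^{\alpha_\star}$, exactly as the paper does.
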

\begin{proof}[Proof of \cref{thm:NonAnytimeLowerBoundFull}]
    Recall that $\nu_\star = 1/\alpha_\star$.
    By \cref{lem:WorstCaseCostBound}, for any $\lambda\ge2$,
    \begin{align*}
        \mathcal V_\lambda(\veta)\le\mathcal U_N(\lambda)\le\left(\lambda^{\nu_\star}+2^{\nu_\star}N\right)^{\alpha_\star}.
    \end{align*}
    Choose $\lambda_N:=2N^{\alpha_\star}\ (\ge2)$. Since $\lambda_N^{\nu_\star}=2^{\nu_\star}N$ and $\nu_\star \alpha_\star = 1$,
    \begin{align*}
        \mathcal V_{\lambda_N}(\veta)&\le\left(2^{\nu_\star}(2N)\right)^{\alpha_\star}=2(2N)^{\alpha_\star}
    \end{align*}
    Therefore, combining \cref{eq:all_selection_objective,eq:all_selection_objective_reformulated} and applying $2N^{\alpha_\star} - 1 \ge N^{\alpha_\star}$ ($\because N^{\alpha_\star}\ge 1$), we have
    \begin{align*}
        \gR_N(\veta)
        \ge \frac14 \gB(\veta)
        \ge \frac{\lambda_N-1}{\mathcal V_{\lambda_N}(\veta)^2}
        \ge \frac{2N^{\alpha_\star}-1}{4(2N)^{2\alpha_\star}}
        \ge 2^{-(2\alpha_\star+2)}N^{-\alpha_\star}.
    \end{align*}
\end{proof}

\paragraph{Why $\alpha_\star=\log_2(1+\sqrt{3})$?}
A nontrivial part of the proof is where the exponent $\alpha_\star$ appears for the first time.
It is relevant to a property of the map $\gQ$ (\cref{lem:QPowerSubadditivity}): $\gQ(x,y)^{\nu_\star} \!<\! x^{\nu_\star} \!+\! y^{\nu_\star}$ for any $x,y\!\ge\! 2$ and  $\nu_\star=1/\alpha_\star$.
The proof (sketch) of \cref{lem:WorstCaseCostBound} exploits this fact to convert a complicated recursion in \cref{eq:recursive_worst_case_cost_bound} into a subadditivity after taking power $\nu_\star$, i.e., $\gU_n (\lambda)^{\nu_\star} \le \gU_i(2)^{\nu_\star}+\gU_j(\lambda)^{\nu_\star}$ for some $i,j \ge 0$ such that $i+j = n-1$.
Now, let us provide a hand-wavy derivation of the exponent: we want to obtain the largest exponent $\nu$ satisfying $\gQ(x,y)^{\nu} \!\le\! x^{\nu} \!+\! y^{\nu}$.
When $x=y$ and for large enough $x$, observe that $\gQ(x,x) = x-1 + \sqrt{3x^2 - 2x + 1} \le (1+\sqrt{3}) x$, and we hope for an inequality $\gQ(x,x)^\nu \le 2x^\nu$.
Hence, we need $(1+\sqrt{3})^\nu \le 2$.
The largest possible exponent is thus $\nu_\star = \log_{(1+\sqrt{3})} 2 = 1/\alpha_\star$.
The rigorous proof of \cref{lem:QPowerSubadditivity} can be found in the appendix.

%% file: sec/004Anytime.tex
\section{Stronger Anytime Lower Bound}
\label{sec:anytime}

So far, we have considered an arbitrarily fixed number of iterations $N$; we could take the worst-case schedule $\veta$ for each $N$.
We now turn to the \textit{anytime} setup, and sketch the proof of \cref{thm:AnytimeLowerBound}.
Let us first fix an infinite positive schedule $\veta=(\eta_t)_{t\ge 1} \in (0,\infty)^\sN$ and study its every finite prefix $\veta_{1:n}$.
So the prefixes are coupled across the horizon $n\in \sN$.

Define three key quantities: the running sum $S_n$, the running maximum $M_n$, and the rate $r_n$:
\begin{align*}
    S_n:=\sum_{t=1}^n\eta_t, \qquad
    M_n:=\max_{1\le t\le n} \eta_t, \qquad
    r_n:=\mathcal R_n(\veta_{1:n}).
\end{align*}
We call $n$ a \emph{record time} if $\eta_n = M_n$: a step where it is the largest seen so far.
We aim to obtain inequalities between the key quantities and then eliminate $S_n$ and $M_n$ to yield a bound in terms of $r_n$ for large enough record times $n\in \sN$.

We start with two simple lower bounds of $r_n$.
First, if we take an empty checkpoint set $T=\emptyset$ for $\veta_{1:n}$, \cref{lem:GeneralLowerBound} gives
\begin{align}
    r_n \ge \frac{1}{4(1+S_n)}. \label{eq:naive_bound_empty_checkpoint}
\end{align}
Second, for a record time $n$, taking only the last step of $\veta_{1:n}$ as a single checkpoint (i.e., $T=\{n\}$),  \cref{lem:GeneralLowerBound} yields
\begin{align}
    r_n \ge \frac{1}{4}\open{\frac{\eta_n}{2(2+S_{n-1})}}^2 = \frac{1}{16}\open{\frac{M_n}{2+S_{n-1}}}^2. \label{eq:naive_bound_last_checkpoint_record_time}
\end{align}

The key lemma for this section is as follows.
It provides an upper bound for $S_n$ in terms of $M_n$, given that $r_n<1/4$.
Recall that $\nu_\star = 1/\alpha_\star = 1/\log_2(1+\sqrt{3})$.
\begin{restatable}[Sum bound at record times]{lemma}
{lemImprovedRecordSumBound}
\label{lem:ImprovedRecordSumBound}
    Suppose $n\ge 2$ is a record time of $\veta \in (0,\infty)^{\sN}$ and assume that $r_n\le 1/4$. Then,
    \begin{align*}
        S_n\le C\,nM_n^{1-\nu_\star}, \qquad 
        \text{where } C:=\frac{4^{\nu_\star}}{1-\nu_\star}.
    \end{align*}
\end{restatable}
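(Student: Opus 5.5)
The plan is to turn the hypothesis $r_n\le 1/4$ into a lower bound on a minimum cost, and then show that a large sum $S_n$ with steps capped by $M_n$ would force that cost to be small.

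\textbf{Step 1: from $r_n\le 1/4$ to a cost lower bound.} Take any $T=T_L\cup\{n\}$ with $T_L\subseteq[n-1]$. The terminal gap is then empty ($s_{k+1}=0$), so \cref{lem:GeneralLowerBound} gives $r_n\ge \tfrac14\gP(T;\veta_{1:n})^2$. The last factor of $\gP$ is $\eta_n/(2(2+s_k))=M_n/(2(2+s_k))$, because $n$ is a record time. Hence
\[
\gP(T;\veta_{1:n})=\frac{M_n}{2}\cdot\frac{1}{\Psi_2(T_L;\veta_{1:n-1})}.
\]
This is exactly the factorization \cref{eq:cost_factorization_main} with an empty right part. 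Minimizing over $T_L$, the assumption $r_n\le 1/4$ yields
\[
\gV_2(\veta_{1:n-1})\ \ge\ \frac{M_n}{2}.
\]
The same argument applied at any earlier record time, or with $n$ replaced by the position of a large step inside a window, gives analogous lower bounds on sub-costs. I expect to need these.

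\textbf{Step 2: a capped-step refinement of \cref{lem:WorstCaseCostBound}.} It remains to show that a length-$m$ schedule $\vxi$ with all entries in $(0,M]$ and $\gV_2(\vxi)\ge M/2$ satisfies $\sum\vxi\le C\,(m+1)\,M^{1-\nu_\star}$, with $m=n-1$ and $M=M_n$. The final step $\eta_n=M_n$ adds only $M_n\le M_n^{1-\nu_\star}\cdot M_n^{\nu_\star}$, which must also be absorbed. I would prove this by strong induction on $m$, splitting the schedule at an optimal singleton checkpoint $\tau$ as in the proof sketch of \cref{lem:WorstCaseCostBound}.

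\begin{itemize}[leftmargin=*,nosep]
\item \emph{Case $T=\emptyset$ is optimal.} Then $\gV_2(\vxi)=2+\sum\vxi$. Combining with $\gV_2(\vxi)^{\nu_\star}\le 2^{\nu_\star}(m+1)$ from \cref{eq:uniform_worst_case_cost_bound} gives
\[
\textstyle\sum\vxi\le \gV_2(\vxi)^{1-\nu_\star}\gV_2(\vxi)^{\nu_\star}\le 2^{\nu_\star}(m+1)\,\gV_2(\vxi)^{1-\nu_\star}.
\]
\item \emph{Otherwise.} \cref{lem:ExactCostRecursion} gives $\gV_2(\vxi)=\frac{2}{\xi_\tau}\gV_2(\vxi_{\mathrm{left}})\gV_2(\vxi_{\mathrm{right}})$ with $\xi_\tau\le M$. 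I then apply the induction hypothesis to the two halves. Each half has cost either $\ge M/2$, so the induction applies, or $<M/2$, in which case its sum is at most its cost, i.e.\ below $M/2$.
\end{itemize}

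The sums recombine through the concavity of $u\mapsto u^{1-\nu_\star}$. The constant $\frac{4^{\nu_\star}}{1-\nu_\star}$ suggests that this step, or a dyadic decomposition of the step sizes, produces a geometric series $\sum_j 2^{-j(1-\nu_\star)}$-type sum of order $1/(1-\nu_\star)$. Alternatively, one can bound the number of steps of size in $(M2^{-j-1},M2^{-j}]$ through the cost and then sum over $j$.

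\textbf{Main obstacle.} The hard part is Step 2. In particular, it is turning the upper bound on the cost (\cref{lem:WorstCaseCostBound}) into an upper bound on the sum when the cost is large. Here the cap $\xi_t\le M$ must be used in the singleton identity $w=x+y+\mu-2=2xy/\mu$, which gives $w\ge 2xy/M$, together with the subadditivity $\gQ(x,y)^{\nu_\star}<x^{\nu_\star}+y^{\nu_\star}$ (\cref{lem:QPowerSubadditivity}). I would first try the dyadic-level counting argument: group the steps of $\veta_{1:n-1}$ by size, and for each level use single-checkpoint sets together with Step 1 to cap the total mass at that level by roughly $n\,M^{1-\nu_\star}2^{-j(1-\nu_\star)}$. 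Summing over $j$ would then give the factor $1/(1-\nu_\star)$. I expect the precise constant $4^{\nu_\star}$ to come from $\lambda=2$ together with the $1/2$ in Step 1.
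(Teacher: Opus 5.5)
There is a genuine gap: your Step 1 is correct, but Step 2, which carries the whole lemma, is not established.

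\textbf{Step 1.} This matches the paper's first step. Checkpointing $T_L\cup\{n\}$ at a record time gives $\gV_2(\veta_{1:n-1})\ge M_n/(4\sqrt{r_n})\ge M_n/2$. This reduces the lemma to the capped-step estimate $\sum_{i=1}^{m}\xi_i+M\le C(m+1)M^{1-\nu_\star}$ for $\vxi=\veta_{1:n-1}$, $m=n-1$, $M=M_n$, which is \cref{lem:OrderSensitiveSumMaximum}.

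\textbf{Why the induction in Step 2 fails.} It breaks in both cases.
\begin{itemize}
\item If $\emptyset$ is optimal, your bound $\sum_i\xi_i\le 2^{\nu_\star}(m+1)\gV_2(\vxi)^{1-\nu_\star}$ has $\gV_2(\vxi)=2+\sum_i\xi_i$ on the right. It therefore collapses to the uniform bound $(\sum_i\xi_i)^{\nu_\star}\lesssim m+1$, with no dependence on $M$. Even combined with the trivial $\sum_i\xi_i\le mM$, it misses $C(m+1)M^{1-\nu_\star}$ whenever $1\ll M\ll m^{\alpha_\star}$.
\item In the other case, the halves need not inherit $\gV_2\ge M/2$. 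Your fallback for them, that cost below $M/2$ forces sum below $M/2$, is false. $\gV_2$ is a minimum over checkpoint sets and only satisfies $\gV_2\le 2+\sum$; for instance, many equal steps of size $c>4$ have cost $2(4/c)^{\ell}$ but sum $\ell c$.
\item The balance $w=x+y+\mu-2=2xy/\mu$ is also unavailable. It was derived only for a maximizer $\veta^\star$, where $\emptyset$ and $\{\tau\}$ are both tight, not for an arbitrary prefix.
\end{itemize}

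\textbf{The missing counting argument.} Your dyadic/counting alternative has the right shape. However, single-checkpoint sets give only per-step information and cannot bound how many steps are large. The missing idea (\cref{lem:OrderSensitiveCount}) is to checkpoint all large steps at once:
\begin{itemize}
\item For $0<h\le M$, let $\{t:\xi_t>h\}=\{t_1<\dots<t_k\}$ and iterate \cref{eq:cost_factorization} at every $t_i$.
\item Bound each of the $k+1$ blocks in between (lengths $m_j$, with $\sum_j(m_j+1)=m+1$) by $2(m_j+1)^{\alpha_\star}$, using \cref{eq:uniform_worst_case_cost_bound} at $\lambda=2$.
\item Apply AM--GM and the hypothesis to get $h/2\le\gV_2(\vxi)\le 2^{2k+1}h^{-k}\bigl(\tfrac{m+1}{k+1}\bigr)^{\alpha_\star(k+1)}$, i.e.\ $\gN_{\vxi}(h)+1\le 4^{\nu_\star}(m+1)h^{-\nu_\star}$.
\end{itemize}
The layer-cake identity $\sum_i\xi_i=\int_0^M\gN_{\vxi}(h)\,dh$ then gives $\sum_i\xi_i\le\int_0^M\bigl(4^{\nu_\star}(m+1)h^{-\nu_\star}-1\bigr)\,dh=C(m+1)M^{1-\nu_\star}-M$.

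\textbf{The constant.} The ``$+1$'' in the count is exactly what absorbs the last step $\eta_n=M_n$ and yields the stated $C=4^{\nu_\star}/(1-\nu_\star)$. Your remark $M_n\le M_n^{1-\nu_\star}M_n^{\nu_\star}$ absorbs nothing on its own, since $M_n^{\nu_\star}$ is unbounded. Grouping into dyadic levels instead of integrating exactly would only give the bound with a larger constant.
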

\begin{proof}[Proof Sketch of \cref{lem:ImprovedRecordSumBound}]
    Combining \cref{lem:WorstCaseCostBound} and the factorization in \cref{eq:cost_factorization_main}, we obtain a general sum-maximum bound (\cref{lem:OrderSensitiveSumMaximum}), roughly in order of $S_n + M_n \lesssim (n+1) M^{1-\nu_\star}$.
    Then, applying \cref{lem:GeneralLowerBound} for a checkpoint set $T_+ := T\cup \{n\}$ with $T\subseteq[n-1]$, to yield a cleaner bound $S_n \lesssim n M_n^{1-\nu_\star}$ for a record time $n$.
    See \cref{sec:anytime_proofs} for the full proof.
\end{proof}

Combining the three inequalities, \cref{eq:naive_bound_empty_checkpoint,eq:naive_bound_last_checkpoint_record_time,lem:ImprovedRecordSumBound}, we now eventually prove our second main theorem, which implies \cref{thm:AnytimeLowerBound}.
\begin{restatable}[Anytime lower bound]{theorem}{thmAnytimeLowerBoundFull}
\label{thm:AnytimeLowerBoundFull}
    There exists a universal constant $C_{\star}>0$ such that every positive infinite step-size schedule $\veta=(\eta_t)_{t\ge 1}$ satisfies
    \begin{align}
        \limsup_{n\to\infty}n^{\beta_\star}\gR_n(\veta_{1:n})\ge C_{\star}, \qquad \text{where } \beta_\star = \frac{2\alpha_\star}{1+\alpha_\star} = \frac{2\log_2 (1+\sqrt{3})}{1+\log_2 (1+\sqrt{3})}.
        \label{eq:improved_anytime_limsup}
    \end{align}
    In particular, no positive infinite schedule satisfies
    \begin{align}
        \gR_n(\veta_{1:n})=o(n^{-\beta_\star}).
        \label{eq:improved_anytime_no_little_o}
    \end{align}
\end{restatable}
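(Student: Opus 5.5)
The plan is to combine the three lower bounds \cref{eq:naive_bound_empty_checkpoint}, \cref{eq:naive_bound_last_checkpoint_record_time} and \cref{lem:ImprovedRecordSumBound}, after splitting on whether the schedule has bounded step-sizes. Let $M_\infty:=\sup_t\eta_t\in(0,\infty]$.

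\textbf{Case 1: $M_\infty<\infty$.} No record-time analysis is needed. Since $S_n\le nM_\infty$, \cref{eq:naive_bound_empty_checkpoint} gives
\[
r_n\ge \frac{1}{4(1+nM_\infty)}.
\]
Because $\beta_\star>1$, we get $n^{\beta_\star}r_n\to\infty$, so the $\limsup$ is $+\infty$.

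\textbf{Case 2: $M_\infty=\infty$.} Then there are infinitely many record times $n\ge2$, and $M_n\to\infty$ along them. It suffices to show $n^{\beta_\star}r_n\ge C_\star$ for every record time $n$ with $M_n\ge1$. If $r_n>1/4$, this is immediate, since $\beta_\star>0$ and $n\ge1$. Otherwise \cref{lem:ImprovedRecordSumBound} gives $S_{n-1}\le S_n\le X:=C\,nM_n^{1-\nu_\star}$.

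Note that $C=4^{\nu_\star}/(1-\nu_\star)\ge1$, $n\ge2$ and $M_n\ge1$ together give $X\ge2$. Hence $1+S_n\le 2X$ and $2+S_{n-1}\le 2X$. Plugging these into \cref{eq:naive_bound_empty_checkpoint} and \cref{eq:naive_bound_last_checkpoint_record_time} yields, with $M=M_n$,
\[
r_n\ge \max\Bigl\{a,b\Bigr\},\qquad a:=\frac{1}{8C\,nM^{1-\nu_\star}},\quad b:=\frac{M^{2\nu_\star}}{64C^2n^2}.
\]
The first bound decreases in $M$ and the second increases in $M$. So I will use the weighted AM–GM-type inequality $\max\{a,b\}\ge a^{\theta}b^{1-\theta}$, with $\theta\in[0,1]$ chosen to cancel the power of $M$.

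The exponent of $M$ in $a^{\theta}b^{1-\theta}$ is $-(1-\nu_\star)\theta+2\nu_\star(1-\theta)$. This vanishes for
\[
\theta=\frac{2\nu_\star}{1+\nu_\star}\in(0,1).
\]
The resulting exponent of $n$ is
\[
-\theta-2(1-\theta)=\theta-2=-\frac{2}{1+\nu_\star}=-\frac{2\alpha_\star}{1+\alpha_\star}=-\beta_\star.
\]
Therefore $r_n\ge C_\star' n^{-\beta_\star}$, where $C_\star'=(8C)^{-\theta}(64C^2)^{-(1-\theta)}$ depends only on $\nu_\star$.

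Taking $C_\star:=\min\{C_\star',4^{-1}\}$ covers both sub-cases, and since this holds along infinitely many $n$, it gives \cref{eq:improved_anytime_limsup}. Then \cref{eq:improved_anytime_no_little_o} follows directly, because $o(n^{-\beta_\star})$ would force the $\limsup$ to be $0$.

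The only genuinely delicate point is the bookkeeping that lets the additive constants $1$ and $2$ be absorbed into $X$. This is why I restrict to record times with $M_n\ge1$ and use $C\ge1$. The heavy lifting lies entirely in \cref{lem:ImprovedRecordSumBound}, which we may assume.
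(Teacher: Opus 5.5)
Your proof is correct and follows the paper's argument: the same bounded/unbounded split, and in the unbounded case the same three ingredients at record times (the empty-checkpoint bound, the last-checkpoint bound, and \cref{lem:ImprovedRecordSumBound}). The only difference is the order of elimination. You absorb the additive constants via $M_n\ge 1$ and eliminate $M_n$ with the weighted geometric mean, which gives the pointwise bound $n^{\beta_\star}r_n\ge (8C)^{-\beta_\star}$ at every large record time without first reducing to $r_n\to 0$. The paper instead eliminates $M_n$ through $S_{n-1}=S_n-M_n$ and then $S_n$ through $S_n\ge (1-4r_n)/(4r_n)$, which yields the slightly better constant $(4C)^{-\beta_\star}$ in the limit.
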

\begin{proof}[Proof of \cref{thm:AnytimeLowerBoundFull}]
    We distinguish cases based on whether the step-sizes in $\veta$ are bounded.
    
    \paragraph{Case 1: bounded step-sizes.}
    We have $\sup_n M_n=:M_\infty<\infty$; $M_\infty$ is a uniform bound of $\veta$.
    Since $S_n \le n\, M_n \le n\, M_\infty$, a naive bound \cref{eq:naive_bound_empty_checkpoint} gives
    \begin{align*}
        r_n\ge\frac{1}{4(1+nM_\infty)}, \qquad n\in \sN.
    \end{align*}
    Since $\beta_\star>1$, it holds that $n^{\beta_\star}r_n\to \infty$.
    Thus, \cref{eq:improved_anytime_limsup} holds for any choice of $C_{\star}>0$.

    \paragraph{Case 2: unbounded step-sizes.}
    Then $M_n\to\infty$; thus, there are infinitely many record times.
    If $r_n$ does not tend to $0$ along the record times, there exists $\varepsilon>0$ and infinitely many record times with $r_n\ge \varepsilon$, along which $n^{\beta_\star} r_n\to\infty$.
    Hence, we can safely assume $r_n\to 0$ along the record times $n$.

    Fix a record time $n\ge 2$ large enough that $r_n < 1/4$. 
    Then, \cref{eq:naive_bound_empty_checkpoint} gives
    \begin{align}
        S_n\ge \frac{1}{4r_n}-1=\frac{1-4r_n}{4r_n} >0
        \label{eq:Sn_bound_from_naive}
    \end{align}
    and \cref{eq:naive_bound_last_checkpoint_record_time} gives
    \begin{align*}
        M_n \le 4 (2+S_{n-1}) \sqrt{r_n} = 4 (2+S_n-M_n) \sqrt{r_n}.
    \end{align*}
    where we used $S_n-S_{n-1}=\eta_n =M_n$.
    Solving for $M_n$ yields
    \begin{align}
        M_n \le \frac{4 \sqrt{r_n}}{1+4 \sqrt{r_n}} (2+S_{n}).
        \label{eq:Mn_bound_from_naive}
    \end{align}
    Substitute \cref{eq:Mn_bound_from_naive} into  \cref{lem:ImprovedRecordSumBound} and divide both sides by $S_n$.
    Then, writing $\nu = \nu_\star$ and $\bar\nu = 1-\nu_\star$ for brevity, we have
    \begin{align*}
        1\le C n \cdot \open{\frac{4 \sqrt{r_n}}{1+4 \sqrt{r_n}}}^{\bar\nu} \open{1+\frac{2}{S_n}}^{\bar\nu} S_n^{-\nu}
    \end{align*}
    Applying \cref{eq:Sn_bound_from_naive},
    \begin{align*}
        1&\le C n \cdot \open{\frac{4 \sqrt{r_n}}{1+4 \sqrt{r_n}}}^{\bar\nu} \open{\frac{1+4r_n}{1-4r_n}}^{\bar\nu} \open{\frac{4r_n}{1-4r_n}}^{\nu} \\
        &=\frac{4 C n r_n^{\nu+(\bar\nu/2)}}{1-4r_n} \open{\frac{1+4r_n}{1+4 \sqrt{r_n}}}^{\bar\nu}.
    \end{align*}
    Hence, since $\nu + \frac{\bar\nu}{2} = \frac{1+\nu}{2} = \frac{1+\alpha_\star}{2\alpha_\star} = \frac{1}{\beta_\star}$,
    \begin{align*}
        n\, r_n^{1/\beta_\star} \ge \frac{1-4r_n}{4 C }\open{\frac{1+4 \sqrt{r_n}}{1+4r_n}}^{\bar\nu}.
    \end{align*}
    Recall that we assume $r_n \to 0$ along the record times.
    Taking exponent $\beta_\star$ and $\limsup$ to both sides, 
    \begin{align*}
        \limsup_{n\to\infty} n^{\beta_\star} r_n \ge (4C)^{-\beta_\star} = \frac{(1-\nu_\star)^{\beta_\star}}{16},
    \end{align*}
    which proves \cref{eq:improved_anytime_limsup} for $C_\star := \frac{(1-\nu_\star)^{\beta_\star}}{16} > 0$.
    The statement \cref{eq:improved_anytime_no_little_o} follows immediately.
\end{proof}

\paragraph{Why $\beta_\star = \frac{2\alpha_\star}{1+\alpha_\star}$?}
To provide a bird's-eye view of the proof above, let us explain the crux of it.
We combine the following three bounds:
\begin{align*}
    S_n \gtrsim r_n^{-1}, \qquad
    M_n \lesssim S_{n-1} \sqrt{r_n}, \qquad
    S_n \lesssim n M_n^{1-\nu_\star}.
\end{align*}
In particular, the last bound is from \cref{lem:ImprovedRecordSumBound}.
We also apply the fact that $n$ is a record time: as a result, $S_{n-1} = S_n - M_n$, which implies $M_n \lesssim S_n r_n^{1/2}$.
By briefly combining these, we obtain
\begin{align*}
    1 \lesssim n M_n^{1-\nu_\star}S_n^{-1} \lesssim n r_n^{\frac{1-\nu_\star}{2}} S_n^{-\nu_\star}\lesssim n r_n^{\frac{1+\nu_\star}2}.
\end{align*}
Thus, since $\beta_\star = \frac{2\alpha_\star}{1+\alpha_\star}=\frac{2}{1+\nu_\star}$, it follows that $n^{\beta_\star} r_n \gtrsim 1$ for infinitely many record times $n$.

%% file: sec/009Conclusion.tex
\section{Discussion}

We provided two novel, improved lower bounds on GD convergence rates for unconstrained smooth convex minimization.
Our first main result (\cref{thm:NonAnytimeLowerBound,thm:NonAnytimeLowerBoundFull}) was for the \textit{non-anytime} setup, where we could (conceptually) determine the worst-case schedule depending on every time horizon $N\ge 1$.
In this setup, we showed a lower bound $\Omega(N^{-\alpha_\star})$ in \cref{sec:nonanytime}, where $\alpha_\star=\log_2(1+\sqrt{3})\approx 1.450$.
On the other hand, our second result (\cref{thm:AnytimeLowerBound,thm:AnytimeLowerBoundFull}) was for the \textit{anytime} setup, where we should analyze the convergence rate of a given infinite schedule that applies to an arbitrary choice of time horizon.
In this case, we showed that \textit{no} infinte step-size schedule can attain $o(n^{-\beta_\star})$ rate in \cref{sec:anytime}, where $\beta_\star = \frac{2\alpha_\star}{1+\alpha_\star} \approx 1.184$.

\paragraph{The exponents $\alpha_\star$ \& $\beta_\star$.}
Our results show a particularly interesting similarity between the proven lower-bound exponents and the best-known upper-bound exponents.
Namely, in the non-anytime setup, the best-known upper-bound exponent is $\alpha_{\color{darkgreen}\mathrm{sil}}:= \log_2 (1+\sqrt{\color{darkgreen}2})$~\citep{altschuler2025acceleration,grimmer2023accelerated,zhang2026accelerated}, while our lower-bound exponent is $\alpha_{\color{red}\star} = \log_2(1+\sqrt{\color{red}3})$.
In the anytime setup, \citet{zhang2025anytime} show the $\frac{2\alpha_{\color{darkgreen}\mathrm{sil}}}{1+\alpha_{\color{darkgreen}\mathrm{sil}}}$ exponent upper bound, whereas we showed the $\beta_{\color{red}\star} =\frac{2\alpha_{\color{red}\star}}{1+\alpha_{\color{red}\star}}$ exponent lower bound.

\paragraph{Hard instance function $F$.}
In \cref{sec:hard_instance}, we defined our hard instance function by the sum of one-sided Huber component functions.
Each component function $H_{\delta_i}$ depends on two adjacent coordinates of the input, $x^{(i)}$ and $x^{(i+1)}$; each pair of consecutive component functions shares a single dependent variable.
Indeed, variants of Huber functions have been applied to compute convergence lower bounds in convex smooth settings in prior works~\citep{kornowski2024open,tsai2026lower,ma2026lower}.
In particular, the proof of Theorem 2 by \citet{kornowski2024open} applies a two-sided Huber function by showing that an overshoot may happen if the last step-size is large.
To the best of our knowledge, our construction of $F$ is novel: we create a chain of overshoots at (possibly large) checkpoint steps by activating at most a single component at once (see \cref{fig:one_block_dynamics}).

\paragraph{Limitations.}
Still, this work has not completely closed the gap between upper and lower convergence bounds, although it has substantially narrowed the gap.
We strongly believe our proofs of (non-)anytime lower bounds in \cref{sec:nonanytime,sec:anytime} are quite tight.
Nevertheless, we suspect the gap is due to our construction of a hard instance in \cref{sec:hard_instance}; we believe that the convergence rate lower bounds can be improved by a slightly different construction of the hard instance function if it results in a better (larger) checkpoint-dependent lower bound than our \cref{lem:GeneralLowerBound}.

%% file: sec/099Ethics.tex
\subsection*{AI use statement}

In this work, we used generative AI tools (e.g., ChatGPT 5.6 sol) to assist with writing proofs, proposing or refining hypotheses, and translation.
We have not used generative AI tools to develop theoretical models or conceptual frameworks, formulate mathematical claims, design or provide feedback on research methodology or experiments, or interpret results; generating synthetic data sets, implementing methods, cleaning and reformatting datasets, and supporting qualitative and thematic data analysis are not applicable to this work.
Additionally, we used generative AI tools to draft parts of the research paper, summarize or analyze existing literature, brainstorm, source/search for information, edit the paper to improve readability, identify relevant literature, and format references.
We have reviewed all AI-assisted work:
the correctness of the polished texts and refined proofs generated by LLM were verified line by line and totally reorganized by all three authors.
We take responsibility for the final content of this work, including text, claims, or artifacts produced with the aid of generative AI.

%% file: sec/900TableOfContents.tex
\part*{\large Table of Contents}

\begingroup
\hypersetup{linkcolor = black}  
\etocsettocstyle{}{}  
\etocsetnexttocdepth{subsection} 
\tableofcontents
\endgroup

%% file: sec/901Rescaling.tex
\section{Justification of Rescaling}
\label{sec:rescaling}

In \cref{sec:intro}, we define a convergence rate $\gR^{L}_N(\veta)$ for a general smoothness parameter $L>0$ as \cref{eq:convergence_rate_R_N}.
In this section, we show that it is enough to study the case of $L=1$, thereby providing a justification for studying $\gR_N(\veta) = \gR^{1}_N(\veta)$ in the rest of this paper.

\begin{restatable}[Rescaling preserves the trajectory]{lemma}{lemRescalingTrajectory}
\label{lem:RescalingTrajectory}
Consider a positive step-size schedule $\veta=(\eta_1,\ldots, \eta_N)\in(0,\infty)^N$ and a function $f:\R^d\rightarrow \R$ in $ \Fclass_L(\R^d)$. Define the rescaled schedule $\tilde\veta=(\tilde\eta_1,\ldots,\tilde\eta_N)$ and the rescaled function $\tilde f:\R^d\rightarrow \R$ by
\begin{align*}
\tilde\eta_t=L\eta_t, \qquad \tilde f(\vx)=\frac{1}{L}f(\vx).
\end{align*}
Then $\tilde f\in \Fclass_1(\R^d)$ and $\argmin \tilde f = \argmin f$.

Moreover, starting from the same initial point $\vx_0=\tilde\vx_0$, let $\vx_1,\cdots,\vx_N$ and $\tilde\vx_1,\cdots,\tilde\vx_N$ be the GD iterates defined by
\begin{align*}
    \vx_t = \vx_{t-1} - \eta_t \nabla f (\vx_{t-1}), \qquad \tilde\vx_t = \tilde\vx_{t-1} - \tilde\eta_t \nabla \tilde f (\tilde\vx_{t-1}),\qquad t \in [N].
\end{align*}
Then
\begin{align}
    \vx_t = \tilde\vx_t \qquad \text{for all }t=0,\ldots,N.
    \label{eq:rescaling_same_trajectory}
\end{align}
\end{restatable}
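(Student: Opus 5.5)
The claim follows from direct verification: first the properties of $\tilde f$, then the trajectory identity by induction on $t$.

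\emph{Properties of $\tilde f$.} Since $\tilde f = \frac1L f$ is a positive multiple of $f$, it is differentiable and convex; multiplying the gradient inequality for $f$ by $1/L>0$ gives the gradient inequality for $\tilde f$. Its gradient is $\nabla\tilde f = \frac1L\nabla f$, so for all $\vx,\vy$,
\begin{align*}
\norm{\nabla\tilde f(\vx)-\nabla\tilde f(\vy)} = \tfrac1L\norm{\nabla f(\vx)-\nabla f(\vy)} \le \norm{\vx-\vy},
\end{align*}
and $\tilde f$ is $1$-smooth. Positive scaling preserves the ordering of function values, so $\argmin\tilde f=\argmin f$. In particular, the minimizer set is nonempty and a proper subset of $\R^d$ exactly when this holds for $f$, hence $\tilde f\in\Fclass_1(\R^d)$.

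\emph{Trajectory identity.} We show \cref{eq:rescaling_same_trajectory} by induction on $t$. The base case $t=0$ holds by assumption. Suppose $\tilde\vx_{t-1}=\vx_{t-1}$ for some $t\in[N]$. Then
\begin{align*}
\tilde\vx_t = \vx_{t-1} - L\eta_t\cdot\tfrac1L\nabla f(\vx_{t-1}) = \vx_{t-1}-\eta_t\nabla f(\vx_{t-1}) = \vx_t.
\end{align*}

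There is no genuine obstacle here; the only point needing care is to confirm that both defining conditions of $\Fclass_1(\R^d)$ (smoothness and the nonempty proper minimizer set) transfer to $\tilde f$, which is done above. Afterwards, one would also observe that $\frac{\tilde f(\vx_N)-\tilde f(\vx^\star)}{\norm{\vx_0-\vx^\star}^2}=\frac{f(\vx_N)-f(\vx^\star)}{L\norm{\vx_0-\vx^\star}^2}$. Since $f\mapsto\frac1L f$ is a bijection from $\Fclass_L(\R^d)$ onto $\Fclass_1(\R^d)$, taking suprema gives $\gR^L_N(\veta)=\gR^1_N(L\cdot\veta)$, which is the rescaling identity used in the paper.
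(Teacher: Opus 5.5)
Your proposal is correct and follows essentially the same route as the paper: you transfer convexity and the minimizer set under positive scaling, use $\tfrac1L\norm{\nabla f(\vx)-\nabla f(\vy)}\le\norm{\vx-\vy}$ for $1$-smoothness, and prove the trajectory identity by induction on $t$. You are also slightly more explicit than the paper in checking that the nonempty-proper-minimizer-set condition of $\Fclass_1(\R^d)$ transfers to $\tilde f$, which is a harmless improvement.
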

\begin{proof}[Proof of \cref{lem:RescalingTrajectory}]
Since $f$ is convex, $\tilde f = \frac{1}{L}f$ is convex. Moreover, for every $\vx,\vy\in \R^d$,
\begin{align*}
    \norm{\nabla\tilde f(\vx)-\nabla\tilde f(\vy)}&=\frac{1}{L}\norm{\nabla f(\vx)-\nabla f(\vy)}\nonumber\\
    &\le \norm{\vx-\vy}.
\end{align*}
Since multiplication by a positive constant does not change the minimizer set, $\argmin \tilde f = \argmin f$. Hence $\tilde f$ is in $\Fclass_1(\R^d)$. 

We now compare the two GD trajectories. Suppose that $\tilde \vx_{t-1}=\vx_{t-1}$. Then
\begin{align*}
    \tilde\vx_{t}&=\tilde\vx_{t-1}-\tilde\eta_t\nabla \tilde f(\tilde\vx_{t-1})\nonumber\\
    &=\vx_{t-1}- \frac{\tilde\eta_t}{L}\nabla f(\vx_{t-1})\nonumber\\
    &=\vx_{t-1}-\eta_t\nabla f(\vx_{t-1})=\vx_t\nonumber.
\end{align*}
Since $\tilde\vx_0=\vx_0$, induction on $t$ proves \cref{eq:rescaling_same_trajectory}.
\end{proof}
Using the preceding lemma, we can derive $\gR_N^L(\veta)=\gR_N^1(\tilde \veta)$:
\begin{align*}
    \gR_N^L(\veta)&= \sup_{\substack{d\in \sN,\ f \in \Fclass_{L}(\R^d),\\
    \vx^\star \in X^\star_{f},\ \vx_{0} \in \R^d\setminus X^\star_{f}} }\frac{f(\vx_{N}) - f(\vx^\star)}{L \norm{\vx_{0} - \vx^\star}^2}\nonumber\\
    &=\sup_{\substack{d\in \sN,\ f \in \Fclass_{L}(\R^d),\\
    \vx^\star \in X^\star_{f},\ \vx_{0} \in \R^d\setminus X^\star_{f}}} \frac{\tilde f(\tilde \vx_{N}) - \tilde f(\vx^\star)}{\norm{\vx_{0} - \vx^\star}^2}\nonumber\quad \text{\color{lightgray}(since $ \tilde f=\tfrac{1}{L}f$ and $\tilde \vx_N=\vx_N$)}\\
    &=\sup_{\substack{d\in \sN,\ \tilde f \in \Fclass_{1}(\R^d),\\
    \vx^\star \in X^\star_{\tilde f},\ \vx_{0} \in \R^d\setminus X^\star_{\tilde f}}} \frac{\tilde f(\tilde \vx_{N}) - \tilde f(\vx^\star)}{\norm{\vx_{0} - \vx^\star}^2}\nonumber\quad \text{\color{lightgray}(since $ f \mapsto \tilde f$ is a bijection and $X^\star_{f}=X^\star_{\tilde f}$)}\\
    &=\gR_N^1(\tilde\veta)
\end{align*}

Finally, for a fixed length $N\ge 1$, the map $\veta \longmapsto \tilde\veta$ is a bijection from $(0,\infty)^N$ to itself. Therefore, considering arbitrary positive step-size
schedules for general $L>0$ is equivalent to considering arbitrary
positive step-size schedules in the normalized case $L=1$.
Hence, without loss of generality, we set $L=1$ throughout the rest of
the paper.

%% file: sec/902HardInstanceProofs.tex
\section{Proofs: Hard Instance}

\subsection{\texorpdfstring{Proof of \cref{prop:HardInstanceProperties}}{Proof of Prop 2.1}}
\label{subsec:hard_instance_properties}
We restate the proposition for readability.
\propHardInstanceProperties*
\begin{proof}[Proof of \cref{prop:HardInstanceProperties}]
    Recall that one-sided Huber function $H_{\delta}$ (\cref{eq:one_sided_huber}) is nonnegative, convex, and 1-smooth.
    The nonnegativity of $F$ follows directly. Also, $F$ is convex since a composition of a convex function and an affine function is convex.
    
    To show Lipschitz smoothness, let us fix $\vx, \vy \in \R^{k+1}$. Since $H_{\delta_i}'$ is nondecreasing and 1-Lipschitz, we can take $\vartheta_i \in [0,1]$ for each $i\in [k+1]$ such that
    \begin{align*}
        H_{\delta_i}'(z_i(\vx)) - H_{\delta_i}'(z_i(\vy)) 
        = \vartheta_i \cdot \open{z_i(\vx) - z_i(\vy)} 
        = \begin{dcases}
            \vartheta_i \cdot (\vx - \vy)^\top (\ve_i - \ve_{i+1}) & \text{if } i \in [k],\\
            \vartheta_{k+1} \cdot (\vx - \vy)^\top \ve_{k+1} & \text{if } i = k+1.
        \end{dcases}
    \end{align*}
    Thus, we have $\nabla F(\vx) - \nabla F(\vy) = \mM (\vx - \vy)$ for a matrix $\mM \in \R^{(k+1)\times(k+1)}$ defined as
    \begin{align*}
        \mM := \sum_{i=1}^k \frac{\vartheta_i}{4} \cdot (\ve_i - \ve_{i+1})(\ve_i - \ve_{i+1})^\top + \frac{\vartheta_{k+1}}{2} \cdot \ve_{k+1} \ve_{k+1}^\top.
    \end{align*}
    Take any $\vv = \open{v^{(1)}, \cdots, v^{(k+1)}} \in \R^{k+1}$. Observe that
    \begin{align*}
        \vv^\top \mM \vv &= \sum_{i=1}^k \frac{\vartheta_i}{4} \cdot \open{v^{(i)} - v^{(i+1)}}^2 + \frac{\vartheta_{k+1}}{2} \cdot \open{v^{(k+1)}}^2 \\
        &\le \sum_{i=1}^k \frac12 \closed{ \open{v^{(i)}}^2 + \open{v^{(i+1)}}^2 } + \frac12 \open{v^{(k+1)}}^2\\
        &\le \sum_{i=1}^{k+1} \open{v^{(i)}}^2 \\
        &= \norm{\vv}_2^2.
    \end{align*}
    The inequality in the second line above holds because $\vartheta_i \le 1$ and $\open{a-b}^2 \le 2\open{a^2+b^2}$ for any $a,b\in \R$.
    This proves $\norm{\mM}_2 \le 1$ ($\because \mM \preceq \mI$).
    Using this, we now have that $F$ is 1-smooth, as 
    \begin{align*}
        \norm{\nabla F(\vx) - \nabla F(\vy)}_2 \le \norm{\mM}_2 \norm{\vx - \vy}_2 \le  \norm{\vx - \vy}_2.
    \end{align*}

    Lastly, $z_i(\vzero) = -c_i \le 0$ and $H_{\delta_i} (-c_i) = 0$ for all $i\in [k+1]$, so $F(\vzero) = 0$. Since $F$ is nonnegative, $\vzero$ is indeed a minimizer of $F$.
\end{proof}

\subsection{\texorpdfstring{Proof of \cref{lem:OneStepGD}}{Proof of lem 2.2}}
We restate the lemma for readability.
\label{subsec:one_step_dynamics}
\lemOneStepGD*
\begin{proof}[Proof of \cref{lem:OneStepGD}]
    Define $g_i(\cdot):= H'_{\delta_i}(z_i(\cdot))= \min\{\max\{0, z_i(\cdot)\}, \delta_i\}$.
    In particular, it holds that:
    \begin{align*}
        \begin{dcases}
            g_i(\vx) = 0 & \text{when }z_i(\vx) \le 0, \text{ i.e., $\vx$ is inactive at component $i$;} \\
            g_i(\vx) = \delta_i & \text{when }z_i(\vx) \ge \delta_i, \text{ i.e., $\vx$ is in its $i$-th nonzero affine regime.}
        \end{dcases}
    \end{align*}

    If $k=0$, the lemma follows immediately. Thus we only consider the case $k\ge 1$. 
    
    For $k\ge 2$, the margins at $\vx_+$ satisfy
    \begin{align}
    \begin{dcases}
        z_1(\vx_+) &= z_1(\vx) \phantom{+ \frac{\eta}{4} g_{i-1}(\vx),} - \frac{\eta}{2} g_1(\vx) + \frac{\eta}{4} g_2(\vx), \\
        z_i(\vx_+) &= z_i(\vx) + \frac{\eta}{4} g_{i-1}(\vx) - \frac{\eta}{2} g_{i}(\vx) + \frac{\eta}{4} g_{i+1}(\vx), \qquad (1<i<k) \\
        z_k(\vx_+) &= z_k(\vx) + \frac{\eta}{4} g_{k-1}(\vx) - \frac{\eta}{2} g_{k}(\vx) + \frac{\eta}{2} g_{k+1}(\vx), \\
        z_{k+1}(\vx^+) &= z_{k+1}(\vx) + \frac{\eta}{4} g_{k}(\vx) - \frac{\eta}{2} g_{k+1}(\vx).
    \end{dcases} \label{eq:margin_one_step_update}
    \end{align}
    For $k=1$, the margins satisfy
    \begin{align}
    \begin{dcases}
        z_1(\vx_+) &= z_1(\vx)  - \frac{\eta}{2} g_1(\vx) + \frac{\eta}{2} g_2(\vx), \\
        z_{2}(\vx_+) &= z_{2}(\vx) + \frac{\eta}{4} g_{1}(\vx) - \frac{\eta}{2} g_{2}(\vx).
    \end{dcases} \label{eq:margin_one_step_update_k1}
    \end{align}
    These identities will be used below to prove \cref{item:margin_one_step_lower_bound}.
    Observe that
    \begin{align*}
        z_i(\vx) = \begin{dcases}
            \vx^\top (\ve_i - \ve_{i+1})-c_i & \text{if } i\in [k]; \\
            \vx^\top \ve_{k+1}-c_{k+1} & \text{if } i = k+1.
        \end{dcases}
    \end{align*}
    We can use this to obtain
    \begin{align}
    \begin{aligned}
        \nabla F(\vx) 
        &= \frac14 \sum_{i=1}^k H'_{\delta_i} (z_i(\vx)) (\ve_i - \ve_{i+1}) + \frac12 H'_{\delta_{k+1}} (z_{k+1}(\vx)) \ve_{k+1} \\
        &= \frac14 \sum_{i=1}^k g_i(\vx) (\ve_i - \ve_{i+1}) + \frac12 g_{k+1}(\vx) \ve_{k+1}
    \end{aligned} \label{eq:grad_F}
    \end{align}
    Thus, \cref{eq:margin_one_step_update} and \cref{eq:margin_one_step_update_k1} can be easily derived from the fact
    \begin{align*}
        z_i(\vx_+) - z_i(\vx) 
        = \begin{dcases}
            -\eta \nabla F(\vx)^\top (\ve_i - \ve_{i+1}) & \text{if } i\in [k]; \\
            -\eta \nabla F(\vx)^\top \ve_{k+1} & \text{if } i = k+1.
        \end{dcases}
    \end{align*}
    
    Consequently, since $z_i(\vx) \ge \delta_i$ implies $g_i(\vx) = \delta_i$, \cref{item:margin_one_step_lower_bound} follows from $g_i(\vx) \ge 0~(\forall i)$.
    
    Moreover, under the assumption of \cref{item:margin_one_step_edge}, we have $g_i(\vx) = \delta_i$ and $g_{i+1}(\vx) = \cdots = g_{k+1}(\vx) = 0$. Thus, from \cref{eq:grad_F},
    \begin{align*}
        x_+^{(i+1)} - x^{(i+1)} &= -\eta\nabla F(\vx)^\top \ve_{i+1} = \frac{\eta}{4} g_i(\vx) - 0 = \frac{\eta \delta_i}{4}, \\
        x_+^{(j')} - x^{(j')} &= -\eta\nabla F(\vx)^\top \ve_{j'} = 0 \quad (i+2 \le j' \le k+1).
    \end{align*}
    This proves \cref{item:margin_one_step_edge}, concluding the proof.
\end{proof}

\vspace{-5pt}
\subsection{Lemmas for General Lower Bound}
\vspace{-5pt}

We restate the definition of the parameters for readability.

With $c_1=0$ and $A_1 = r_0$, we set
\begin{align}
    \delta_i = \frac{2A_i}{2 + s_i},~~
    c_{i+1} = \frac{s_i \delta_i}{4},~~
    A_{i+1} = \frac{b_i \delta_i}{4},~~ (\forall i\in[k]); \qquad  
    \delta_{k+1} = \frac{A_{k+1}}{1 + s_{k+1}}.  
    \label{eq:param_recursive_again}
\end{align}

Rolling out the recursion above, the parameters can be explicitly expressed by $r_0>0$ (initial position), $(s_i)_{i=1}^{k+1}$ (small step partial sums), and $(b_i)_{i=1}^{k}$ (big steps):
\begingroup
\allowdisplaybreaks
\begin{align}
    A_i &= r_0 \prod_{j=1}^{i-1} \frac{b_j}{2(s_j + 2)}, &&i \in [k+1]; \label{eq:parameter_A_i_rolledout}
    \\
    c_{i+1} &= \frac{r_0 s_i}{2(s_i + 2)} \prod_{j=1}^{i-1} \frac{b_j}{2(s_j + 2)}, && i \in [k];
    \nonumber\\
    \delta_i &= \frac{2 r_0}{s_i + 2} \prod_{j=1}^{i-1} \frac{b_j}{2(s_j + 2)}, && i \in [k];
    \nonumber\\
    \delta_{k+1} &= \frac{r_0}{s_{k+1} + 1} \prod_{j=1}^{k} \frac{b_j}{2(s_j + 2)}. \nonumber
\end{align}
\endgroup


    

\begin{restatable}[Gaps stay in current affine region]{lemma}{lemGapsStayAffine}
\label{lem:GapsStayAffine}
    Fix any $i \in [k]$.
    Suppose that, at the beginning of a block $i$ (i.e., $t=t_{i-1}$), 
    $z_i(\vx_{t_{i-1}}) = A_i$ and $x_{t_{i-1}}^{(j)} = 0$ for $j \ge i+1$.
    Then, for all $t=t_{i-1},\ldots,t_i-1$, if we write $\sigma_{i,t} := \sum_{\tau = t_{i-1} + 1}^t \eta_\tau$, the GD iterate $\vx_t$ satisfies:
    \begin{align}
        z_i(\vx_t) \ge A_i - \frac{\sigma_{i,t} \delta_i}{2} \ge \delta_i, \quad
        x_t^{(i+1)} = \frac{\sigma_{i,t} \delta_i}{4}, \quad
        \text{and } x_t^{(j)} = 0 \quad (j \ge i+2). \label{eq:small_step_dynamics}
    \end{align}
    As a result, the GD iterate $\vx_t$ is in its $i$-th nonzero affine regime (i.e., $z_i(\vx_t) \ge \delta_i$) and inactive at every component $j\ge i+1$ (i.e., $z_j(\vx_t) \le 0$).
\end{restatable}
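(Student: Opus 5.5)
We argue by induction on $t$ from $t=t_{i-1}$ up to $t=t_i-1$, proving the three claims in \cref{eq:small_step_dynamics} at once. At $t=t_{i-1}$ we have $\sigma_{i,t}=0$, so the claims reduce to $z_i(\vx_{t_{i-1}})=A_i$, $x^{(i+1)}_{t_{i-1}}=0$, and $x^{(j)}_{t_{i-1}}=0$ for $j\ge i+2$; these are exactly the hypotheses. Before starting, we record the a priori bound $A_i-\frac{\sigma_{i,t}\delta_i}{2}\ge A_i-\frac{s_i\delta_i}{2}$, valid for every $t\le t_i-1$ since $\sigma_{i,t}\le s_i$. Using $\delta_i=\frac{2A_i}{2+s_i}$, the right-hand side equals $\delta_i\bigl(\frac{2+s_i}{2}-\frac{s_i}{2}\bigr)=\delta_i$. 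This gives the second inequality in \cref{eq:small_step_dynamics} uniformly over the whole gap.

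For the inductive step, suppose the claims hold at some $t\le t_i-2$. We first check that every subsequent margin is inactive. For $i+1\le j\le k$ we have
\begin{align*}
z_j(\vx_t)=x_t^{(j)}-x_t^{(j+1)}-c_j,
\end{align*}
and for $j\ge i+2$ both coordinates vanish, so $z_j=-c_j\le 0$. The case $j=i+1$ is the delicate one: here $z_{i+1}(\vx_t)=\frac{\sigma_{i,t}\delta_i}{4}-c_{i+1}$ with $c_{i+1}=\frac{s_i\delta_i}{4}$, and this is nonpositive because $\sigma_{i,t}\le s_i$. The same computation applies if $i+1=k+1$, since then $z_{k+1}=x^{(k+1)}_t-c_{k+1}$. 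Together with $z_i(\vx_t)\ge\delta_i$, this means the hypotheses of \cref{lem:OneStepGD}~\ref{item:margin_one_step_edge} hold at $\vx_t$ with step $\eta=\eta_{t+1}$.

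That lemma then gives $x^{(i+1)}_{t+1}=x^{(i+1)}_t+\frac{\eta_{t+1}\delta_i}{4}=\frac{\sigma_{i,t+1}\delta_i}{4}$ and $x^{(j)}_{t+1}=0$ for $j\ge i+2$. Independently, \cref{lem:OneStepGD}~\ref{item:margin_one_step_lower_bound} gives $z_i(\vx_{t+1})\ge z_i(\vx_t)-\frac{\eta_{t+1}\delta_i}{2}\ge A_i-\frac{\sigma_{i,t+1}\delta_i}{2}$. Since $t+1\le t_i-1$, we still have $\sigma_{i,t+1}\le s_i$, so the a priori bound from the first paragraph applies and the induction closes.

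The final sentence of the lemma then follows: at every $t$ in the range, $z_i(\vx_t)\ge\delta_i$, and the inactivity of all $z_j$ for $j\ge i+1$ was established within the inductive step. The main thing to watch is the index bookkeeping. The argument uses $\sigma_{i,t}\le s_i$ only for $t\le t_i-1$, which is exactly where the choice $c_{i+1}=\frac{s_i\delta_i}{4}$ makes $z_{i+1}$ reach $0$ at the end of the gap and no earlier. The other subtle points are the boundary case $i+1=k+1$, where $z_{k+1}$ has a different form, and the empty gap $t_i=t_{i-1}+1$, where only the base case is needed.
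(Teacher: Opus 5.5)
Your proof is correct and is essentially the paper's argument: an induction over the gap using parts (i) and (ii) of \cref{lem:OneStepGD}. In it, $A_i=\frac{(2+s_i)\delta_i}{2}$ and $\sigma_{i,t}\le s_i$ give $z_i(\vx_t)\ge\delta_i$, and $c_{i+1}=\frac{s_i\delta_i}{4}$ gives $z_{i+1}(\vx_t)=\frac{\delta_i}{4}(\sigma_{i,t}-s_i)\le 0$. You are slightly more careful than the paper in checking that the later margins are inactive before invoking part (ii). However, your inductive step runs only for $t\le t_i-2$, so it does not literally cover that check at the endpoint $t=t_i-1$ (where $z_{i+1}=0$). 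The same computation handles it, and that case is needed for \cref{lem:BigStepActivatesNext}.
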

\begin{proof}[Proof of \cref{lem:GapsStayAffine}]
    We proceed with induction in $t=t_{i-1},\ldots,t_i-1$ to prove \cref{eq:small_step_dynamics}. 
    First, due to \cref{eq:param_recursive_again}, we have $A_i = \frac{s_i  +2}{2}\cdot \delta_i \ge \delta_i$ and $c_{i+1} = \frac{s_i\delta_i}{4}$.
    Thus, since $\sigma_{i,t_{i-1}}=0$, \cref{eq:small_step_dynamics} is true at $t=t_{i-1}$ (base case).
    
    Next, take any $t$ such that $t_{i-1} + 1 \le t \le t_i - 1$ and impose 
    the inductive hypothesis (namely, assuming \cref{eq:small_step_dynamics} at step $t-1$).
    Then, we can apply \cref{lem:OneStepGD}'s~\cref{item:margin_one_step_lower_bound,item:margin_one_step_edge}, as well as the facts
    \begin{align*}
        \sigma_{i,t-1} + \eta_t = \sigma_{i,t} \quad \text{and} \quad \sigma_{i,t} \le s_i
    \end{align*}
    to deduce the following:
    \begin{align*}
        z_{i}(\vx_t)
        &\stackrel{\text{\cref{lem:OneStepGD}~\ref{item:margin_one_step_lower_bound}}}{\ge} z_{i}(\vx_{t-1}) - \frac{\eta_t \delta_{i}}{2}
        \stackrel{\text{\cref{eq:small_step_dynamics}}}{\ge} A_i - \frac{\sigma_{i,t} \delta_{i}}{2} \ge A_i - \frac{s_i \delta_i}{2} = \delta_i; \\
        x_{t}^{(i+1)} 
        &\stackrel{\text{\cref{lem:OneStepGD}~\ref{item:margin_one_step_edge}}}{=} x_{t-1}^{(i+1)} + \frac{\eta_t \delta_i}{4}
        \stackrel{\text{\cref{eq:small_step_dynamics}}}{=} \frac{\sigma_{i,t} \delta_i}{4}; \\
        x_{t}^{(j)} 
        &\stackrel{\text{\cref{lem:OneStepGD}~\ref{item:margin_one_step_edge}}}{=} x_{t-1}^{(j)}
        \stackrel{\text{\cref{eq:small_step_dynamics}}}{=} 0 \quad (j \ge i+2).
    \end{align*}
    Hence, it proves \cref{eq:small_step_dynamics} completely. As a result, the first line above shows that $z_i(\vx_t) \ge \delta_i$ for all $t=t_{i-1},\ldots, t_i - 1$.
    Moreover, the second and third lines imply $z_j (\vx_t) \le 0$ for $t=t_{i-1},\ldots, t_i - 1$ and $j=i+1,\ldots,k+1$, because
    \begin{align*}
        z_{i+1}(\vx_t)
        &= x_{t}^{(i+1)} - x_{t}^{(i+2)} - c_{i+1}
        = \frac{\delta_i}{4} (\sigma_{i,t} - s_i) \le 0;
        \\
        z_{j}(\vx_t)
        &= x_{t}^{(j)} - x_{t}^{(j+1)} - c_{j}
        = -c_j \le 0 \quad (j \ge i+2).
    \end{align*}
\end{proof}

\begin{restatable}[Big step activates the next block]{lemma}{lemBigStepActivateNext}
\label{lem:BigStepActivatesNext}
    Fix any $i\in[k]$.
    Suppose the same assumption as \cref{lem:GapsStayAffine}.
    Then, after the big step $b_i=\eta_{t_i}$, we have
    \begin{align}
        z_{i+1}(\vx_{t_i})=A_{i+1},
        \qquad
        x_{t_i}^{(j)}=0
        \quad (j\ge i+2).
        \label{eq:big_step_dynamics}
    \end{align}
    In particular, $\vx_{t_i}$ is in its $(i+1)$-st nonzero affine
    regime and is inactive at every component $j\ge i+2$.
\end{restatable}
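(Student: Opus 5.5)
We start from \cref{lem:GapsStayAffine} applied at $t=t_i-1$; it holds under the same assumption. Since $\sigma_{i,t_i-1}=s_i$, that lemma describes the iterate just before the checkpoint step:
\begin{align*}
    z_i(\vx_{t_i-1})\ge\delta_i,\qquad x_{t_i-1}^{(i+1)}=\frac{s_i\delta_i}{4},\qquad x_{t_i-1}^{(j)}=0\quad(j\ge i+2).
\end{align*}
It also gives $z_j(\vx_{t_i-1})\le 0$ for all $j\ge i+1$. These are exactly the hypotheses of \cref{lem:OneStepGD}~\ref{item:margin_one_step_edge} at $\vx=\vx_{t_i-1}$ with $\eta=b_i=\eta_{t_i}$.

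Next we apply \cref{lem:OneStepGD}~\ref{item:margin_one_step_edge}. It gives
\begin{align*}
    x_{t_i}^{(i+1)}=\frac{s_i\delta_i}{4}+\frac{b_i\delta_i}{4},\qquad x_{t_i}^{(j)}=0\quad(j\ge i+2).
\end{align*}
This already proves the second half of \cref{eq:big_step_dynamics}.

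For the margin, we compute directly, separating two cases:
\begin{itemize}[leftmargin=*,nosep]
    \item If $i+1\le k$, then $z_{i+1}(\vx_{t_i})=x_{t_i}^{(i+1)}-x_{t_i}^{(i+2)}-c_{i+1}$.
    \item If $i+1=k+1$, then $z_{k+1}(\vx_{t_i})=x_{t_i}^{(k+1)}-c_{k+1}$, since there is no $(i+2)$-th coordinate.
\end{itemize}
In both cases $x^{(i+2)}_{t_i}=0$ and $c_{i+1}=\frac{s_i\delta_i}{4}$ by \cref{eq:param_recursive_again}. Hence
\begin{align*}
    z_{i+1}(\vx_{t_i})=\frac{s_i\delta_i+b_i\delta_i}{4}-\frac{s_i\delta_i}{4}=\frac{b_i\delta_i}{4}=A_{i+1},
\end{align*}
which is the first half of \cref{eq:big_step_dynamics}.

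It remains to prove the ``in particular'' claims.
\begin{itemize}[leftmargin=*,nosep]
    \item \emph{Affine regime for $i+1\le k$.} We need $A_{i+1}\ge\delta_{i+1}$. This holds because $\delta_{i+1}=\frac{2A_{i+1}}{2+s_{i+1}}\le A_{i+1}$.
    \item \emph{Affine regime for $i+1=k+1$.} Here $\delta_{k+1}=\frac{A_{k+1}}{1+s_{k+1}}\le A_{k+1}$.
    \item \emph{Inactivity for $j\ge i+2$.} Since $x^{(j)}_{t_i}=x^{(j+1)}_{t_i}=0$, we get $z_j(\vx_{t_i})=-c_j\le 0$.
\end{itemize}

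No genuine obstacle is expected. The only care needed is the boundary case $i=k$, where the margin formula changes form. One should also note that the checkpoint step can overshoot the $i$-th margin, possibly making $z_i(\vx_{t_i})$ negative. This is harmless, because \cref{lem:OneStepGD}~\ref{item:margin_one_step_edge} constrains only the coordinates $i+1$ and beyond, which is all the claim concerns.
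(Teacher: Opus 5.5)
Your proof is correct and follows essentially the same route as the paper: invoke \cref{lem:GapsStayAffine} at $t=t_i-1$ to get $x_{t_i-1}^{(i+1)}=s_i\delta_i/4=c_{i+1}$ with all later coordinates zero, then apply \cref{lem:OneStepGD}~\ref{item:margin_one_step_edge} with step $b_i$ to read off $z_{i+1}(\vx_{t_i})=b_i\delta_i/4=A_{i+1}$, and finally check $\delta_{i+1}\le A_{i+1}$ in both parameter cases. Two points the paper leaves implicit (it covers the $i=k$ case through the convention $x^{(d')}=0$ for $d'>d$) are made explicit in your version: you verify the inactivity hypothesis $z_j(\vx_{t_i-1})\le 0$ for $j\ge i+1$ needed by \cref{lem:OneStepGD}~\ref{item:margin_one_step_edge}, and you treat the boundary case $i=k$ separately.
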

\begin{proof}[Proof of \cref{lem:BigStepActivatesNext}]
    Recall from the results in \cref{lem:GapsStayAffine} that, immediately before the step $t_i$,
    \begin{align*}
        z_i(\vx_{t_i - 1}) \ge \delta_i, \qquad
        x_{t_i - 1}^{(i+1)} = \frac{s_i \delta_i }{4} = c_{i+1}, \qquad
        x_{t_i - 1}^{(j)} = 0 \quad (j \ge i+2).
    \end{align*}
    So if we apply \cref{lem:OneStepGD}~\ref{item:margin_one_step_edge} with step size $\eta_{t_i}=b_i$, we have
    \begin{align*}
        x_{t_i}^{(i+1)}=c_{i+1}+\frac{b_i\delta_i}{4},
        \qquad
        x_{t_i}^{(j)}=0 \quad (j\ge i+2).
    \end{align*}
    It follows from the definition of the $(i+1)$-st margin and
    \cref{eq:param_recursive_again} that
    \begin{align*}
        z_{i+1}(\vx_{t_i})
        &=x_{t_i}^{(i+1)}-x_{t_i}^{(i+2)}-c_{i+1}
        =\frac{b_i\delta_i}{4}
        =A_{i+1}.
    \end{align*}
    This proves \cref{eq:big_step_dynamics}.
    We now verify the claimed regimes. By our parameter choice (\cref{eq:param_recursive_again}),
    \begin{align*}
        \delta_{i+1} =
        \begin{dcases}
            \dfrac{2A_{i+1}}{s_{i+1}+2}, & i<k;\\ \dfrac{A_{k+1}}{s_{k+1}+1}, & i=k;
        \end{dcases}
        \qquad\implies\qquad
        \delta_{i+1}\le A_{i+1}.
    \end{align*}
    Hence
    $z_{i+1}(\vx_{t_i})=A_{i+1}\ge\delta_{i+1}$, so component $i+1$ is in its nonzero affine regime. For every $j\ge i+2$, all coordinates from $j$ onward are zero, and thus \(z_j(\vx_{t_i})=-c_j\le0 \). Therefore, all components $j\ge i+2$ remain inactive.
\end{proof}

\subsection{\texorpdfstring{Proof of \cref{lem:GeneralLowerBound}}{Proof of Lem 3.2}}
\label{subsec:general_lowerbound_proof}

Now, by applying \cref{lem:GapsStayAffine,lem:BigStepActivatesNext} alternately and dealing with the last ($(k+1)$-st) block of small step sizes, we finally obtain a general lower bound of the function value gap as in \cref{lem:GeneralLowerBound} below.

\lemGeneralLowerBound*
\begin{proof}[Proof of \cref{lem:GeneralLowerBound}]
    Let us first establish the $i$-th margin and zero coordinates at the beginning of the $i$-th block for every $i\in[k+1]$: the margin equals $z_i(\vx_{t_{i-1}})=A_i$ and $j$-th component is $x_{t_{i-1}}^{(j)}=0$ ($i+1 \le j \le k+1$).
    For $i=1$, recall that $\vx_0=r_0\ve_1$, $A_1=r_0$, and $c_1=0$. Thus $z_1(\vx_0)=r_0=A_1$ and $x_0^{(j)}=0$ ($j\ge2$); the claim holds at the start of the first block.
    Also, if the claim holds for $i\in[k]$, then applying \cref{lem:GapsStayAffine,lem:BigStepActivatesNext} together shows the same claim for $i+1$.
    Hence, an induction over $i\in[k+1]$ proves the claim.
    In particular, it holds that $z_{k+1}(\vx_{t_k}) = A_{k+1} \ge \delta_{k+1}$, thereby being able to apply \cref{lem:OneStepGD}~\ref{item:margin_one_step_lower_bound} for $(k+1)$-st margins.
    
    Now, we handle the terminal ($(k+1)$-st) block $s_{k+1}$.
    For brevity, write $A=A_{k+1}$, $s=s_{k+1}$, and $\delta=\delta_{k+1}=A/(s+1)$. Consider any iterate in the terminal block, and let $\sigma_t:=\sum_{\tau=t_k+1}^{t}\eta_\tau$ for $t=t_k,\ldots,N$. Since $\sigma_t\le s$, repeated application of \cref{lem:OneStepGD}~\ref{item:margin_one_step_lower_bound} yields
    \begin{align*}
        z_{k+1}(\vx_t)
        &\ge A-\frac{\sigma_t\delta}{2} \ge A-\frac{s\delta}{2} = \frac{A(s+2)}{2(s+1)} \ge\frac{A}{s+1} =\delta.
    \end{align*}
    Thus the last component stays in its nonzero affine regime throughout the terminal block. In particular, the margin of the last iterate satisfies $z_{k+1}(\vx_N) \ge A-\frac{s\delta}{2}$.

    Since $F$ is nonnegative componentwise and the last Huber term appears with coefficient $1/2$, we may keep only that component. Using the affine expression $H_\delta(z)=\delta z-\delta^2/2$ valid for $z\ge\delta$, we obtain
     \begin{align*}
        F(\vx_N)
        &\ge \frac{1}{2}H_\delta\open{z_{k+1}(\vx_N)} \\
        &=\frac{\delta}{2}\, z_{k+1}(\vx_N)-\frac{\delta^2}{4}\\
        &\ge \frac{\delta}{2}\left(A-\frac{s\delta}{2}\right)-\frac{\delta^2}{4} \\
        &=\frac{\delta A}{2}-\frac{(s+1)\delta^2}{4}
        =\frac{A^2}{4(s+1)}.
    \end{align*}
    Therefore, substituting the explicit formula for $A_{k+1}$ in~\cref{eq:parameter_A_i_rolledout} directly proves \cref{eq:general_function_value_lower_bound}.
    
    Lastly, recall from~\cref{prop:HardInstanceProperties} that $F \in \mathscr{F}_{1}(\R^{k+1})$ and $\vzero \in X_F^{\star}$.
    Since the convergence rate~\cref{eq:convergence_rate_R_N} is defined as a supremum, dividing by $\norm{\vx_0-\vzero}_2^2=r_0^2$ finally yields the lower bound in \cref{eq:general_normalized_lower_bound}.
\end{proof}

We remark that the lower bound above applies to an arbitrary selection of $\bigset{t_1, \cdots, t_k} \subset [N]$ as well as their count ($k$), and its strength depends critically on that selection. 
Later, we optimize this general lower bound over all choices of big-step locations to obtain our convergence rate lower bounds.

%% file: sec/903NonAnytimeProofs.tex
\section{Proofs: Non-Anytime Lower Bound}
\label{sec:nonanytime_proofs}

\subsection{\texorpdfstring{Proof of \cref{eq:all_selection_objective_reformulated}}{Proof of Eq. (13)}}
\label{subsec:TerminalParameterIdentity}

    For $s\ge0$ and $\lambda\ge2$, let $\chi_s(\lambda):=4(\lambda-1)/(\lambda+s)^2$. Its derivative is
    \begin{align*}
        \chi_s'(\lambda)
        =\frac{4(\lambda+s)-8(\lambda-1)}{(\lambda+s)^{3}}=\frac{4(s+2-\lambda)}{(\lambda+s)^3},
    \end{align*}
    so $\chi_s$ is maximized over $\lambda\ge2$ at $\lambda=s+2\ge2$, with
    \begin{align*}
        \max_{\lambda\ge2}\chi_s(\lambda)=\frac{4(s+1)}{(2s+2)^2}=\frac{1}{1+s}.
    \end{align*}
    Using $s=s_{\abs{T}+1}(T;\veta)$, we derive
    \begin{align*}
        \max_{\lambda\ge2}\frac{4(\lambda-1)}{\gV_\lambda(\veta)^2}
        &=\max_{\lambda\ge2}\max_{T\subseteq[N]}\frac{4(\lambda-1)\,\gP(T;\veta)^2}{\open{\lambda+s_{\abs{T}+1}(T;\veta)}^2}\\
        &=\max_{T\subseteq[N]}\gP(T;\veta)^2\max_{\lambda\ge2}\frac{4(\lambda-1)}{\open{\lambda+s_{\abs{T}+1}(T;\veta)}^2}\\
        &=\max_{T\subseteq[N]}\frac{\gP(T;\veta)^2}{1+s_{\abs{T}+1}(T;\veta)}
        =\gB(\veta),
    \end{align*}
    proving the identity.

\subsection{\texorpdfstring{Proof of \cref{lem:ExactCostRecursion}}{Proof of Lem. 3.1}}
\label{subsec:proof_cost_recursion}

We restate the lemma for readability.

\lemExactCostRecursion*
\begin{proof}[Proof of \cref{lem:ExactCostRecursion}]
    The first term in \cref{eq:exact_cost_recursion} is the cost $\Psi_\lambda(\emptyset;\veta_{1:n})$ associated with the empty checkpoint set.
    Now fix a nonempty checkpoint set $T$ and let $t\in T$. Split $T=T_L\cup\{t\}\cup T_R$ with $T_L\subseteq[1,t-1]$ and $T_R\subseteq[t+1,n]$. The gaps of $T$ strictly to the left of $t$ are exactly the gaps of $T_L$ inside $\veta_{1:t-1}$, the last of them being the terminal gap of $T_L$, the gaps strictly to the right are exactly the gaps of $T_R$ inside $\veta_{t+1:n}$, and the factor attached to the checkpoint index $t$ is $2/\eta_t$. Therefore
    \begin{align}
        \Psi_\lambda(T;\veta_{1:n})
        &=
        \frac{2}{\eta_t}\cdot
        \frac{2+s_{\abs{T_L}+1}(T_L;\veta_{1:t-1})}{\gP(T_L;\veta_{1:t-1})}
        \cdot
        \Psi_\lambda(T_R;\veta_{t+1:n})\nonumber\\
        &=\frac{2}{\eta_t}\cdot
        \Psi_2(T_L;\veta_{1:t-1})
        \cdot
        \Psi_\lambda(T_R;\veta_{t+1:n}).
        \label{eq:cost_factorization}
    \end{align}
    Minimizing \cref{eq:cost_factorization} over $T_L$ and $T_R$ independently shows that the minimum of $\Psi_\lambda(T;\veta_{1:n})$ over all checkpoint sets $T$ containing $t$ equals $\frac{2}{\eta_t}\gV_2(\veta_{1:t-1})\gV_\lambda(\veta_{t+1:n})$. Conversely, joining minimizers of the two costs $\gV_2(\veta_{1:t-1})$ and $\gV_\lambda(\veta_{t+1:n})$ with the index $t$ produces a checkpoint set attaining this value. Taking the minimum over $t\in[n]$ ranges over all nonempty checkpoint sets, and combining with the empty checkpoint set proves \cref{eq:exact_cost_recursion}.
\end{proof}

\subsection{Existence and Strict Growth of Maximal Schedules}
\label{subsec:proof_extremal_schedule}

\begin{lemma}
    \label{lem:ExtremalSchedule}
        For every $n\ge 0$ and $\lambda\ge 2$, the supremum defining $\gU_n(\lambda)$ is finite and attained. Moreover, the sequence $n\mapsto \gU_n(\lambda)$ is strictly increasing:
        \begin{align}
            \gU_n(\lambda)>\gU_{n-1}(\lambda),\qquad n\ge 1.
            \label{eq:worst_case_cost_strict_growth}
        \end{align}
    \end{lemma}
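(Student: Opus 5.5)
The plan is to prove finiteness by a crude a priori bound on $\gV_\lambda(\veta)$ that is uniform in $\veta$. Attainment then follows from a compactness argument on a restricted set of schedules. Strict growth comes from appending or inserting a step to a maximizer.

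\textbf{Finiteness.} I would argue by induction on $n$, using \cref{lem:ExactCostRecursion}. For $n=0$ we have $\gU_0(\lambda)=\lambda$. Suppose $\gU_i(2)$ and $\gU_i(\lambda)$ are finite for all $i<n$, and let $M:=\max_{t\le n}\eta_t$ be attained at $t^*$.

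\begin{itemize}
\item Using the empty set gives $\gV_\lambda(\veta)\le\lambda+nM$.
\item Using $\{t^*\}$ in \cref{eq:exact_cost_recursion} gives $\gV_\lambda(\veta)\le \frac{2}{M}\gU_{t^*-1}(2)\,\gU_{n-t^*}(\lambda)$.
\end{itemize}

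Hence
\begin{align*}
\gV_\lambda(\veta)\le\min\Bigl\{\lambda+nM,\ \tfrac{2K}{M}\Bigr\},\qquad K:=\max_{i+j=n-1}\gU_i(2)\,\gU_j(\lambda).
\end{align*}
The right-hand side is bounded uniformly in $M>0$, by roughly $\lambda+\sqrt{2nK}$. So $\gU_n(\lambda)<\infty$.

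\textbf{Attainment.} The bound above shows more: any near-maximizing sequence $\veta^{(m)}$ has $M^{(m)}$ confined to a compact interval $[m_0,M_0]$ with $m_0>0$. The lower end holds because if $M$ is tiny, $\gV_\lambda\le\lambda+nM$ is close to $\lambda\le\gU_{n-1}(\lambda)$. The upper end holds because if $M$ is huge, $2K/M$ is tiny. Individual coordinates could still tend to $0$, so I pass to a subsequence converging in $[0,M_0]^n$.

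The key observation is that $\gV_\lambda$, viewed as a min of finitely many functions continuous on $[0,\infty)^n$ (with $\Psi$ allowed to be $+\infty$ when some $b_i=0$), is upper semicontinuous. Therefore the limit $\bar\veta$ satisfies $\gV_\lambda(\bar\veta)\ge\limsup_m\gV_\lambda(\veta^{(m)})=\gU_n(\lambda)$.

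If $\bar\veta$ has zero entries, I delete them to obtain a shorter positive schedule $\veta'$. Deletion does not change any cost of a set avoiding those indices, and sets using them have infinite cost. Then $\gV_\lambda(\veta')\le\gU_{n'}(\lambda)$ with $n'<n$.

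\textbf{Strict growth.} Given a maximizer $\veta'$ of length $n-1$ with cost $w=\gU_{n-1}(\lambda)$, I append a small step $\epsilon$ to get $\veta'\!\oplus\epsilon$. Every $T$ not containing $n$ has its terminal gap increased by $\epsilon$, so its cost strictly increases. The set $T\ni n$ has cost
\begin{align*}
\frac{2}{\epsilon}\,\gV_2(\veta')\,\lambda,
\end{align*}
which is huge for small $\epsilon$. Hence $\gV_\lambda(\veta'\!\oplus\epsilon)>w$, so $\gU_n(\lambda)>\gU_{n-1}(\lambda)$.

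This strict growth closes the attainment argument. If zeros were deleted, we would get $\gU_n(\lambda)\le\gU_{n'}(\lambda)<\gU_n(\lambda)$, a contradiction. So $\bar\veta$ is positive and is a maximizer.

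To avoid circularity, I would order the induction as follows: first prove finiteness for all $n$, then prove strict growth at step $n$ using attainment at $n-1$, then prove attainment at $n$.

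\textbf{Main obstacle.} I expect the main obstacle to be the upper-semicontinuity and degenerate-limit bookkeeping, in particular checking carefully that zero entries give infinite or irrelevant costs. One subtlety is that a zero step inside a gap simply contributes $0$ to $s_i$, so it is harmless rather than infinite. Deletion therefore preserves the costs of all sets avoiding that index, which is exactly what the argument needs.
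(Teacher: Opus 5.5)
Your proof is correct and follows the paper's structure. Strict growth comes from appending a vanishing step to an $(n-1)$-maximizer, a $1/M$ decay bound keeps maximizing sequences bounded, and deleting zero limit coordinates contradicts strict growth; your upper-semicontinuity formulation is a repackaging of the paper's transfer of a minimizing checkpoint set from $\bar\veta^+$ back to $\veta^{(j)}$. The only real difference is in finiteness: the paper avoids induction by using the explicit set $\{t:\eta_t\ge1\}$ to get $\gV_\lambda(\veta)\le(\lambda+n)[2(n+2)]^n$ (divided by $M$ when $M\ge1$), whereas you bootstrap the bound $\min\{\lambda+nM,\,2K/M\}$ from \cref{lem:ExactCostRecursion}; both work, and your lower bound $m_0>0$ on $M^{(m)}$ is not needed.
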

\begin{proof}[Proof of \cref{lem:ExtremalSchedule}]
    We proceed by induction on $n$, the induction hypothesis being that $\gU_{n'}(\lambda)$ is attained for every $n'<n$ and that $\gU_0(\lambda)<\cdots<\gU_{n-1}(\lambda)$. The claim is immediate for $n=0$.

    \medskip

    We first establish the strict growth of $\gU_n(\lambda)$ in $n$. By the induction hypothesis, $\gU_{n-1}(\lambda)$ is attained by some positive schedule $\xi\in(0,\infty)^{n-1}$. Append a new step $\epsilon>0$ to the schedule and denote the resulting length-$n$ schedule by
    \[
        \xi^{(\epsilon)}:=(\xi_1,\ldots,\xi_{n-1},\epsilon).
    \]
    If the new step is absent from a checkpoint set, then $\epsilon$ is added to the terminal gap. Since $\Psi_\lambda$ is strictly increasing in every gap sum, the resulting cost is strictly higher than the cost of the corresponding checkpoint set for $\xi$, hence strictly larger than $\gV_\lambda(\xi)$. If the new step is a checkpoint, the cost contains the factor $2/\epsilon$ and therefore tends to $+\infty$ as $\epsilon\downarrow0$. Since there are only finitely many checkpoint sets, for sufficiently small $\epsilon>0$ every associated cost is strictly higher than $\gV_\lambda(\xi)=\gU_{n-1}(\lambda)$. Hence \cref{eq:worst_case_cost_strict_growth} follows.

    \medskip

    We next prove finiteness. For an arbitrary positive schedule $\veta\in(0,\infty)^n$, consider the checkpoint set $T_{\ge 1}:=\{t:\eta_t\ge1\}$. Every gap of $T_{\ge 1}$ consists of steps smaller than $1$, so each gap sum is at most $n$. Moreover $\abs {T_{\ge 1}}\le n$ and $b_i(T_{\ge 1};\veta)\ge 1$ for every $i$. Hence
    \begin{align}
        \gV_\lambda(\veta)
        \le \Psi_\lambda(T_{\ge1};\veta)
        \le (\lambda+n)[2(n+2)]^{\abs{T_{\ge1}}}
        \le (\lambda+n)[2(n+2)]^n<\infty.
        \label{eq:worst_case_cost_finite}
    \end{align}
    so $\gU_n(\lambda)<\infty$.

    \medskip

    To prove attainment, take a maximizing sequence
    $\veta^{(j)}\in(0,\infty)^n$ such that
    $\gV_\lambda(\veta^{(j)})\to\gU_n(\lambda)$,
    and put $M^{(j)}:=\max_i\eta_i^{(j)}$. If $M^{(j)}\ge 1$, then $T_{\ge 1}$ contains an index attaining $M^{(j)}$. We obtain a stronger bound than \cref{eq:worst_case_cost_finite}:
    \begin{align*}
        \gV_\lambda(\veta^{(j)})\le\frac{(\lambda+n)\,[2(n+2)]^n}{M^{(j)}}.
    \end{align*}
    Thus $M^{(j)}\to\infty$ along a subsequence would force $\gV_\lambda(\veta^{(j)})$ to tend to $0$. Since $\veta^{(j)}$ is a maximizing sequence,
    \[
        \gV_\lambda(\veta^{(j)})\longrightarrow\gU_n(\lambda)\ge\gU_0(\lambda)
        =\lambda>0.
    \]
    Hence $M^{(j)}$ cannot diverge to $+\infty$ along any subsequence. Therefore $M^{(j)}\le \bar M$ for some finite $\bar M$ and all $j$, and after passing to a subsequence we may assume $\veta^{(j)}\to\bar\veta\in[0,\bar M]^n$.

    Suppose that one or more coordinates of $\bar\veta$ are zero. Delete them and let $\bar \veta^+$ be the remaining positive schedule, of length $n'<n$. Choose a checkpoint set minimizing $\Psi_\lambda(\cdot;\bar\veta^+)$ and use the corresponding indices in $\veta^{(j)}$, leaving all coordinates converging to zero outside the checkpoint set. The checkpoint steps converge to positive limits, so no factor blows up, and the contributions of the vanishing coordinates to the gap tend to $0$. Therefore
    \begin{align*}
        \gU_n(\lambda)=\lim_{j\to\infty}\gV_\lambda(\veta^{(j)})\le\gV_\lambda(\bar\veta^+)\le\gU_{n'}(\lambda)\le\gU_{n-1}(\lambda),
    \end{align*}
    where the last inequality uses the induction hypothesis that $\gU_\cdot(\lambda)$ is increasing below level $n$. This contradicts \cref{eq:worst_case_cost_strict_growth}. Hence $\bar\veta$ lies in the positive orthant.

    \medskip

    Since $\gV_\lambda$ is the minimum of finitely many checkpoint-set costs, each continuous on $(0, \infty)^n$, it is continuous there. Therefore $\gV_\lambda(\bar\veta)=\lim_j\gV_\lambda(\veta^{(j)})=\gU_n(\lambda)$, proving attainment and completing the induction.
\end{proof}

\subsection{Log-submodularity of Cost Function}
\label{subsec:proof_logsubmodularity}

\begin{lemma}
    \label{lem:CostLogSubmodularity}
        Fix $\lambda\ge2$ and a positive schedule $\veta\in(0,\infty)^n$. For any two checkpoint sets $T_1,T_2\subseteq[n]$,
        \begin{align}
            \Psi_\lambda(T_1;\veta)\Psi_\lambda(T_2;\veta)
            \ge
            \Psi_\lambda(T_1\cap T_2;\veta)\Psi_\lambda(T_1\cup T_2;\veta).
            \label{eq:cost_log_submodularity}
        \end{align}
        If $T_1$ and $T_2$ are nonempty and disjoint, then the inequality is strict.
    \end{lemma}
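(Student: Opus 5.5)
The plan is to take logarithms and reduce \cref{eq:cost_log_submodularity} to ordinary submodularity of a set function built from the gap sums, checked through the diminishing-returns property. Write $k=\abs{T}$. By \cref{eq:all_selection_product,eq:cost_and_minimum_cost},
\begin{align*}
    \log\Psi_\lambda(T;\veta)=\sum_{t\in T}\log\frac{2}{\eta_t}+\sum_{i=1}^{k}\log\open{2+s_i(T;\veta)}+\log\open{\lambda+s_{k+1}(T;\veta)}.
\end{align*}
The first sum is modular in $T$, so it cancels on both sides of \cref{eq:cost_log_submodularity}. It therefore suffices to show that
\[
G(T):=\sum_{i\le k}\log(2+s_i)+\log(\lambda+s_{k+1})
\]
is submodular on $2^{[n]}$, with the additional strictness claim.

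\textbf{Marginal gains.} I will use the equivalent form: for $T\subseteq T'$ and $t\notin T'$,
\[
\Delta(t\mid T):=\log\Psi_\lambda(T\cup\{t\})-\log\Psi_\lambda(T)\ \ge\ \Delta(t\mid T').
\]
Adding $t$ to $T$ splits the gap containing $t$. Let that gap have sum $a+\eta_t+c$, where $a$ and $c$ are the sums of the steps strictly left and right of $t$ inside the gap. The gap becomes a non-terminal gap of sum $a$ followed by a gap of sum $c$. The right piece inherits the gap's end constant $\mu$, where $\mu=\lambda$ if the gap is terminal and $\mu=2$ otherwise. Hence
\[
\Delta(t\mid T)=\log\frac{2}{\eta_t}+h_\mu(a,c),\qquad h_\mu(a,c):=\log(2+a)+\log(\mu+c)-\log(\mu+a+\eta_t+c).
\]

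\textbf{Monotonicity of $h_\mu$.} I will check the following by direct computation.
\begin{itemize}[leftmargin=*,nosep]
    \item $\partial_a h_\mu=\frac1{2+a}-\frac1{\mu+a+\eta_t+c}>0$, because $\mu\ge2$ and $\eta_t>0$.
    \item $\partial_c h_\mu=\frac1{\mu+c}-\frac1{\mu+a+\eta_t+c}>0$.
    \item $h_\mu$ is nondecreasing in $\mu$, since $\log(\mu+c)-\log(\mu+c+a+\eta_t)$ is.
\end{itemize}
Now compare $T'\supseteq T$. The gap of $T'$ containing $t$ is contained in the gap of $T$ containing $t$, so $a'\le a$ and $c'\le c$. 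If the $T'$-gap is terminal, then no element of $T'$, hence none of $T$, lies after $t$, so the $T$-gap is terminal too. Thus $\mu'\le\mu$ because $\lambda\ge2$. Combining the three monotonicity facts gives $\Delta(t\mid T)\ge\Delta(t\mid T')$.

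\textbf{Assembling the inequality.} Enumerate $T_2\setminus T_1=\{u_1,\dots,u_m\}$. Telescope by adding $u_1,\dots,u_m$ in turn to $T_1\cap T_2$ and, in parallel, to $T_1$. This yields
\[
\log\Psi_\lambda(T_2)-\log\Psi_\lambda(T_1\cap T_2)\ \ge\ \log\Psi_\lambda(T_1\cup T_2)-\log\Psi_\lambda(T_1),
\]
which is \cref{eq:cost_log_submodularity}.

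\textbf{Strictness.} Suppose $T_1,T_2$ are nonempty and disjoint. Then $m\ge1$ and the first comparison is $\Delta(u_1\mid\emptyset)$ versus $\Delta(u_1\mid T_1)$. The empty set has a single gap $[n]$. Pick any $u\in T_1$; then $u\neq u_1$ lies on one side of $u_1$. Passing from $\emptyset$ to $T_1$ removes $\eta_u>0$ from $a$ or from $c$, so $a'<a$ or $c'<c$. Strict positivity of the partial derivatives then makes this step strict, and therefore the whole inequality is strict.

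\textbf{Main obstacle.} The step needing the most care is the bookkeeping of the terminal versus non-terminal constant: I must verify that $\mu$ can only drop from $\lambda$ to $2$ when passing from $T$ to $T'$, and never the reverse. The hypothesis $\lambda\ge2$ is used exactly there and in $\partial_a h_\mu>0$.
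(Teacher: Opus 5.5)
Your proposal is correct and follows essentially the same route as the paper, which also proves diminishing marginal ratios $\Psi_\lambda(T\cup\{t\};\veta)/\Psi_\lambda(T;\veta)=2(2+\ell)K/\bigl(\eta_t(\ell+\eta_t+K)\bigr)$, shows this ratio is strictly increasing in the left gap sum $\ell$ and in the right parameter $K$, telescopes over $T_2\setminus T_1$, and gets strictness from the first added element of $T_2$ compared against $\emptyset$ versus $T_1$. The only difference is cosmetic: the paper bundles your end constant $\mu$ and right sum $c$ into the single parameter $K=\mu+c$, while you track them separately.
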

\begin{proof}[Proof of \cref{lem:CostLogSubmodularity}]
    It suffices to prove diminishing marginal ratios. Fix a checkpoint set $T\subseteq[n]$ and an index $t\notin T$. Within the gap of $T$ containing $t$, let $\ell$ be the sum of the steps outside $T$ between the nearest checkpoint index to the left of $t$ (or the start of the schedule) and $t$. Let $\rho$ be the sum of the steps outside $T$ between $t$ and the right boundary of that gap, and set
    \begin{align*}
        K:=\begin{cases}
            2+\rho, & \text{if $T$ has a checkpoint index to the right of $t$},\\
            \lambda+\rho, & \text{otherwise}.
        \end{cases}
    \end{align*}
    In particular, $K\ge2$.

    Consider adding $t$ to $T$. If the gap containing $t$ is followed by a checkpoint index, then, after canceling all unchanged factors, the original contribution from this gap is replaced according to
    \[
        2(2+\ell+\eta_t+\rho)
        \quad\longmapsto\quad
        \frac{2(2+\ell)}{\eta_t}\,2(2+\rho).
    \]
    If the gap is terminal, then
    \[
        \lambda+\ell+\eta_t+\rho
        \quad\longmapsto\quad
        \frac{2(2+\ell)}{\eta_t}\,(\lambda+\rho).
    \]
    Thus, in both cases,
    \begin{align}
        \frac{\Psi_\lambda(T\cup\{t\};\veta)}{\Psi_\lambda(T;\veta)}
        =
        \frac{2(2+\ell)K}{\eta_t(\ell+\eta_t+K)}.
        \label{eq:cost_marginal_ratio}
    \end{align}
    Differentiating the logarithm of the right-hand side with respect to $\ell$ and $K$ gives
    \begin{align*}
        \frac{1}{2+\ell}-\frac{1}{\ell+\eta_t+K}
        &=
        \frac{\eta_t+K-2}{(2+\ell)(\ell+\eta_t+K)}
        >0, \nonumber\\
        \frac{1}{K}-\frac{1}{\ell+\eta_t+K}
        &=
        \frac{\ell+\eta_t}{K(\ell+\eta_t+K)}
        >0,
    \end{align*}
    since $K\ge2$ and $\eta_t>0$. Hence the marginal ratio in \cref{eq:cost_marginal_ratio} is strictly increasing in both $\ell$ and $K$.

    Now let $T\subseteq T'$ and $t\notin T'$. The additional checkpoint indices in $T'\setminus T$ can only move the two boundaries of the gap containing $t$ toward $t$. In particular, $\ell_{T'}\le\ell_T$. For the right boundary, if the gap remains of the same type, then $\rho_{T'}\le\rho_T$, and hence $K_{T'}\le K_T$. If a terminal gap for $T$ becomes an interior gap for $T'$, then $K_{T'}=2+\rho_{T'}\le 2+\rho_T\le \lambda+\rho_T=K_T$.
    Therefore the monotonicity of \cref{eq:cost_marginal_ratio} gives
    \begin{align}
        \frac{\Psi_\lambda(T\cup\{t\};\veta)}{\Psi_\lambda(T;\veta)}
        \ge
        \frac{\Psi_\lambda(T'\cup\{t\};\veta)}{\Psi_\lambda(T';\veta)}.
        \label{eq:cost_diminishing_marginal}
    \end{align}

    We now apply \cref{eq:cost_diminishing_marginal} to arbitrary $T_1,T_2\subseteq[n]$. Write
    \[
        T_2\setminus T_1=\{i_1,\ldots,i_L\},
    \]
    in any order, and for $j=1,\ldots,L$ define
    \[
        X_j:=(T_1\cap T_2)\cup\{i_1,\ldots,i_{j-1}\},
        \qquad
        X'_j:=T_1\cup\{i_1,\ldots,i_{j-1}\}.
    \]
    Then $X_j\subseteq X'_j$ and $i_j\notin X'_j$, so \cref{eq:cost_diminishing_marginal} gives
    \begin{align*}
        \frac{\Psi_\lambda(X_j\cup\{i_j\};\veta)}
             {\Psi_\lambda(X_j;\veta)}
        \ge
        \frac{\Psi_\lambda(X'_j\cup\{i_j\};\veta)}
             {\Psi_\lambda(X'_j;\veta)}.
    \end{align*}
    Multiplying these inequalities over $j=1,\ldots,L$ and telescoping yields
    \begin{align*}
        \frac{\Psi_\lambda(T_2;\veta)}
             {\Psi_\lambda(T_1\cap T_2;\veta)}
        \ge
        \frac{\Psi_\lambda(T_1\cup T_2;\veta)}
             {\Psi_\lambda(T_1;\veta)},
    \end{align*}
    which is equivalent to \cref{eq:cost_log_submodularity}.

    We still need to prove strictness when $T_1$ and $T_2$ are nonempty and disjoint. Add the elements of $T_2$ in any order to $\emptyset$ and to $T_1$ in parallel. Consider the first added element $t\in T_2$. Since $T_1$ is nonempty and $t\notin T_1$, at least one boundary parameter for the gap containing $t$ is strictly smaller under $T_1$ than under $\emptyset$.

    Indeed, if $T_1$ contains an index to the left of $t$, then the nearest such checkpoint index and its positive step-size are excluded from the left gap sum, so
    \begin{align*}
        \ell_{T_1}<\ell_{\emptyset}.
    \end{align*}
    Otherwise, every element of $T_1$ lies to the right of $t$. In this case, the gap is terminal under $\emptyset$ but has a checkpoint right boundary under $T_1$. If $t'>t$ is the nearest checkpoint index in $T_1$, then
    \begin{align*}
        K_{T_1}
        =
        2+\sum_{i=t+1}^{t'-1}\eta_i
        <
        \lambda+\sum_{i=t+1}^{n}\eta_i
        =
        K_{\emptyset},
    \end{align*}
    where the inequality is strict because $\eta_{t'}>0$ and $\lambda\ge2$.

    Thus at least one of the two parameters $\ell$ and $K$ decreases strictly, while neither increases. Since the marginal ratio in \cref{eq:cost_marginal_ratio} is strictly increasing in both parameters, the first marginal comparison is strict. Multiplying the marginal inequalities as above therefore gives
    \begin{align*}
        \Psi_\lambda(T_1;\veta)\Psi_\lambda(T_2;\veta)
        >
        \Psi_\lambda(\emptyset;\veta)\Psi_\lambda(T_1\cup T_2;\veta),
    \end{align*}
    which is precisely the strict form of \cref{eq:cost_log_submodularity}.
\end{proof}

\subsection{\texorpdfstring{Properties of the composition map $\gQ$}{Properties of the composition map Q}}
\label{subsec:lemmas_Q}

Recall that
\begin{align*}
    \gQ(x,y):=\frac{x+y-2+\sqrt{(x+y-2)^2+8xy}}{2}.
\end{align*}
It is the unique positive solution of
\begin{align}
    w(w-x-y+2)=2xy.
    \label{eq:binary_composition_equation}
\end{align}
Moreover,
\begin{align*}
    \alpha_\star:=\log_2(1+\sqrt3)=1.449984313\ldots, \qquad
    \nu_\star:=\frac{1}{\alpha_\star}=0.689662633\ldots.
\end{align*}

\subsubsection{Elementary properties}

\begin{lemma}[Elementary properties of $\gQ$]
\label{lem:QProperties}
    For all $x,y\ge 2$, the value $w=\gQ(x,y)$ satisfies $w>\max\{x,y\}\ge 2$, and $\gQ$ is strictly increasing in each argument.
\end{lemma}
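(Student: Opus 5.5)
We prove both claims directly from the defining quadratic \cref{eq:binary_composition_equation}. Write $p:=x+y-2$, so that $\gQ(x,y)$ is the larger root of $\phi(w):=w^2-pw-2xy$. Since $\phi(0)=-2xy<0$, the polynomial $\phi$ has exactly one positive root. Moreover, $\phi$ is negative on the whole interval between its two roots, and the smaller root is negative.

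For the bound $w>\max\{x,y\}$, it therefore suffices to show $\phi(\max\{x,y\})<0$. By symmetry we may assume $x\ge y$. Then
\begin{align*}
    \phi(x)=x^2-(x+y-2)x-2xy=x\,(2-3y).
\end{align*}
This is negative because $x>0$ and $y\ge2$ give $3y>2$. Hence $x$ lies strictly between the two roots of $\phi$, and so $w>x=\max\{x,y\}\ge2$.

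For strict monotonicity, the argument is symmetric, so we treat $x$. We differentiate the identity $w^2-(x+y-2)w-2xy=0$ implicitly in $x$, which gives
\begin{align*}
    \partial_x w\,\bigl(2w-(x+y-2)\bigr)=w+2y.
\end{align*}
The right-hand side is positive. On the left, $2w-p=\sqrt{p^2+8xy}>0$ by the explicit formula for $\gQ$. Hence $\partial_x w>0$, so $\gQ$ is strictly increasing in $x$, and likewise in $y$.

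As an alternative to implicit differentiation, one can use the explicit formula. The term $(x+y-2)/2$ is increasing in $x$, and $(x+y-2)^2+8xy$ is increasing in $x$ for $x,y\ge2$ because its $x$-derivative $2(x+y-2)+8y$ is positive. Each summand in the formula is therefore increasing, and so is $\gQ$.

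None of these steps is a real obstacle. The only point needing care is identifying $\gQ$ as the larger root, so that a negative value of $\phi$ at $\max\{x,y\}$ places that point below $\gQ(x,y)$.
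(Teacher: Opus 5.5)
Your proof is correct and follows essentially the same route as the paper. Both arguments evaluate the quadratic at $\max\{x,y\}$ to get $x(2-3y)<0$, and then differentiate implicitly to obtain $(2w-x-y+2)\,\partial_x w = w+2y$; the only difference is that you certify $2w-(x+y-2)>0$ via the square root in the explicit formula for $\gQ$, whereas the paper uses $w-x-y+2 = 2xy/w > 0$.
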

\begin{proof}[Proof of \cref{lem:QProperties}]
    Let $\pi(w):=w^2-(x+y-2)w-2xy$, so that $\gQ(x,y)$ is its larger root. Then $\pi(x)=x^2-x^2-xy+2x-2xy=x(2-3y)<0$ since $y\ge 2$, and symmetrically $\pi(y)<0$. As $\pi$ is an upward parabola, its larger root exceeds $\max\{x,y\}$. Differentiating $\pi(w)=0$ implicitly in $x$ gives
    \[
    (2w-x-y+2)\,\partial_x w=w+2y.
    \]
    From \cref{eq:binary_composition_equation} and $w>0$ we get $w-x-y+2=2xy/w>0$, so $2w-x-y+2>w>0$. Hence $\partial_x w>0$, and symmetrically $\partial_y w>0$.
\end{proof}

\subsubsection{Power-subadditivity}

\begin{lemma}[Power-subadditivity of $\gQ$]
    \label{lem:QPowerSubadditivity}
        For all $x, y\ge 2$,
        \begin{align}
            \gQ(x,y)^{\nu_\star}<x^{\nu_\star}+y^{\nu_\star}.
            \label{eq:q_power_subadditivity}
        \end{align}
    \end{lemma}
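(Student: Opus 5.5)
The plan is to use homogeneity to reduce \cref{eq:q_power_subadditivity} to a scale-free inequality with one free variable, plus a strictly positive correction term coming from the constant ``$-2$'' in $\gQ$.

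\textbf{Step 1 (root comparison).} Fix $x,y\ge 2$ and set $W:=(x^{\nu_\star}+y^{\nu_\star})^{\alpha_\star}>0$; the claim is exactly $\gQ(x,y)<W$. As in the proof of \cref{lem:QProperties}, $\gQ(x,y)$ is the larger root of the upward parabola $\pi(w)=w^2-(x+y-2)w-2xy$. Also $\pi(0)=-2xy<0$, so $0$ lies strictly between the two roots. Hence, for $W>0$, $\gQ(x,y)<W$ is equivalent to $\pi(W)>0$, that is,
\[
W^2-(x+y)W+2W-2xy>0 .
\]

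\textbf{Step 2 (normalization).} Write $x=Wa$ and $y=Wb$ with $a,b\in(0,1)$ and $a^{\nu_\star}+b^{\nu_\star}=1$. Dividing by $W^2$, the required inequality becomes
\[
g(a,b)+\frac{2}{W}>0,\qquad g(a,b):=1-a-b-2ab .
\]
Since $2/W>0$, it suffices to prove the non-strict inequality $g(a,b)\ge 0$ on the curve $a^{\nu_\star}+b^{\nu_\star}=1$ with $a,b\in(0,1)$. This is where the specific value of $\nu_\star$ enters.

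\textbf{Step 3 (the calibration points).} Set $u:=a^{\nu_\star}$ and $v:=b^{\nu_\star}$, so $u+v=1$ and the target is
\[
h(u):=u^{\alpha_\star}+v^{\alpha_\star}+2(uv)^{\alpha_\star}\le 1,\qquad v=1-u .
\]
\begin{itemize}
\item At the symmetric point $u=v=\tfrac12$ we have $a=2^{-\alpha_\star}=1/(1+\sqrt3)=(\sqrt3-1)/2$. A direct computation gives $1-2a-2a^2=1-(\sqrt3-1)-(2-\sqrt3)=0$, so equality holds exactly there. This is what forces the choice $(1+\sqrt3)^{\nu_\star}=2$.
\item At the endpoints $u\to 0,1$ we get $h\to1$.
\item Near an endpoint, with $u=1-\varepsilon$, we have $h\approx 1-\alpha_\star\varepsilon+O(\varepsilon^{\alpha_\star})<1$ because $\alpha_\star>1$.
\end{itemize}

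\textbf{Main obstacle: the one-variable inequality $h\le 1$ on $[0,1]$.} It is tight both at $u=\tfrac12$ and at the endpoints. I would use the symmetry $u\leftrightarrow 1-u$ and write $u=\tfrac{1+s}{2}$ with $s\in[0,1]$. I would then show that $h$ has no interior maximum other than $s=0$.

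\begin{itemize}
\item \emph{Near $s=0$.} Compute $h'(s)$ and check that $h''(0)<0$, so $s=0$ is a local maximum. This is a numeric check using $\alpha_\star\approx1.45$.
\item \emph{Away from $s=0$.} Show that $h'(s)$ changes sign only once on $(0,1)$. I would first try writing $h'(s)=0$ as $\phi(u)=\phi(v)$ for the auxiliary function
\[
\phi(u)=u^{\alpha_\star-1}\bigl(1+2v^{\alpha_\star}\bigr)
\]
(after clearing the common factors), and then argue monotonicity of the resulting ratio.
\item \emph{Fallback.} If this is messy, split the range. On $s\in[0,s_0]$, use a Taylor bound with explicit remainder. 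On $s\in[s_0,1]$, use crude bounds: $(uv)^{\alpha_\star}\le uv$, and convexity of $t\mapsto t^{\alpha_\star}$ to bound $u^{\alpha_\star}+v^{\alpha_\star}\le 1-c\,uv$ for an explicit $c>2$ that is valid on that range.
\end{itemize}

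Once $g\ge0$ is established, strictness follows from the $2/W$ term, and \cref{eq:q_power_subadditivity} holds.
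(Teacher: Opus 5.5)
Your Steps 1--3 are correct, and they reduce the lemma to exactly the paper's one-variable inequality: your $h$ is the paper's $\gG(u)=u^{\alpha_\star}+(1-u)^{\alpha_\star}+2[u(1-u)]^{\alpha_\star}$. Your direct root comparison replaces the paper's contradiction argument with $\sigma\le 1$, which is a mild simplification.

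\textbf{The gap.} The inequality $h\le 1$ on $[0,1]$ is the whole analytic content of the lemma, and you leave it unproved. Both routes you sketch fail as stated.
\begin{itemize}
\item[(i)] The ratio route. $h'(u)$ has the sign of $R(u)-1$, where $R(u):=\phi(u)/\phi(1-u)$. We have $R(\tfrac12)=1$. Since $h(\tfrac12)=h(1)=1$ and $h$ is nonconstant, Rolle gives some $u^*\in(\tfrac12,1)$ with $R(u^*)=1$. So $R$ is not monotone on $[\tfrac12,1)$, and monotonicity of the ratio cannot certify a single sign change.
\item[(ii)] The fallback. It fails on any range containing $s=1$. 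With $v=\varepsilon\to 0$ we have $u^{\alpha_\star}+v^{\alpha_\star}=1-\alpha_\star\varepsilon+O(\varepsilon^{\alpha_\star})$ and $uv\approx\varepsilon$. Hence $u^{\alpha_\star}+v^{\alpha_\star}\le 1-c\,uv$ forces $c\le\alpha_\star<2$. Worse, after the substitution $(uv)^{\alpha_\star}\le uv$, your upper bound is $1+(2-\alpha_\star)\varepsilon+o(\varepsilon)>1$. The substitution discards exactly the $o(\varepsilon)$ smallness of $2(uv)^{\alpha_\star}$ that your own endpoint expansion relies on.
\end{itemize}

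\textbf{The missing idea.} Take the difference rather than the ratio, and divide out the right common factor. With $\bar\alpha:=\alpha_\star-1$, one has $h'(u)=\alpha_\star u^{\bar\alpha}(1-u)^{\bar\alpha}\varphi(u)$, where
\[
\varphi(u)=(1-u)^{-\bar\alpha}-u^{-\bar\alpha}+2(1-2u),
\]
that is, $\varphi(u)=\psi(1-u)-\psi(u)$ with $\psi(t)=t^{-\bar\alpha}+2t$.
\begin{itemize}
\item $\varphi''(u)=\bar\alpha(\bar\alpha+1)\bigl((1-u)^{-\bar\alpha-2}-u^{-\bar\alpha-2}\bigr)<0$ on $(0,\tfrac12)$, so $\varphi$ is strictly concave there.
\item $\varphi(0^+)=-\infty$ and $\varphi(\tfrac12)=0$.
\item $\varphi'(\tfrac12)=\bar\alpha\,2^{\bar\alpha+2}-4<0$, since $\bar\alpha<\tfrac12$.
\end{itemize}
A strictly concave function with these boundary data changes sign exactly once on $(0,\tfrac12)$. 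Therefore $h$ decreases and then increases on $[0,\tfrac12]$, so its maximum there is $\max\{h(0),h(\tfrac12)\}=1$, and symmetry finishes the argument.

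Your check $h''(0)<0$ is equivalent to $\varphi'(\tfrac12)<0$, but it is only the local ingredient. The concavity of $\varphi$ is what upgrades it to the global statement.
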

\begin{proof}[Proof of \cref{lem:QPowerSubadditivity}]
    Let $w=\gQ(x,y)$ and normalize by putting $\hat x:=x/w$ and $\hat y:=y/w$, which lie in $(0,1)$ by \cref{lem:QProperties}. Dividing \cref{eq:binary_composition_equation} by $w^2$ gives
    \begin{align}
        \hat x+\hat y+2\hat x\hat y=1+\frac{2}{w}>1.
        \label{eq:composition_normalized_identity}
    \end{align}
    We show that $\hat x^{\nu_\star}+\hat y^{\nu_\star}>1$, which is \cref{eq:q_power_subadditivity} after multiplying by $w^{\nu_\star}$.

    Suppose instead that $\sigma:=\hat x^{\nu_\star}+\hat y^{\nu_\star}\le1$, and set $u:=\hat x^{\nu_\star}/\sigma\in[0,1]$. Since $1/\nu_\star=\alpha_\star$,
    \begin{align*}
        \hat x=\sigma^{\alpha_\star}u^{\alpha_\star},\qquad
        \hat y=\sigma^{\alpha_\star}(1-u)^{\alpha_\star}.
    \end{align*}
    Hence, using $\sigma\le1$,
    \begin{align}
        \hat x+\hat y+2\hat x\hat y
        &=
        \sigma^{\alpha_\star}\left[u^{\alpha_\star}+(1-u)^{\alpha_\star}
        +2\sigma^{\alpha_\star}[u(1-u)]^{\alpha_\star}\right]
        \ \le\
        \sigma^{\alpha_\star}\gG(u),
        \label{eq:composition_g_reduction}
    \end{align}
    where $\gG(u):=u^{\alpha_\star}+(1-u)^{\alpha_\star}+2[u(1-u)]^{\alpha_\star}$.

    We claim that $\gG\le1$ on $[0,1]$. By symmetry, it suffices to consider $u\in[0,1/2]$. Put $\bar\alpha:=\alpha_\star-1\in(0,1/2)$. Differentiation and factoring out $\alpha_\star u^{\bar\alpha}(1-u)^{\bar\alpha}>0$ gives
    \begin{align*}
        \gG'(u)=\alpha_\star u^{\bar\alpha}(1-u)^{\bar\alpha}\varphi(u),\qquad
        \varphi(u):=(1-u)^{-\bar\alpha}-u^{-\bar\alpha}+2(1-2u).
    \end{align*}
    Moreover,
    \begin{align*}
        \varphi''(u)=\bar\alpha(\bar\alpha+1)\left((1-u)^{-\bar\alpha-2}-u^{-\bar\alpha-2}\right)<0,\qquad 0<u<\frac12,
    \end{align*}
    so $\varphi$ is strictly concave on $(0,1/2)$. We also have $\varphi(0+)=-\infty$, $\varphi(1/2)=0$, and
    \begin{align*}
        \varphi'(1/2)=\bar\alpha\,2^{\bar\alpha+2}-4<0,
    \end{align*}
    since $\bar\alpha<1/2$ gives $\bar\alpha2^{\bar\alpha+2}<\tfrac12\cdot2^{5/2}=2\sqrt2<4$. A strictly concave function with $\varphi(0+)=-\infty$, $\varphi(1/2)=0$ and $\varphi'(1/2)<0$ is negative on an initial interval and positive afterwards. Hence $\gG$ first decreases and then increases on $[0,1/2]$, and its maximum there is attained at an endpoint. Clearly $\gG(0)=1$, while with $2^{\alpha_\star}=1+\sqrt3$,
    \begin{align*}
        \gG(1/2)=\frac{2}{2^{\alpha_\star}}+\frac{2}{(2^{\alpha_\star})^2}=\frac{2\cdot2^{\alpha_\star}+2}{(2^{\alpha_\star})^2}=\frac{4+2\sqrt 3}{4+2\sqrt 3}.
    \end{align*}
    Thus $\gG(u)\le1$ on $[0,1]$.

    Returning to \cref{eq:composition_g_reduction}, we obtain $\hat x+\hat y+2\hat x\hat y\le\sigma^{\alpha_\star}\le1$, contradicting \cref{eq:composition_normalized_identity}. Therefore $\sigma>1$, as claimed.
\end{proof}

\subsection{\texorpdfstring{Proof of \cref{lem:WorstCaseCostBound}}{Proof of Lem. 3.2}}
\label{subsec:proof_worst_case_cost}

We restate the lemma for readability.

\lemWorstCaseCostBound*

The proof now follows the sketch established in \cref{sec:nonanytime}.
We first use the extremal-schedule and log-submodularity lemmas, together with monotonicity of $\gQ$, to prove the recursive estimate in \cref{eq:recursive_worst_case_cost_bound}.
We then combine that estimate with power-subadditivity of $\gQ$ to prove \cref{eq:uniform_worst_case_cost_bound}.

\begin{proof}[Proof of \cref{lem:WorstCaseCostBound}]
    By \cref{lem:ExtremalSchedule}, choose a positive maximizer $\veta^\star\in(0,\infty)^n$ and write
    \begin{align*}
        w:=\gV_\lambda(\veta^\star)=\gU_n(\lambda).
    \end{align*}
    We call a checkpoint set $T\subseteq[n]$ \emph{tight for $\veta^\star$} if
    \[
        \Psi_\lambda(T;\veta^\star)=w,
    \]
    and write
    \begin{align*}
        \gM:=\{T\subseteq[n]:\Psi_\lambda(T;\veta^\star)=w\}
    \end{align*}
    for the family of tight checkpoint sets for $\veta^\star$. Throughout the proof, we use the fact that there are finitely many checkpoint sets and that the cost associated with each checkpoint set is continuous in $\veta$. Hence, under a sufficiently small perturbation of $\veta^\star$, the cost of every non-tight checkpoint set retains positive slack above $w$.

    \medskip

    We first show that $\gM$ is closed under intersection and union. By \cref{lem:CostLogSubmodularity}, the cost function $\Psi_\lambda(T;\veta)$ is log-submodular in $T$.

    Thus, if $T_1,T_2\in\gM$, then
    \begin{align*}
        w^2
        &=
        \Psi_\lambda(T_1;\veta^\star)\Psi_\lambda(T_2;\veta^\star) \ge
        \Psi_\lambda(T_1\cap T_2;\veta^\star)
        \Psi_\lambda(T_1\cup T_2;\veta^\star)
        \ge
        w^2,
    \end{align*}
    where the last inequality follows from the definition of $w$ as the minimum checkpoint-set cost. It implies that both inequalities are equalities. Hence $T_1\cap T_2,T_1\cup T_2\in\gM$. Moreover, two nonempty disjoint checkpoint sets cannot both be tight for $\veta^\star$; otherwise, the strict part of \cref{lem:CostLogSubmodularity} would make the first inequality strict.

    \medskip

    We next show that the empty checkpoint set is tight for $\veta^\star$. Suppose otherwise. Since $\gM$ is finite and closed under intersections, the set
    \begin{align*}
        T_{\cap}:=\bigcap_{T\in\gM}T
    \end{align*}
    is a tight checkpoint set for $\veta^\star$ and is nonempty. Multiply every step $\eta_i^\star$ with $i\in T_{\cap}$ by a common factor $1-\epsilon$, where $\epsilon>0$ is sufficiently small. Every tight checkpoint set for $\veta^\star$ contains all indices of $T_{\cap}$. Hence each perturbed coordinate is a checkpoint step in every such cost: it appears only through a reciprocal checkpoint-step factor and does not contribute to any gap sum. Decreasing these coordinates therefore strictly increases the cost of every tight checkpoint set for $\veta^\star$. Since the costs of all non-tight checkpoint sets retain positive slack above $w$ for sufficiently small $\epsilon$, every checkpoint-set cost for the perturbed schedule is strictly higher than $w$. Thus the perturbed schedule $\widetilde\veta$ satisfies
    \[
        \gV_\lambda(\widetilde\veta)>w=\gU_n(\lambda),
    \]
    contradicting the definition of $\gU_n(\lambda)$. Hence $\emptyset\in\gM$.

    \medskip

    There must also be a nonempty tight checkpoint set for $\veta^\star$. Otherwise, $\emptyset$ is the unique tight checkpoint set for $\veta^\star$. Increasing one coordinate $\eta_t^\star$ by a sufficiently small amount strictly increases the empty checkpoint-set cost
    \[
        \Psi_\lambda(\emptyset;\veta^\star)
        =
        \lambda+\sum_{i=1}^n\eta_i^\star,
    \]
    while the cost of every non-tight checkpoint set remains above $w$ by continuity.
    This again produces a schedule whose $\gV_\lambda$ is strictly larger than $\gU_n(\lambda)$, a contradiction. Therefore $\gM$ contains a nonempty tight checkpoint set for $\veta^\star$.

    \medskip

    Choose $T_{\min}\in\gM$ inclusion-minimal among the nonempty tight checkpoint sets for $\veta^\star$. Every other nonempty tight checkpoint set $T$ for $\veta^\star$ intersects $T_{\min}$ by the closure and strictness established above. Since $T\cap T_{\min}$ is also a tight checkpoint set for $\veta^\star$ and is a nonempty subset of $T_{\min}$, minimality gives
    \[
        T\cap T_{\min}=T_{\min}.
    \]
    Therefore $T_{\min}\subseteq T$ for every nonempty tight checkpoint set $T$ for $\veta^\star$. Suppose $\abs{T_{\min}}\ge2$. We perturb two coordinates indexed by $T_{\min}$ so that their sum increases while their product decreases. Let $\eta^\star_{\min}$ and $\eta^\star_{\max}$ denote the smallest and largest values among $\{\eta_i^\star:i\in T_{\min}\}$.

    Choose indices attaining these two values. If $\eta^\star_{\min}<\eta^\star_{\max}$, replace these two coordinates by
    \[
        \eta^\star_{\min}-\epsilon,
        \qquad
        \eta^\star_{\max}+\gamma\epsilon,
    \]
    where
    \[
        1<\gamma<\frac{\eta^\star_{\max}}{\eta^\star_{\min}}.
    \]
    For all sufficiently small $\epsilon>0$, the sum increases by $(\gamma-1)\epsilon>0$, whereas the product changes by
    \[
        \epsilon\bigl(\gamma\eta^\star_{\min}-\eta^\star_{\max}\bigr)-\gamma\epsilon^2<0.
    \]

    If all coordinates indexed by $T_{\min}$ are equal to some $\rho>0$, replace two of them by
    \[
        \rho-\epsilon,
        \qquad
        \rho+\epsilon+\frac{\epsilon^2}{2\rho}.
    \]
    Their sum increases by $\epsilon^2/(2\rho)>0$, while their product becomes
    \begin{align*}
        (\rho-\epsilon) \left(\rho+\epsilon+\frac{\epsilon^2}{2\rho}\right)
        =
        \rho^2-\frac{\epsilon^2}{2} -\frac{\epsilon^3}{2\rho}
        <
        \rho^2.
    \end{align*}

    Under either perturbation, the empty checkpoint-set cost $\lambda+\sum_{i=1}^n\eta_i^\star$ strictly increases. Every nonempty tight checkpoint set for $\veta^\star$ contains all indices of $T_{\min}$. Hence the dependence of the cost of every nonempty tight checkpoint set for $\veta^\star$ on the two perturbed coordinates is only through the reciprocal of their product, so every such cost also strictly increases. The costs of all non-tight checkpoint sets retain positive slack above $w$ for a sufficiently small perturbation, contradicting maximality. Therefore
    \[
        T_{\min}=\{\tau\}
    \]
    for some $\tau\in[n]$.

    \medskip

    Since both $\emptyset$ and $\{\tau\}$ are tight checkpoint sets for $\veta^\star$, define
    \begin{align*}
        x:=2+\sum_{i<\tau}\eta_i^\star,\qquad
        y:=\lambda+\sum_{i>\tau}\eta_i^\star,\qquad
        \mu:=\eta_\tau^\star.
    \end{align*}
    Then $x\ge2$ and $y\ge\lambda\ge2$. Their costs satisfy
    \begin{align}
        w
        &=
        \Psi_\lambda(\emptyset;\veta^\star)
        =
        x+y+\mu-2, \nonumber\\
        w
        &=
        \Psi_\lambda(\{\tau\};\veta^\star)
        =
        \frac{2xy}{\mu}.
        \label{eq:tight_empty_singleton_balance}
    \end{align}
    Eliminating $\mu$ from these two identities gives
    \[
        w^2=(x+y-2)w+2xy \quad \implies \quad w(w-x-y+2)=2xy.
    \]
    Hence $w=\gQ(x,y)$.

    \medskip

    We now obtain the recursive estimate. Apply \cref{lem:ExactCostRecursion} at the index $\tau$. Since the recursion takes the minimum over all choices of the splitting index, the candidate corresponding to $\tau$ must have cost at least $w$. Therefore
    \begin{align*}
        w
        \le
        \frac{2}{\mu}
        \gV_2(\veta^\star_{1:\tau-1})
        \gV_\lambda(\veta^\star_{\tau+1:n}).
    \end{align*}
    On the other hand, each child term is bounded above by its empty checkpoint-set cost:
    \begin{align*}
        \gV_2(\veta^\star_{1:\tau-1})\le x,
        \qquad
        \gV_\lambda(\veta^\star_{\tau+1:n})\le y.
    \end{align*}
    Hence
    \begin{align*}
        w
        \le
        \frac{2}{\mu}
        \gV_2(\veta^\star_{1:\tau-1})
        \gV_\lambda(\veta^\star_{\tau+1:n})
        \le
        \frac{2xy}{\mu}
        =
        w,
    \end{align*}
    where the last equality follows from \cref{eq:tight_empty_singleton_balance}. Thus both inequalities are equalities. Since the two child terms are positive and are respectively bounded above by $x$ and $y$, equality of their product with $xy$ forces
    \begin{align*}
        \gV_2(\veta^\star_{1:\tau-1})=x,
        \qquad
        \gV_\lambda(\veta^\star_{\tau+1:n})=y.
    \end{align*}
    Writing $i:=\tau-1$, $j:=n-\tau$, we obtain $x\le\gU_i(2)$ and $y\le\gU_j(\lambda)$. Since $\gQ$ is increasing in each argument by \cref{lem:QProperties},
    \begin{align*}
        \gU_n(\lambda)=w=\gQ(x,y)
        \le
        \gQ\bigl(\gU_i(2),\gU_j(\lambda)\bigr).
    \end{align*}
    Maximizing over all $i,j\ge0$ with $i+j=n-1$ proves \cref{eq:recursive_worst_case_cost_bound}.

    \medskip

    To obtain the power bound, we proceed by induction on $n$. The claim holds for $n=0$. For $n\ge1$, \cref{eq:recursive_worst_case_cost_bound,lem:QPowerSubadditivity} and the induction hypothesis give
    \begin{align*}
        \gU_n(\lambda)^{\nu_\star}
        &\le
        \max_{i+j=n-1}\left(\gU_i(2)^{\nu_\star}+\gU_j(\lambda)^{\nu_\star}\right) \nonumber\\
        &\le
        \max_{i+j=n-1}\left(2^{\nu_\star}+2^{\nu_\star}i+\lambda^{\nu_\star}+2^{\nu_\star}j\right)
        =
        \lambda^{\nu_\star}+2^{\nu_\star}n,
    \end{align*}
    proving \cref{eq:uniform_worst_case_cost_bound}.
\end{proof}

%% file: sec/904AnytimeProofs.tex
\section{Proofs: Anytime Lower Bound}
\label{sec:anytime_proofs}

The proof of \cref{lem:ImprovedRecordSumBound} requires \cref{lem:OrderSensitiveCount,lem:OrderSensitiveSumMaximum}. So we first provide their proofs below.

\begin{restatable}[Bound on number of large steps]{lemma}{lemOrderSensitiveCount}
\label{lem:OrderSensitiveCount}
    For $m\ge 1$, let $\vxi=(\xi_1,\ldots,\xi_m)\in (0,\infty)^m$.
    Take any $M\ge\max_{t\in[m]} \xi_t$ and assume $\gV_2(\vxi)\ge\frac{M}{2}$.
    Define the number of step-sizes exceeding $h>0$ by
    \begin{align*}
        \gN_\vxi(h):=\bigl|\{t\in[m]:\xi_t>h\}\bigr|.
    \end{align*}
    Then, for every $0<h\le M$,
    \begin{align*}
        \gN_\vxi(h)+1 \le 4^{\nu_\star}(m+1)h^{-\nu_\star}.
    \end{align*}
\end{restatable}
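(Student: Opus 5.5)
The plan is to bound $\gV_2(\vxi)$ from above by cutting $\vxi$ at the large steps and applying \cref{lem:WorstCaseCostBound} to each piece in between. The hypothesis $\gV_2(\vxi)\ge M/2$ then supplies a matching lower bound, and comparing the two yields the count. Fix $0<h\le M$. Let $T_h:=\{t\in[m]:\xi_t>h\}=\{t_1<\dots<t_k\}$, so $k=\gN_\vxi(h)$. These indices split $[m]$ into $k+1$ consecutive (possibly empty) gaps of lengths $n_1,\dots,n_{k+1}\ge0$. Since the gaps together with the $k$ checkpoints cover $[m]$, we have $\sum_{i=1}^{k+1}(1+n_i)=m+1$.

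\textbf{Upper bound via the recursion.} Apply \cref{lem:ExactCostRecursion} (with $\lambda=2$), keeping only the branch that splits at $t=t_k$. This gives $\gV_2(\vxi)\le \frac{2}{\xi_{t_k}}\gV_2(\vxi_{1:t_k-1})\gV_2(\vxi_{t_k+1:m})$. Iterating on the left prefix at $t_{k-1},\dots,t_1$ produces
\begin{align*}
    \gV_2(\vxi)\le \prod_{i=1}^{k}\frac{2}{\xi_{t_i}}\;\prod_{i=1}^{k+1}\gV_2(\text{gap}_i).
\end{align*}
Empty gaps contribute $\gV_2(\emptyset)=2=\gU_0(2)$, so they are consistent with the general bound. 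By \cref{eq:uniform_worst_case_cost_bound} with $\lambda=2$, each gap satisfies $\gV_2(\text{gap}_i)\le\gU_{n_i}(2)\le(2^{\nu_\star}(1+n_i))^{\alpha_\star}=2(1+n_i)^{\alpha_\star}$. Using $\xi_{t_i}>h$ for every checkpoint, this yields
\begin{align*}
    \gV_2(\vxi)\le 2\Big(\frac{4}{h}\Big)^{k}\prod_{i=1}^{k+1}(1+n_i)^{\alpha_\star}\le 2\Big(\frac4h\Big)^k\Big(\frac{m+1}{k+1}\Big)^{\alpha_\star(k+1)},
\end{align*}
where the last step is AM--GM applied under the constraint $\sum_i(1+n_i)=m+1$.

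\textbf{Combining.} The hypothesis gives $M/2\le\gV_2(\vxi)$. Multiplying through by $(h/4)^k/2$ gives $\frac{M}{4}\big(\frac h4\big)^k\le\big(\frac{m+1}{k+1}\big)^{\alpha_\star(k+1)}$. Since $h\le M$, the left side is at least $(h/4)^{k+1}$. Taking $(k+1)$-th roots gives $h/4\le\big(\frac{m+1}{k+1}\big)^{\alpha_\star}$. Raising to the power $\nu_\star=1/\alpha_\star$ and rearranging gives $k+1\le 4^{\nu_\star}(m+1)h^{-\nu_\star}$, which is the claim.

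\textbf{Main obstacle.} The step I expect to need the most care is the iterated use of \cref{lem:ExactCostRecursion}. The left factor always carries parameter $2$, which matches our $\lambda=2$. The right factor at each stage is the $\lambda$-cost of a suffix; here $\lambda=2$ as well, so every factor is a $\gV_2$ and the uniform bound \cref{eq:uniform_worst_case_cost_bound} applies to each gap with the same constant.

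The other delicate point is the exponent bookkeeping. The lower bound $M/2$ must absorb exactly one extra factor of $h/4$, so that both sides carry the exponent $k+1$. This works only because $h\le M$ and because the hypothesis constant $1/2$ matches the prefactor $2$ coming from the terminal gap.
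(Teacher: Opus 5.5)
Your proposal is correct and follows essentially the same route as the paper. Both cut $\vxi$ at the indices with $\xi_t>h$, iterate the factorization from \cref{lem:ExactCostRecursion}, and bound each gap by $2(1+n_i)^{\alpha_\star}$ via \cref{eq:uniform_worst_case_cost_bound}. Both then apply AM--GM under $\sum_i(1+n_i)=m+1$ and use $\gV_2(\vxi)\ge M/2\ge h/2$ to get the exponent $k+1$ on both sides before taking roots.
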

\begin{proof}[Proof of \cref{lem:OrderSensitiveCount}]
    Fix $h\in(0,M]$.
    Write $J_h:=\{t\in [m]:\xi_t>h\}=\{t_1<\cdots<t_k\}$, $t_0=0$, and $t_{k+1}=m+1$.
    
    Suppose $k\ge 1$.
    The selected indices in $J_h$ partition the remaining entries of $\vxi$ into $k+1$ contiguous blocks $\vxi_{1:t_1-1},\dotsc,\vxi_{t_k+1:m}$, of lengths $m_0,\dotsc,m_k$, respectively.
    Note that $\sum_{j=0}^{k}(m_j+1)=m+1$ because exactly $k$ indices are removed.
    Iterating the factorization in \cref{eq:cost_factorization} at indices $t_1, \dotsc, t_k$ and minimizing each cost separately,
    \begin{align*}
        \gV_2(\vxi)
        \le
        \left(\prod_{i=1}^{k}\frac{2}{\xi_{t_i}}\right)\prod_{j=0}^{k}\gV_2(\vxi_{t_j+1:t_{j+1}-1})
        <
        2^{k}h^{-k} \prod_{j=0}^{k}\gV_2(\vxi_{t_j+1:t_{j+1}-1}),
    \end{align*}    
    where the second inequality uses $\xi_{t_i}>h$.
    Since \cref{lem:WorstCaseCostBound} at $\lambda=2$ implies that
    \begin{align*}
        \gV_2(\vxi_{t_j+1:t_{j+1}-1}) \le 2(m_j+1)^{\alpha_\star}, \qquad (j=0, \dotsc, k)
    \end{align*}
    we have
    \begin{align*}
        \gV_2(\vxi)
        \le
        2^{2k+1}h^{-k}\prod_{j=0}^{k}(m_j+1)^{\alpha_\star}.
    \end{align*}
    Applying the AM-GM inequality to the positive numbers $m_0+1, \dotsc, m_k+1$, we have
    \begin{align*}
        \gV_2(\vxi)
        &\le
        2^{2k+1}h^{-k}\left(\frac{m+1}{k+1}\right)^{\alpha_\star(k+1)}.
    \end{align*}
    Note that this is also true when $k=0$, as \cref{lem:WorstCaseCostBound} at $\lambda=2$ directly implies $\gV_2(\vxi) \le 2(m+1)^{\alpha_\star}$.
    Hence, let $k\ge0$ from now on.
    Apply the assumption that $\gV_2(\vxi)\ge \frac{M}{2}\ge \frac{h}{2}$,
    take the $(k+1)$-st root, and raise to the power $\nu_\star=1/\alpha_\star$. Then, we have
    \begin{align*}
        k+1\le4^{\nu_\star}(m+1)h^{-\nu_\star}.
    \end{align*}
    This proves the desired inequality.
\end{proof}

\begin{restatable}[Sum-maximum bound]{lemma}
{lemOrderSensitiveSumMaximum}
\label{lem:OrderSensitiveSumMaximum}
    Under the same assumption as \cref{lem:OrderSensitiveCount},
    \begin{align*}
        \sum_{i=1}^{m}\xi_i + M \le C\,(m+1)\,M^{1-\nu_\star}, \qquad \text{where } C:=\frac{4^{\nu_\star}}{1-\nu_\star}.
    \end{align*}
\end{restatable}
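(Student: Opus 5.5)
We prove the sum-maximum bound with a layer-cake argument driven by \cref{lem:OrderSensitiveCount}. Since every $\xi_i\le M$, we can write each step as an integral of its indicator, $\xi_i=\int_0^{M}\1\{\xi_i>h\}\,dh$, and sum over $i$:
\begin{align*}
    \sum_{i=1}^m\xi_i=\int_0^M \gN_\vxi(h)\,dh .
\end{align*}
Adding $M=\int_0^M 1\,dh$ to both sides turns the left-hand side of the claim into $\int_0^M\bigl(\gN_\vxi(h)+1\bigr)\,dh$. This places the integrand in exactly the form controlled by \cref{lem:OrderSensitiveCount}.

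Because the hypotheses of the present lemma coincide with those of \cref{lem:OrderSensitiveCount}, that lemma applies for every $h\in(0,M]$ and gives $\gN_\vxi(h)+1\le 4^{\nu_\star}(m+1)h^{-\nu_\star}$. Integrating this pointwise bound over $(0,M]$ yields
\begin{align*}
    \sum_{i=1}^m\xi_i+M\le 4^{\nu_\star}(m+1)\int_0^M h^{-\nu_\star}\,dh=\frac{4^{\nu_\star}}{1-\nu_\star}(m+1)M^{1-\nu_\star}.
\end{align*}
The integral converges because $\nu_\star\approx0.69<1$, and the resulting constant is exactly $C$.

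The only points needing care are the measure-theoretic ones. First, $\gN_\vxi$ is a nonincreasing step function, so it is integrable and the layer-cake identity holds by Tonelli. Second, the pointwise bound from \cref{lem:OrderSensitiveCount} is needed only on the half-open interval $(0,M]$; this matters near $h=0$, where the bound blows up, but the singularity $h^{-\nu_\star}$ is integrable since $\nu_\star<1$. I do not expect a genuine obstacle: all the difficulty sits inside \cref{lem:OrderSensitiveCount}, and this lemma is essentially its integrated form.
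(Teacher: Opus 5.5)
Your proposal is correct and follows essentially the same route as the paper. The paper also writes $\sum_{i}\xi_i=\int_0^M \gN_\vxi(h)\,dh$, applies \cref{lem:OrderSensitiveCount} pointwise on $(0,M]$, and integrates $4^{\nu_\star}(m+1)h^{-\nu_\star}-1$ to obtain the constant $C$; your folding of $M=\int_0^M 1\,dh$ into the left-hand side is the same computation rearranged.
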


\begin{proof}[Proof of \cref{lem:OrderSensitiveSumMaximum}]
    Since $0<\xi_t\le M$ for every $t\in [m]$, we can write $\xi_t = \int_0^M \1_{\{h<\xi_t\}} \mathrm{d}h$.
    Summing these integrals for all $t\in [m]$, we have
    \begin{align*}
        \sum_{i=1}^m \xi_i =
        \int_0^{M}\gN_\xi(h)\,dh.
    \end{align*}
    Applying \cref{lem:OrderSensitiveCount} and using $\nu_\star<1$,
    \begin{align*}
        \sum_{i=1}^m \xi_i\le \int_0^{M} 4^{\nu_\star}(m+1) h^{-\nu_\star} -1 \,dh=
        \frac{4^{\nu_\star}}{1-\nu_\star}(m+1)M^{1-\nu_\star}-M.
    \end{align*}
    This proves the lemma.
\end{proof}

\lemImprovedRecordSumBound*
\begin{proof}[Proof of \cref{lem:ImprovedRecordSumBound}]
    Since $n$ is a record time of $\veta$,
    every step of $\veta_{1:n}$ is at most $M_n=\eta_n$.
    Fix any checkpoints $T=\{t_1<\dotsb<t_k\}\subseteq[n-1]$ and take $T_+ := T\cup\{n\}$.
    Observe that 
    \begin{align*}
        s_{k+1}(T_+;\veta_{1:n})=s_{k+1}(T;\veta_{1:n-1}) \quad \text{and} \quad s_{k+2}(T_+;\veta_{1:n})=0.
    \end{align*}
    Hence, combining \cref{lem:GeneralLowerBound,eq:all_selection_product,eq:cost_and_minimum_cost}, we have
    \begin{align*}
        r_n
        &\ge
        \frac14\closed{\gP(T;\veta_{1:n-1})\,\frac{M_n}{2\open{2+s_{k+1}(T;\veta_{1:n-1})}}}^2
        =
        \frac{M_n^2}{16\,\Psi_2(T;\veta_{1:n-1})^2}.
    \end{align*}
    Choosing $T$ attaining $\gV_2(\veta_{1:n-1})=\min_{T\subseteq[n-1]}\Psi_2(T;\veta_{1:n-1})$ maximizes the right-hand side and yields
    \begin{align*}
        r_n\ge\frac{M_n^2}{16\,\gV_2(\veta_{1:n-1})^2},
        \qquad\implies\qquad
        \gV_2(\veta_{1:n-1})\ge\frac{M_n}{4\sqrt{r_n}}.
    \end{align*}
    Since $r_n \le 1/4$ gives $4\sqrt{r_n}\le 2$, we get $\gV_2(\vxi)\ge M_n/2$, so the hypothesis of \cref{lem:OrderSensitiveSumMaximum} holds with $M=M_n$ and length $m=n-1$. It follows that
    \begin{align*}
        S_n = S_{n-1}+M_n \le C\,\open{(n-1)+1}M_n^{1-\nu_\star}=C\,nM_n^{1-\nu_\star}.
    \end{align*}
    This completes the proof.
\end{proof}

%% file: sec/905TopK.tex
\section{Lower Bounds via Top-k Checkpoint Selection}

Some readers might wonder how we should select the checkpoint steps.
We do not have a definitive answer.
However, this section may provide some guidance.

Before obtaining our main results, we first tried to restrict the strategy for selecting the checkpoint steps.
This is because, previously, we did not know how to handle the combinatorial optimization of the general lower bound (i.e., the right-hand side of the inequality in \cref{lem:GeneralLowerBound}) over all $T\subseteq[N]$.
In particular, we have considered sorting the step-sizes in descending order and selecting the top-$k$ largest steps as checkpoints.
As a result, we obtained slightly worse $\Omega(N^{-1.463})$ non-anytime rate bound (\cref{subsec:topk_nonanytime}) and $\Omega(n^{-1.188})$ anytime barrier (\cref{subsec:topk_anytime}).
We suspect that simple top-$k$ selection loses step-order information, which is important in modern step-size schedules for accelerating GD (e.g., the silver step-size schedule).

\subsection{Non-Anytime Lower Bound via Top-k Checkpoint Selection}\label{subsec:topk_nonanytime}

Let
\[
    \zeta_1 \ge \zeta_2 \ge \cdots \ge \zeta_N > 0
\]
be the decreasing rearrangement of $\eta_1,\ldots,\eta_N$. For any positive sequence $\xi=(\xi_1,\ldots,\xi_N)$ and $k=0,\ldots,N$, define the remaining sum
\begin{align}
    \Sigma_k(\xi) := \sum_{j=k+1}^{N} \xi_j,
    \label{eq:topk_remaining_sum}
\end{align}
and
\begin{align}
    \Phi_k(\xi):=\frac{\left(\prod_{j=1}^{k}\xi_j\right)^2}{4\cdot 16^k\left(1+\dfrac{\Sigma_k(\xi)}{2k+1}\right)^{2k+1}}.
    \label{eq:topk_phi}
\end{align}
\begin{restatable}[Top-$k$ reduction]{lemma}{lemTopKReduction}
\label{lem:TopKReduction}
    For every $k\in\{0,\ldots,N\}$,
    \begin{align}
        \mathcal R_N(\veta) \ge \Phi_k(\zeta).
        \label{eq:topk_reduction}
    \end{align}
    Consequently,
    \begin{align}
        \mathcal R_N(\veta)\ge\max_{0\le k\le N}\Phi_k(\zeta).
        \label{eq:topk_max_phi}
    \end{align}
\end{restatable}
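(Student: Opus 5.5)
We apply \cref{lem:GeneralLowerBound} with the checkpoint set $T$ consisting of the indices (in $\veta$) of the top-$k$ steps $\zeta_1,\dots,\zeta_k$; ties are broken arbitrarily, and the case $k=0$ corresponds to $T=\emptyset$. Listed in time order, the checkpoint steps $b_1,\dots,b_k$ form a permutation of $\zeta_1,\dots,\zeta_k$, so $\prod_{i=1}^k b_i=\prod_{j=1}^k\zeta_j$. Every non-checkpoint step lies in exactly one gap, hence
\[
\sum_{i=1}^{k+1}s_i=\Sigma_k(\zeta).
\]
\cref{lem:GeneralLowerBound} then gives
\begin{align*}
\gR_N(\veta)\ge \frac{\bigl(\prod_{j=1}^k\zeta_j\bigr)^2}{4\cdot 16^k}\cdot\frac{1}{(1+s_{k+1})\prod_{i=1}^k\bigl(1+\tfrac{s_i}{2}\bigr)^2},
\end{align*}
where we used $2(2+s_i)=4(1+s_i/2)$, so that $\bigl(b_i/(2(2+s_i))\bigr)^2=b_i^2/\bigl(16(1+s_i/2)^2\bigr)$.

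It remains to bound the denominator uniformly over the unknown split of $\Sigma_k$ into gaps. Write $D:=(1+s_{k+1})\prod_{i=1}^k(1+s_i/2)^2$. This is a product of $2k+1$ factors: $k$ pairs $(1+s_i/2)$, $(1+s_i/2)$ and one factor $(1+s_{k+1})$. Each factor has the form $1+u$, and the $u$'s sum to
\[
\sum_{i=1}^k s_i+s_{k+1}=\Sigma_k(\zeta).
\]
By AM--GM,
\[
D\le\Bigl(1+\frac{\Sigma_k(\zeta)}{2k+1}\Bigr)^{2k+1},
\]
which yields exactly $\gR_N(\veta)\ge\Phi_k(\zeta)$. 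This is \cref{eq:topk_reduction}, and maximizing over $k$ gives \cref{eq:topk_max_phi}.

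The only point needing care is the bookkeeping in the AM--GM step. Splitting the square into two equal factors $(1+s_i/2)$ is what makes the total of the increments exactly $\Sigma_k$ rather than something larger; this is the one step I would check. The remaining details are trivial: the edge cases $k=0$, where $\Phi_0=1/(4(1+\Sigma_0))$ matches the empty checkpoint bound, and $k=N$, where $\Sigma_N=0$.
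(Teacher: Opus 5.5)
Your proposal is correct and follows the paper's proof: both take the indices of the $k$ largest steps as checkpoints in \cref{lem:GeneralLowerBound}, use $\prod_i b_i=\prod_{j\le k}\zeta_j$ and $\sum_i s_i=\Sigma_k(\zeta)$, and apply AM--GM to the $2k+1$ factors $1+s_i/2$ (each counted twice) and $1+s_{k+1}$. The bookkeeping you flagged is right: the increments sum to exactly $\Sigma_k(\zeta)$, which gives $\Phi_k(\zeta)$.
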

\begin{proof}[Proof of \cref{lem:TopKReduction}]
    The case $k=0$ follows directly from \cref{lem:GeneralLowerBound} by selecting no big step. Hence, fix $k\ge1$ and choose any $k$ indices whose step sizes are $\zeta_1,\ldots,\zeta_k$, ordered according to their original positions in the schedule. For the corresponding block decomposition, the selected big steps $b_1,\ldots,b_k$ and the unselected block sums $s_1,\ldots,s_{k+1}$ satisfy
    \begin{align}
        \prod_{i=1}^{k} b_i=\prod_{j=1}^{k}\zeta_j,\qquad\sum_{i=1}^{k+1}s_i=\Sigma_k(\zeta).
        \label{eq:topk_block_relations}
    \end{align}
    We apply AM--GM to the following $2k+1$ nonnegative numbers:
    \[
        1+\frac{s_1}{2},\ 1+\frac{s_1}{2},\ \ldots,\ 1+\frac{s_k}{2},\ 1+\frac{s_k}{2},\ 1+s_{k+1}.
    \]
    Their average is $1+\Sigma_k(\zeta)/(2k+1)$, and therefore
    \begin{align}
        (1+s_{k+1}) \prod_{i=1}^{k}(s_i+2)^2 &=4^k(1+s_{k+1})\prod_{i=1}^{k}\left(1+\frac{s_i}{2}\right)^2 \nonumber\\
        &\le 4^k\left(1+\frac{\Sigma_k(\zeta)}{2k+1}\right)^{2k+1}.
        \label{eq:topk_amgm}
    \end{align}
    Substituting \cref{eq:topk_block_relations,eq:topk_amgm} into \cref{eq:general_normalized_lower_bound} gives \cref{eq:topk_reduction}. Maximizing over $k$ proves \cref{eq:topk_max_phi}.
\end{proof}
It remains to lower-bound the maximum in \cref{eq:topk_max_phi} uniformly over all decreasing sequences $\zeta$; the following comparison lemma allows us to replace $\zeta$ by a suitably chosen reference sequence.
\begin{restatable}[Product-sum crossing lemma]{lemma}{lemProductSumCrossing}
\label{lem:ProductSumCrossing}
    Let $\zeta_1\ge\cdots\ge\zeta_N>0$ and let $\gamma_1,\ldots,\gamma_N>0$ be arbitrary. Then there exists $k\in\{0,\ldots,N\}$ such that
    \begin{align}
        \prod_{j=1}^{k}\zeta_j\ge\prod_{j=1}^{k}\gamma_j,\qquad\sum_{j=k+1}^{N}\zeta_j\le\sum_{j=k+1}^{N}\gamma_j.
        \label{eq:product_sum_crossing_two_sided}
    \end{align}
    If $\sum_{j=1}^{N}\zeta_j>\sum_{j=1}^{N}\gamma_j$, then $k$ can moreover be chosen so that $k\ge1$ and the prefix-product inequality is strict. Consequently,
    \begin{align}
        \max_{0\le k\le N}\Phi_k(\zeta)
        \ge
        \min_{0\le k\le N}\Phi_k(\gamma).
        \label{eq:product_sum_crossing_phi_comparison}
    \end{align}
\end{restatable}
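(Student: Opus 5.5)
We will prove the existence of a crossing index by a discrete intermediate-value argument on the function
\[
D(k):=\log\prod_{j=1}^{k}\zeta_j-\log\prod_{j=1}^{k}\gamma_j=\sum_{j=1}^{k}\log\frac{\zeta_j}{\gamma_j},\qquad E(k):=\sum_{j=k+1}^{N}(\gamma_j-\zeta_j),
\]
so that we need some $k$ with $D(k)\ge0$ and $E(k)\ge0$. Note $D(0)=0$ and $E(N)=0$.

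First, if $E(0)\ge0$, then $k=0$ works.

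Otherwise $\sum_j\zeta_j>\sum_j\gamma_j$, and we must find $k\ge1$ with $D(k)>0$ and $E(k)\ge0$. Let $k^\ast$ be the smallest index with $E(k^\ast)\ge0$. It exists since $E(N)=0$, and $k^\ast\ge1$. By minimality, $E(k^\ast-1)<0\le E(k^\ast)$, so $E(k^\ast-1)-E(k^\ast)=\gamma_{k^\ast}-\zeta_{k^\ast}<0$, that is, $\zeta_{k^\ast}>\gamma_{k^\ast}$.

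We then show $D(k^\ast)>0$. This is the main obstacle, since the last step only gives one favourable ratio. The plan is to use monotonicity of $\zeta$. For each $j\le k^\ast-1$ we have $E(j)<0$, i.e.\ $\sum_{i>j}\zeta_i>\sum_{i>j}\gamma_i$. Rather than control $D$ termwise, we argue by contradiction.

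Suppose $D(k^\ast)\le0$. Let $g:=\zeta_{k^\ast}$, so $\zeta_j\ge g$ for $j\le k^\ast$ because $\zeta$ is decreasing. We are given only the two facts $E(0)<0$ and $E(k^\ast)\ge0$, which yield $\sum_{j\le k^\ast}\zeta_j>\sum_{j\le k^\ast}\gamma_j$. This does not directly contradict $\prod\zeta\le\prod\gamma$, so we will instead choose $k$ differently.

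Take $k$ maximizing $D(k)+\log$-type potential? A cleaner route is to choose $k$ as the largest index with $D(k)>0$ among $k\ge1$, and check $E(k)\ge0$. For $j>k$ with $D(j)\le0$, repeatedly applying $\log x\le x-1$ will not suffice. We therefore use the decreasing structure: if $D(j)\le D(k)$ for all $j>k$, then the tail geometric means satisfy $\prod_{k<i\le j}\zeta_i\le\prod_{k<i\le j}\gamma_i$ for all $j$. Since $\zeta$ is decreasing on the tail, the sequence $(\zeta_i)_{i>k}$ is majorized-below by $\gamma$ in the log sense. By the Karamata/Tomić–Weyl inequality for the concave-in-log map $e^x$, applied with decreasing $\log\zeta$ and weak log-submajorization, we get $\sum_{i>k}\zeta_i\le\sum_{i>k}\gamma_i$, i.e.\ $E(k)\ge0$.

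So the argument is as follows. Let $k$ be an argmax of $D$ over $\{0,\dots,N\}$, choosing the largest such index. Then $D(k)\ge0$, all tail partial products of $\zeta/\gamma$ are $\le1$, and Tomić–Weyl (which needs $\zeta$ decreasing, but not $\gamma$; we will verify that only the decreasing order of the dominated sequence is needed, which is the delicate point) gives $E(k)\ge0$.

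For strictness when $\sum\zeta>\sum\gamma$: then $k=0$ would give $E(0)<0$, contradicting the above. Hence $k\ge1$ and $D(k)\ge D(0)=0$. We need strict inequality, so we pick the smallest maximizer instead, for which $D(k)>D(j)$ for $j<k$, in particular $D(k)>0$. The tail condition still holds for any maximizer.

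Finally, \eqref{eq:product_sum_crossing_phi_comparison} is immediate. $\Phi_k$ is increasing in the prefix product and decreasing in the remaining sum, so at the crossing $k$ we have $\Phi_k(\zeta)\ge\Phi_k(\gamma)\ge\min_{k'}\Phi_{k'}(\gamma)$.
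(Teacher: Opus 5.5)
Your final argument is correct, and it runs the paper's proof in the dual direction. The paper uses exactly the index you first tried and then abandoned: $k$ is the smallest index with $\sum_{j>k}\zeta_j\le\sum_{j>k}\gamma_j$. By minimality, $\Delta_i:=\sum_{j=i}^{k}(\zeta_j-\gamma_j)=E(k)-E(i-1)>0$ for all $i\le k$, which is just what you observed. Then $\log(\gamma_j/\zeta_j)\le\gamma_j/\zeta_j-1$, together with summation by parts against the nondecreasing weights $1/\zeta_j$, gives $\sum_{j\le k}\log(\gamma_j/\zeta_j)\le-\Delta_1/\zeta_1<0$. So your remark that $\log x\le x-1$ ``will not suffice'' was premature: combined with Abel summation it closes your first route in two lines, and strictness comes for free.

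Your replacement instead takes $k$ to be a maximizer of $D$. This makes the product inequality automatic and the tail partial sums of $\log(\gamma_i/\zeta_i)$ nonnegative, and you recover the sum inequality from Tomi\'c--Weyl for the \emph{convex} increasing map $\exp$ (not ``concave'', as you wrote). The one-sided version you flag as delicate is valid. One way to see it: sorting the tail of $\log\gamma$ decreasingly only increases its prefix sums. Directly: $\gamma_i-\zeta_i\ge\zeta_i\log(\gamma_i/\zeta_i)$, followed by Abel summation against the nonincreasing positive weights $\zeta_i$. This is the same tangent-line-plus-summation-by-parts mechanism as the paper's, with sums and products swapped.

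For strictness, state it cleanly. Every maximizer has $E\ge 0$, so if $E(0)<0$ then $0$ is not a maximizer. Hence $\max D>D(0)=0$, and any maximizer has $k\ge1$ and a strict prefix-product inequality.

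The paper's route is fully elementary and gets strictness directly from $\Delta_1>0$. Yours gives a more canonical choice of $k$, at the cost of invoking (and slightly extending) a majorization inequality. The abandoned false starts should be cut from your write-up.
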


\begin{proof}[Proof of \cref{lem:ProductSumCrossing}]
    For $k=0,\ldots,N$, use the remaining-sum notation $\Sigma_k(\zeta)$ and $\Sigma_k(\gamma)$ from \cref{eq:topk_remaining_sum}.
    If $\Sigma_0(\zeta)\le\Sigma_0(\gamma)$, then $k=0$ already satisfies \cref{eq:product_sum_crossing_two_sided}. Otherwise, let $k\ge1$ be the smallest index for which $\Sigma_k(\zeta)\le\Sigma_k(\gamma)$. Such an index exists because $\Sigma_N(\zeta)=\Sigma_N(\gamma)=0$.

    For each $i=1,\ldots,k$, define
    \begin{align*}
        \Delta_i:=\sum_{j=i}^{k}(\zeta_j-\gamma_j)=\left(\Sigma_{i-1}(\zeta)-\Sigma_{i-1}(\gamma)\right)-\left(\Sigma_k(\zeta)-\Sigma_k(\gamma)\right).
    \end{align*}
    By the minimality of $k$, all $\Delta_i$ are nonnegative, and they are all strictly positive when $\Sigma_0(\zeta)>\Sigma_0(\gamma)$. Also, $\psi_j:=1/\zeta_j$ is nondecreasing because $\zeta_j$ is nonincreasing. Using the concavity of $\log$ and summation by parts, we obtain
    \begin{align*}
        \sum_{j=1}^{k}\left(\log\gamma_j-\log\zeta_j\right)&\le-\sum_{j=1}^{k}\psi_j(\zeta_j-\gamma_j) \nonumber\\ &=-\psi_1\Delta_1-\sum_{i=2}^{k}(\psi_i-\psi_{i-1})\Delta_i\le 0.
    \end{align*}
    Hence the prefix product inequality in \cref{eq:product_sum_crossing_two_sided} holds, and it is strict when $\Sigma_0(\zeta)>\Sigma_0(\gamma)$; the remaining-sum inequality holds by the choice of $k$.

    Since $\Phi_k$ is increasing in the prefix product and decreasing in the remaining sum, $\Phi_k(\zeta)\ge\Phi_k(\gamma)$, which immediately implies \cref{eq:product_sum_crossing_phi_comparison}.
\end{proof}
We now choose the reference sequence $\gamma$ so that $\Phi_k(\gamma)$ can be bounded uniformly over all $k$.
\begin{restatable}[Polynomially decaying reference sequence]{lemma}{lemPolynomiallyDecayingReferenceSequence}
\label{lem:PolynomiallyDecayingReferenceSequence}
    Let $p>1$ and $\bar\gamma>0$ satisfy
    \begin{align}
        4e^{-p}\le \bar\gamma\le 2(p-1).
        \label{eq:power_law_feasible_interval}
    \end{align}
    For a fixed horizon $N$, define
    \begin{align*}
        \gamma_j:=\bar\gamma\left(\frac{N}{j}\right)^p, \qquad j=1,\ldots,N.
    \end{align*}
    Then
    \begin{align}
        \min_{0\le k\le N}\Phi_k(\gamma)\ge\frac{e^{-2p}}{4}N^{-p}.
        \label{eq:power_law_uniform_phi_bound}
    \end{align}
\end{restatable}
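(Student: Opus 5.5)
The plan is to treat $k=0$ and $k\ge1$ separately. In the case $k\ge1$, the key point is to bound the denominator factor $\bigl(1+\Sigma_k(\gamma)/(2k+1)\bigr)^{2k+1}$ by $(N/k)^{p(2k+1)}$ with no extra constant, so that the lower end of \cref{eq:power_law_feasible_interval} exactly cancels the $16^k$.

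\textbf{Case $k=0$.} Here $\Phi_0(\gamma)=\frac{1}{4(1+\Sigma_0(\gamma))}$. Comparing the sum with an integral gives
\[
\Sigma_0(\gamma)=\bar\gamma N^p\sum_{j\ge1}j^{-p}\le \bar\gamma N^p\Bigl(1+\tfrac{1}{p-1}\Bigr)=\bar\gamma N^p\tfrac{p}{p-1}\le 2pN^p,
\]
where the last step uses the upper end $\bar\gamma\le 2(p-1)$. Then
\[
1+\Sigma_0(\gamma)\le (1+2p)N^p\le e^{2p}N^p,
\]
which is exactly \cref{eq:power_law_uniform_phi_bound} at $k=0$.

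\textbf{Case $1\le k\le N$, numerator.} The prefix product is explicit:
\[
\prod_{j\le k}\gamma_j=\bar\gamma^k N^{pk}/(k!)^p .
\]

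\textbf{Case $1\le k\le N$, remaining sum.} Since $j^{-p}\le\int_{j-1}^{j}x^{-p}\,dx$, we get
\[
\Sigma_k(\gamma)\le \bar\gamma N^p\int_k^N x^{-p}\,dx=\frac{\bar\gamma}{p-1}\bigl(N^pk^{1-p}-N\bigr)\le 2\bigl(N^pk^{1-p}-N\bigr).
\]
Setting $a:=\Sigma_k(\gamma)/(2k+1)$, this gives
\[
1+a\le \tfrac{2k}{2k+1}(N/k)^p+\tfrac{2k+1-2N}{2k+1}.
\]
Since $k\le N$, the last fraction is at most $\tfrac{1}{2k+1}\le \tfrac{1}{2k+1}(N/k)^p$. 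Hence
\[
1+a\le (N/k)^p .
\]
I expect this step to be the main obstacle. A cruder bound such as $1+a\le 2(N/k)^p$ loses a factor $4^k$ that cannot be recovered, so keeping the $-N$ term from truncating the integral at $N$ is essential.

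\textbf{Case $1\le k\le N$, assembling.} Combining the two pieces,
\[
\Phi_k(\gamma)\ge \frac{\bar\gamma^{2k}N^{2pk}\,k^{p(2k+1)}}{4\cdot16^k\,(k!)^{2p}\,N^{p(2k+1)}}.
\]
Apply the Stirling-type upper bound $k!\le e\,k^{k+1/2}e^{-k}$ (valid for $k\ge1$). This gives $(k!)^{2p}\le e^{2p}k^{p(2k+1)}e^{-2pk}$, so
\[
\Phi_k(\gamma)\ge \Bigl(\tfrac{\bar\gamma e^{p}}{4}\Bigr)^{2k}\frac{e^{-2p}}{4}N^{-p}.
\]
The lower end $\bar\gamma\ge 4e^{-p}$ of \cref{eq:power_law_feasible_interval} makes the bracket at least $1$. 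This yields \cref{eq:power_law_uniform_phi_bound} for every $k$, and taking the minimum over $k$ finishes the proof.
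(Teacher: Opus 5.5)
Your proof is correct and follows the paper's argument essentially step for step: the same $k=0$ series-versus-integral comparison using $\bar\gamma\le 2(p-1)$, the same truncated-integral bound $\Sigma_k(\gamma)\le 2k(N/k)^p-2N$ (where keeping the $-2N$ term together with $k\le N$ gives $1+\Sigma_k(\gamma)/(2k+1)\le (N/k)^p$), and the same Stirling bound $k!\le e\sqrt{k}\,(k/e)^k$ combined with $\bar\gamma\ge 4e^{-p}$. The only slip is cosmetic: in the $k=0$ case the first relation should be $\le$ rather than $=$, since $\Sigma_0(\gamma)$ sums only up to $j=N$.
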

\begin{proof}[Proof of \cref{lem:PolynomiallyDecayingReferenceSequence}]
    We first consider $k=0$. Since $p>1$,
    \begin{align*}
        \sum_{j=1}^{N}j^{-p} \le 1+\int_{1}^{\infty}x^{-p}\,dx=1+\frac{1}{p-1}.
    \end{align*}
    Using $\bar\gamma\le2(p-1)$, we therefore have
    $\sum_{j=1}^{N}\gamma_j\le2pN^p$. Consequently,
    \begin{align}
        \Phi_0(\gamma)
        &=
        \frac{1}{4\left(1+\sum_{j=1}^{N}\gamma_j\right)}\ge\frac{1}{4(1+2pN^p)} \nonumber\\
        &\ge\frac{1}{4(2p+1)}N^{-p}\ge\frac{e^{-2p}}{4}N^{-p},
        \label{eq:power_law_k_zero}
    \end{align}
    where the last inequality follows from $e^{2p}\ge1+2p$.
    
    Now fix $1\le k\le N$. The sum of the remaining terms of the comparison sequence satisfies
    \begin{align}
        \sum_{j=k+1}^{N}\gamma_j
        &=
        \bar\gamma N^p\sum_{j=k+1}^{N}j^{-p}\le \bar\gamma N^p\int_k^N x^{-p}\,dx \nonumber\\
        &=
        \frac{\bar\gamma N^p}{p-1}\left(k^{1-p}-N^{1-p}\right)\le 2k\left(\frac Nk\right)^p-2N.
        \label{eq:power_law_remaining_sum_bound}
    \end{align}
    If $Y:=(N/k)^p\ge1$, then
    \[
        1+\frac{2kY-2N}{2k+1}\le Y.
    \]
    Combining this observation with \cref{eq:power_law_remaining_sum_bound} gives
    \begin{align}
        1+
        \frac{1}{2k+1}\sum_{j=k+1}^{N}\gamma_j
        \le
        \left(\frac Nk\right)^p.
        \label{eq:power_law_denominator_bound}
    \end{align}
    On the other hand, the prefix product is
    \[
        \prod_{j=1}^{k}\gamma_j
        =
        \frac{(\bar\gamma N^p)^k}{(k!)^p}.
    \]
    Substitution into \cref{eq:topk_phi}, followed by
    \cref{eq:power_law_denominator_bound}, yields
    \begin{align*}
        \Phi_k(\gamma)
        &\ge \frac{(\bar\gamma N^p)^{2k}}{4\cdot 16^k(k!)^{2p}(N/k)^{p(2k+1)}}\nonumber\\
        &\ge
        \frac{N^{-p}}{4}\,
        k^p\left(\frac{\bar\gamma}4\right)^{2k}
        \left(\frac{k^k}{k!}\right)^{2p}.
    \end{align*}
    We use the standard upper Stirling bound
    \[
        k!\le e\sqrt{k}\left(\frac{k}{e}\right)^k,
    \]
    which implies
    \[
        k^p\left(\frac{k^k}{k!}\right)^{2p}
        \ge e^{2p(k-1)}.
    \]
    Thus
    \begin{align*}
        \Phi_k(\gamma)
        &\ge
        \frac{e^{-2p}}{4}N^{-p}
        \left(\frac{\bar\gamma e^p}{4}\right)^{2k}
        \ge
        \frac{e^{-2p}}{4}N^{-p},
    \end{align*}
    where the last inequality uses $\bar\gamma\ge4e^{-p}$. Together with
    \cref{eq:power_law_k_zero}, this proves
    \cref{eq:power_law_uniform_phi_bound}.
\end{proof}
The feasible interval in \cref{eq:power_law_feasible_interval} is nonempty precisely when $4e^{-p}\le2(p-1)$. The smallest exponent allowed by this comparison argument is therefore obtained when the two endpoints coincide.
\begin{restatable}[Finite-horizon lower bound via top-$k$ selection]{theorem}{thmTopKFiniteHorizon}
\label{thm:TopKFiniteHorizon}
    Define
    \begin{align}
        \alpha:=1+W_0\left(\frac{2}{e}\right)=1.463055513\ldots, \qquad
        \nu:=\frac{1}{\alpha}=0.683501064\ldots.
        \label{eq:topk_critical_exponents}
    \end{align}
    where $W_0$ denotes the principal branch of the Lambert $W$ function. Then every positive $N$-step schedule $\veta\in(0,\infty)^N$ satisfies
    \begin{align}
        \mathcal R_N(\veta)
        \ge
        \frac{e^{-2\alpha}}{4}
        N^{-\alpha}.
        \label{eq:topk_finite_horizon_lower_bound}
    \end{align}
    Consequently, no horizon-dependent family of positive step-size schedules can achieve a worst-case rate of $o\open{N^{-\alpha}}$.
\end{restatable}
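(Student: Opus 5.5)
The argument chains three earlier results: \cref{lem:TopKReduction}, \cref{lem:ProductSumCrossing} and \cref{lem:PolynomiallyDecayingReferenceSequence}, with the exponent chosen at the critical point where the feasible interval \cref{eq:power_law_feasible_interval} shrinks to a single point. Fix $N$ and a positive schedule $\veta\in(0,\infty)^N$, and let $\zeta$ be its decreasing rearrangement. \cref{lem:TopKReduction} gives $\gR_N(\veta)\ge\max_{0\le k\le N}\Phi_k(\zeta)$. Applying \cref{lem:ProductSumCrossing} with any positive reference sequence $\gamma$ then gives $\max_k\Phi_k(\zeta)\ge\min_k\Phi_k(\gamma)$. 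So it suffices to pick $\gamma$ of the power-law form $\gamma_j=\bar\gamma(N/j)^p$ and invoke \cref{lem:PolynomiallyDecayingReferenceSequence}.

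The key step is choosing $p=\alpha$ and $\bar\gamma$ so that \cref{eq:power_law_feasible_interval} holds with $p$ as small as possible. The interval is nonempty iff $4e^{-p}\le 2(p-1)$, that is, $2e^{-p}\le p-1$. Substitute $q:=p-1$; the condition becomes $qe^{q}\ge 2/e$. Since $q\mapsto qe^q$ is increasing on $[0,\infty)$, the smallest admissible value is $q=W_0(2/e)$, i.e.\ $p=\alpha=1+W_0(2/e)$. At this value $4e^{-\alpha}=2(\alpha-1)$, so $\bar\gamma:=2(\alpha-1)=4e^{-\alpha}$ is admissible. Also $p>1$, because $W_0(2/e)>0$. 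The numerical values $\alpha\approx1.4631$ and $\nu=1/\alpha$ follow from evaluating $W_0$ and need no proof beyond the identity $W_0(2/e)e^{W_0(2/e)}=2/e$.

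With these choices, \cref{eq:power_law_uniform_phi_bound} gives $\min_k\Phi_k(\gamma)\ge \frac{e^{-2\alpha}}{4}N^{-\alpha}$. Chaining the three inequalities yields \cref{eq:topk_finite_horizon_lower_bound}. For the consequence, suppose a family $\veta^{(N)}$ achieved $\gR_N(\veta^{(N)})=o(N^{-\alpha})$. Then $N^{\alpha}\gR_N(\veta^{(N)})\to0$, which contradicts the uniform bound $N^\alpha\gR_N(\veta^{(N)})\ge e^{-2\alpha}/4>0$.

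I expect no serious obstacle, since the heavy lifting is in the preceding lemmas. The only point needing care is the Lambert-$W$ step: showing that the equality case of the feasibility condition is exactly $p=1+W_0(2/e)$, and that this $p$ exceeds $1$ so the reference-sequence lemma applies.
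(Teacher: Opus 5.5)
Your proposal is correct and follows essentially the same route as the paper's proof. You chain \cref{lem:TopKReduction}, \cref{lem:ProductSumCrossing} and \cref{lem:PolynomiallyDecayingReferenceSequence} with $p=\alpha$ and $\bar\gamma=2(\alpha-1)=4e^{-\alpha}$, which is justified by the identity $(\alpha-1)e^{\alpha}=2$ coming from $W_0(2/e)e^{W_0(2/e)}=2/e$. Your extra remark that this $\alpha$ is the smallest exponent making the interval in \cref{eq:power_law_feasible_interval} nonempty is a harmless addition that the theorem itself does not require.
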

\begin{proof}[Proof of \cref{thm:TopKFiniteHorizon}]
    The definition of $\alpha$ is equivalent to
    \begin{align}
        (\alpha-1)e^{\alpha}=2.
        \label{eq:topk_critical_balance}
    \end{align}
    Hence, if
    \[
        \bar\gamma:=2(\alpha-1)=4e^{-\alpha},
    \]
    then $\bar\gamma$ satisfies \cref{eq:power_law_feasible_interval} with equality at both endpoints. We may therefore apply \cref{lem:ProductSumCrossing} to the decreasing rearrangement $\zeta$ of the given schedule and the reference sequence
    \[
        \gamma_j=\bar\gamma
        \left(\frac Nj\right)^{\alpha}.
    \]
    Combining \cref{lem:TopKReduction,lem:ProductSumCrossing,lem:PolynomiallyDecayingReferenceSequence} gives
    \begin{align*}
        \mathcal R_N(\veta)
        \ge
        \max_{0\le k\le N}\Phi_k(\zeta)
        \ge
        \min_{0\le k\le N}\Phi_k(\gamma)
        \ge
        \frac{e^{-2\alpha}}{4}
        N^{-\alpha},
    \end{align*}
    proving \cref{eq:topk_finite_horizon_lower_bound}. Since the bound is uniform over every positive schedule of length $N$, the final statement follows immediately.
\end{proof}

\subsection{Anytime Lower Bound via Top-k Checkpoint Selection}
\label{subsec:topk_anytime}

For an infinite step-size schedule, we retain the notation $r_n$, $S_n$, $M_n$, and the notion of a record time introduced in \cref{sec:anytime}.

\begin{restatable}[Top-$k$ constraints at record times]{lemma}{lemTopKRecordConstraints}
\label{lem:TopKRecordConstraints}
    Fix a record time $n$, and let
    \[
    \zeta_{n,1}\ge\zeta_{n,2}\ge \cdots \ge \zeta_{n,n}>0
    \]
    be the decreasing rearrangement of $\eta_1, \ldots, \eta_n$. For $k=1,\ldots, n$, define
    \begin{align*}
        \Sigma_{n,k}:=\sum_{j=k+1}^n \zeta_{n,j},\qquad G_{n,k}:=\left(\prod_{j=1}^k \zeta_{n,j}\right)^{1/k}.
    \end{align*}
    Then, for every $k=1,\ldots, n$,
    \begin{align}
        r_n\ge \frac 14 \left(\frac{G_{n,k}}{4+\dfrac{2\Sigma_{n,k}}{k}}\right)^{2k}
        \label{eq:anytime_topk_record_lower_bound}
    \end{align}
    Hence, if $r_n<1/4$, then
    \begin{align}
        G_{n,k}\le 4+\frac{2\Sigma_{n,k}}{k},\qquad k=1,\ldots,n.
        \label{eq:anytime_topk_record_constraint}
    \end{align}
    Moreover, every record time satisfies
    \begin{align}
        r_n\ge\frac{M_n^2}{16(2+S_{n-1})^2}.
        \label{eq:anytime_single_record_step}
    \end{align}
\end{restatable}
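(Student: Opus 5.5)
Apply \cref{lem:GeneralLowerBound} to the prefix $\veta_{1:n}$, choosing as checkpoints the positions of the $k$ largest steps $\zeta_{n,1},\dots,\zeta_{n,k}$, ordered by their original positions. Then $\prod_{i=1}^k b_i=G_{n,k}^k$ and $\sum_{i=1}^{k+1}s_i=\Sigma_{n,k}$.

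The point of using a record time is that the last step $\eta_n=M_n$ is itself among the top $k$ (if several steps tie for the maximum, pick index $n$ among them). Hence $t_k=n$ and the terminal gap is empty, $s_{k+1}=0$. The bound becomes
\begin{align*}
    r_n\ge \frac14\prod_{i=1}^k\left(\frac{b_i}{2(2+s_i)}\right)^2=\frac14\cdot\frac{G_{n,k}^{2k}}{\prod_{i=1}^k(4+2s_i)^2}.
\end{align*}
Apply AM--GM to the $k$ numbers $4+2s_i$, whose sum is $4k+2\Sigma_{n,k}$, since $s_{k+1}=0$ places all of $\Sigma_{n,k}$ in $s_1,\dots,s_k$. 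This gives $\prod_{i=1}^k(4+2s_i)\le (4+2\Sigma_{n,k}/k)^k$, and hence \cref{eq:anytime_topk_record_lower_bound}. Unlike \cref{lem:TopKReduction}, no $(1+s_{k+1})$ factor enters, which is why the exponent is $2k$ rather than $2k+1$.

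For \cref{eq:anytime_topk_record_constraint}, suppose $G_{n,k}>4+2\Sigma_{n,k}/k$. Then the ratio in \cref{eq:anytime_topk_record_lower_bound} exceeds $1$, so its $2k$-th power exceeds $1$, and therefore $r_n>1/4$. This contradicts the assumption $r_n<1/4$.

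Finally, \cref{eq:anytime_single_record_step} is exactly \cref{eq:naive_bound_last_checkpoint_record_time}: take $T=\{n\}$ in \cref{lem:GeneralLowerBound}, which gives $s_1=S_{n-1}$, $b_1=M_n$ and $s_2=0$.

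The only subtle step is making sure index $n$ is selected when there are ties at the top. This is handled by the tie-breaking choice, which does not change the multiset of selected values. Everything else is direct substitution.
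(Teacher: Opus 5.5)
Your proposal is correct and follows the paper's proof: you select the $k$ largest steps with index $n$ included (possible since $n$ is a record time), so that $s_{k+1}=0$, apply AM--GM to the $k$ gap factors, deduce the constraint from $r_n<1/4$, and take $T=\{n\}$ for the last bound. Applying AM--GM to $4+2s_i$ rather than the paper's $2+s_i$ changes nothing, and your explicit tie-breaking remark is a welcome clarification of the paper's phrase ``in such a way that the last step is selected.''
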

\begin{proof}[Proof of \cref{lem:TopKRecordConstraints}]
    Fix $k\in[n]$. Since $n$ is a record time, we may choose the $k$ largest step-sizes among the first $n$ steps as the selected big steps in such a way that the last step $\eta_n=M_n$ is selected. Thus the terminal small-step block is empty, while the total sum of the unselected steps is $\Sigma_{n,k}$. If $s_1,\ldots,s_k$ are the small-step block sums preceding the selected steps, then
    \begin{align}
        \prod_{i=1}^{k}b_i=\prod_{j=1}^{k}\zeta_{n,j}=G_{n,k}^{\,k},\qquad \sum_{i=1}^{k}s_i=\Sigma_{n,k},\qquad s_{k+1}=0.
        \label{eq:anytime_record_block_relations}
    \end{align}
    Applying AM--GM to $2+s_1,\ldots,2+s_k$ gives
    \begin{align}
        \prod_{i=1}^{k}(2+s_i)\le\left(2+\frac{\Sigma_{n,k}}{k}\right)^k.
        \label{eq:anytime_record_amgm}
    \end{align}
    Substituting \cref{eq:anytime_record_block_relations,eq:anytime_record_amgm} into \cref{eq:general_normalized_lower_bound} yields \cref{eq:anytime_topk_record_lower_bound}. If $r_n<1/4$, then $(4r_n)^{1/(2k)}<1$, so \cref{eq:anytime_topk_record_lower_bound} immediately gives \cref{eq:anytime_topk_record_constraint}.

    Finally, selecting only the last step $\eta_n=M_n$ gives a single big step with preceding small-step sum $S_{n-1}$ and an empty terminal block. Applying \cref{lem:GeneralLowerBound} once more gives \cref{eq:anytime_single_record_step}.
\end{proof}
The constraints in \cref{eq:anytime_topk_record_constraint} couple the prefix geometric means with the corresponding remaining sums. To analyze them simultaneously, we construct a comparison sequence that attains equality in these constraints at every index. This sequence will serve as an extremal reference in the sum--maximum argument below, and its growth is controlled by the same exponent $\alpha$ introduced in \cref{eq:topk_critical_exponents}.

\begin{restatable}[Critical comparison sequence]{lemma}{lemCriticalComparisonSequence}
\label{lem:CriticalComparisonSequence}
    For every $n\ge1$, there exist positive numbers $\kappa_1,\ldots,\kappa_n$ and numbers $P_1>P_2>\cdots>P_n=4$ such that, for every $k=1,\ldots,n$,
    \begin{align}
        \left(\prod_{j=1}^{k}\kappa_j\right)^{1/k}=P_k=4+\frac{2}{k}\sum_{j=k+1}^{n}\kappa_j.
        \label{eq:critical_sequence_equalities}
    \end{align}
    Moreover, there exists a numerical constant $C_0>0$ such that
    \begin{align}
        P_k\le C_0\left(\frac{n}{k}\right)^{\alpha},\qquad k=1,\ldots,n.
        \label{eq:critical_sequence_power_bound}
    \end{align}
\end{restatable}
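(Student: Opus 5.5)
The plan is to build the sequence \emph{backwards} from $k=n$, using \cref{eq:critical_sequence_equalities} as a recursion, and then to control its growth by comparing it with an explicit power-law supersolution.

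\textbf{Construction.} Set $P_n:=4$ and $R_k:=\sum_{j=k+1}^n\kappa_j$, so the second equality in \cref{eq:critical_sequence_equalities} reads $kP_k=4k+2R_k$. The first equality for indices $k$ and $k-1$ forces
\[
\kappa_k=\frac{P_k^k}{P_{k-1}^{k-1}}\quad (k\ge2),\qquad \kappa_1=P_1 .
\]
Suppose $P_k$ and $R_k$ are already defined for some $k\ge2$. Then $P_{k-1}$ must solve
\[
p=4+\frac{2}{k-1}\Bigl(R_k+\frac{P_k^k}{p^{k-1}}\Bigr).
\]
The left side increases in $p>0$ and the right side strictly decreases from $+\infty$ to $4+2R_k/(k-1)$. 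Hence there is a unique root $P_{k-1}>4$. We then define $\kappa_k$ by the formula above, which is positive, and set $R_{k-1}=R_k+\kappa_k$. Running this down to $k=2$ and finally putting $\kappa_1:=P_1$ produces all the required numbers. Both equalities in \cref{eq:critical_sequence_equalities} then hold by construction; the product identity follows by telescoping.

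\textbf{Monotonicity.} Subtracting $kP_k=4k+2R_k$ from $(k-1)P_{k-1}=4(k-1)+2R_k+2\kappa_k$ gives
\[
(k-1)(P_{k-1}-P_k)=P_k-4+2\kappa_k>0,
\]
since $P_k\ge4$ and $\kappa_k>0$. This shows $P_1>\cdots>P_n=4$.

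\textbf{Power bound.} Eliminating $R_k$ and $\kappa_k$ yields a one-step map $P_{k-1}=g_k(P_k)$. Here $g_k(q)$ is the unique root $p>4$ of
\[
(k-1)p^{k}-(kq-4)p^{k-1}-2q^k=0.
\]
By implicit differentiation $g_k$ is increasing in $q$: the polynomial increases in $p$ at the root and decreases in $q$. So it suffices to find a supersolution $Q_k$ satisfying $Q_n\ge4$ and $g_k(Q_k)\le Q_{k-1}$ for all $k$. Backward induction then gives $P_k\le Q_k$.

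The heuristic choice is $Q_k=c(n/k)^\alpha$. With this choice $Q_{k-1}/Q_k=(k/(k-1))^\alpha$, so
\[
\kappa_k\approx Q_k\Bigl(\tfrac{k-1}{k}\Bigr)^{\alpha(k-1)}\approx e^{-\alpha}Q_k .
\]
The balance $kP_k\approx 2\sum_{j>k}\kappa_j\approx \frac{2e^{-\alpha}}{\alpha-1}\,c\,n^\alpha k^{1-\alpha}$ is consistent exactly when $(\alpha-1)e^\alpha=2$. This is \cref{eq:topk_critical_balance}, so the exponent $\alpha$ of \cref{eq:topk_critical_exponents} is the right one.

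\textbf{Main obstacle.} The hard step is making the supersolution inequality $g_k(Q_k)\le Q_{k-1}$ rigorous. Because $\alpha$ is exactly critical, the leading-order terms cancel, and the sign is decided by lower-order corrections. These come from the gap between $(1-1/k)^{\alpha(k-1)}$ and $e^{-\alpha}$, and from the additive $-4$. I would first plug $p=Q_{k-1}$, $q=Q_k$ into the polynomial and check that it is nonnegative. This amounts to checking an explicit inequality in $k$, using the monotonicity of $(1-1/k)^{k-1}$ together with the helpful $-4$ term. If the pure power law fails for small $k$, I would modify it to $Q_k=c(n/k)^\alpha\,(1+\theta/k)$ or $Q_k=c(n/k)^\alpha+D$, with $c$ and $D$ chosen to absorb the finitely many small-$k$ cases. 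In that case $C_0$ is taken large enough to cover both the correction and the base case $Q_n\ge4$.
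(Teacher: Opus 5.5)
Your construction and monotonicity argument are correct. Your polynomial $(k-1)p^k-(kq-4)p^{k-1}-2q^k=0$ is exactly the paper's recursion after an index shift, and the comparison principle via monotonicity of $g_k$ is sound.

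\textbf{The gap.} The power bound, which is the real content of the lemma, is not proved. The supersolution inequality $g_k(Q_k)\le Q_{k-1}$ is never verified, and the tools you suggest would not verify it.

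Take $Q_k=c(n/k)^\alpha$, $\rho:=(1-1/k)^\alpha$ and $D:=k(1-\rho)$. Dividing by $Q_{k-1}^k$, the check becomes $D-1+4/Q_{k-1}\ge 2\rho^k$. Several things go wrong:
\begin{itemize}
\item Both the $O(1)$ and the $O(1/k)$ terms cancel, because $2e^{-\alpha}=\alpha-1$.
\item The term $4/Q_{k-1}=(4/c)\bigl((k-1)/n\bigr)^\alpha$ is negligible when $k\ll n$.
\item Monotonicity of $(1-1/k)^k$ gives only $\rho^k\le e^{-\alpha}$ in the useful direction. This is too weak, since Bernoulli gives $D<\alpha$.
\item Even the sharper bound $\rho^k\le e^{-\alpha}e^{-\alpha/(2k)}$ fails at order $1/k^2$.
\end{itemize}
The true margin is asymptotically $\alpha^2(\alpha-1)/(24k^2)$. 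So the pure power law probably does work, but showing it needs a second-order expansion with controlled remainders, uniformly in $k\ge 2$. For the same reason, a failure would not be confined to finitely many small $k$. Also, adding a constant to $Q_k$ barely changes the ratio $Q_k/Q_{k-1}$ when $k\ll n$, so that fallback would not help.

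\textbf{The missing idea.} This is what the paper does: compare the ratio with $1-\alpha/k$ rather than with $(1-1/k)^\alpha$. For the actual sequence, set $D_k:=k(1-P_k/P_{k-1})$. Dividing the recursion by $P_{k-1}^k$ gives
\[
D_k-1+\frac{4}{P_{k-1}}=2\Bigl(1-\frac{D_k}{k}\Bigr)^k\le 2e^{-D_k}.
\]
Since $4/P_{k-1}>0$, and $D\mapsto D-1-2e^{-D}$ is increasing with its zero at $\alpha$, this forces $D_k<\alpha$. Equivalently, $P_{k-1}<P_k(1-\alpha/k)^{-1}$. Hence
\[
P_k\le 4\prod_{j=k+1}^n\Bigl(1-\frac{\alpha}{j}\Bigr)^{-1},
\]
which is well defined because $\alpha<2$. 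The bound $-\log(1-x)\le x+\widetilde C x^2$ on $[0,\alpha/2]$, together with harmonic-sum estimates, then gives $\log(P_k/4)\le\alpha\log(n/k)+O(1)$.

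In your own framework, $Q_k:=4\prod_{j=k+1}^n(1-\alpha/j)^{-1}$ is a supersolution. For it, $D=\alpha$ exactly, and the check reduces to $2(1-\alpha/k)^k<2e^{-\alpha}=\alpha-1$, which is immediate. The second-order discrepancy is then absorbed by the summable $\sum_j j^{-2}$ term in the product estimate, instead of deciding a delicate sign condition.
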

\begin{proof}[Proof of \cref{lem:CriticalComparisonSequence}]
    Set $P_n=4$. For $k=n-1,\ldots,1$, define $P_k>P_{k+1}$ as the unique solution of
    \begin{align}
        2\frac{P_{k+1}^{k+1}}{P_k^k}=kP_k-(k+1)P_{k+1}+4.
        \label{eq:critical_sequence_recursion}
    \end{align}
    To see that this solution exists and is unique, fix $q=P_{k+1}\ge4$ and consider
    \[
        h(P):=kP-(k+1)q+4-2q^{k+1}P^{-k}.
    \]
    The function $h$ is strictly increasing on $(0,\infty)$, satisfies $h(q)=4-3q<0$, and tends to $+\infty$ as $P\to\infty$. Hence there is a unique root larger than $q$.

    Define
    \begin{align*}
        \kappa_1:=P_1,\qquad \kappa_{k+1}:=\frac{P_{k+1}^{k+1}}{P_k^k},\qquad k=1,\ldots,n-1.
    \end{align*}
    The products telescope, and a backward induction using \cref{eq:critical_sequence_recursion} gives
    \begin{align*}
        \prod_{j=1}^{k}\kappa_j=P_k^k,\qquad \sum_{j=k+1}^{n}\kappa_j=\frac{k}{2}(P_k-4),\qquad k=1,\ldots,n.
    \end{align*}
    These identities prove \cref{eq:critical_sequence_equalities}.

    We now establish the growth bound. For $k=1,\ldots,n-1$, set
    \begin{align*}
        \rho_k:=\frac{P_{k+1}}{P_k},\qquad D_k:=(k+1)(1-\rho_k).
    \end{align*}
    Dividing \cref{eq:critical_sequence_recursion} by $P_k$ yields
    \begin{align*}
        2\rho_k^{k+1}=D_k-1+\frac{4}{P_k}.
    \end{align*}
    Since $P_k\ge4$ and $\rho_k=1-D_k/(k+1)$, we obtain
    \begin{align*}
        D_k-1<2\rho_k^{k+1}\le2e^{-D_k}.
    \end{align*}
    The function $D\mapsto D-1-2e^{-D}$ is strictly increasing and vanishes at $\alpha$, because \cref{eq:topk_critical_balance} is equivalent to $\alpha-1=2e^{-\alpha}$. Hence $D_k<\alpha$, and therefore
    \begin{align*}
        \rho_k>1-\frac{\alpha}{k+1}.
    \end{align*}
    Using $P_n=4$ and telescoping the ratios, we have
    \begin{align}
        P_k=4\prod_{j=k+1}^{n}\rho_{j-1}^{-1}\le4\prod_{j=k+1}^{n}\left(1-\frac{\alpha}{j}\right)^{-1}.
        \label{eq:critical_sequence_product_bound}
    \end{align}
    Since $\alpha<2$, the arguments $\alpha/j$ are uniformly bounded away from $1$ for $j\ge2$. Thus there is a numerical constant $\widetilde C_1>0$ such that $-\log(1-x)\le x+\widetilde C_1x^2$ throughout the required range. Applying this to \cref{eq:critical_sequence_product_bound} and using the standard harmonic-sum estimates gives
    \begin{align*}
        \log\frac{P_k}{4}&\le \alpha\sum_{j=k+1}^{n}\frac1j+\widetilde C_1\alpha^2\sum_{j=k+1}^{n}\frac1{j^2}\le \alpha\log\frac{n}{k}+\widetilde C_2,
    \end{align*}
    for a numerical constant $\widetilde C_2>0$. Exponentiating proves \cref{eq:critical_sequence_power_bound}.
\end{proof}
\begin{restatable}[Sum--maximum bound]{lemma}{lemTopKSumMaximumBound}
\label{lem:TopKSumMaximumBound}
    Let $\zeta_1\ge\cdots\ge\zeta_n>0$, and define
    \[
        M:=\zeta_1,\qquad S:=\sum_{j=1}^{n}\zeta_j.
    \]
    Suppose that, for every $k=1,\ldots,n$,
    \begin{align}
        \left(\prod_{j=1}^{k}\zeta_j\right)^{1/k}\le4+\frac{2}{k}\sum_{j=k+1}^{n}\zeta_j.
        \label{eq:sum_max_assumption}
    \end{align}
    Then there exists a numerical constant $C>0$ such that
    \begin{align}
        S\le C\left(nM^{1-\nu}+M+n\right).
        \label{eq:topk_sum_max_bound}
    \end{align}
\end{restatable}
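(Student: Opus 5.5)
We compare $\zeta$ with the critical sequence $\kappa$ of \cref{lem:CriticalComparisonSequence}, in the same spirit as the product--sum crossing argument of \cref{lem:ProductSumCrossing}, and then integrate a counting bound as in \cref{lem:OrderSensitiveSumMaximum}.

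\textbf{Step 1: a counting bound.} For a threshold $h>0$, let $k=\gN(h):=|\{j:\zeta_j>h\}|$. Since $\zeta$ is sorted, these are the top $k$ entries, so \cref{eq:sum_max_assumption} gives
\[
h^k<\prod_{j\le k}\zeta_j\le\Bigl(4+\tfrac{2}{k}\Sigma_k\Bigr)^k,
\qquad\text{i.e.,}\qquad
\Sigma_k\ge\tfrac{k}{2}(h-4).
\]
On its own this is too weak, because $\Sigma_k$ can be large. I would therefore compare against $\kappa$ built with horizon $n$. The claim is that the extremal configuration (the one maximizing $S$ for a given $M$, or for a given top-$k$ structure) saturates every constraint. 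This suggests showing, by backward induction on $k$, that $G_k:=\bigl(\prod_{j\le k}\zeta_j\bigr)^{1/k}$ cannot exceed $P_k$ by much. More concretely, I want $\zeta_j\lesssim (n/j)^\alpha$ in an averaged sense once $\zeta_j\ge 4$, and hence
\[
\gN(h)\lesssim n\,h^{-\nu}\qquad\text{for } h\ge c,
\]
with $\nu=1/\alpha$. Step 3 shows that this would follow from the recursion \cref{eq:critical_sequence_recursion} together with \cref{eq:critical_sequence_power_bound}.

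\textbf{Step 2: integrate.} Write $S=\int_0^M\gN(h)\,dh$ and split the integral at a constant threshold $h_0$ (say $8$):
\[
S=\int_0^{h_0}\gN(h)\,dh+\int_{h_0}^{M}\gN(h)\,dh .
\]
The first piece is at most $h_0n$, which produces the $n$ term. The second piece is at most $Cn\int_0^M h^{-\nu}\,dh = C'nM^{1-\nu}$. A residual $M$ term covers boundary effects: the top entry itself, and the regime $n\lesssim M^{\nu}$, where the count bound degenerates and the trivial count $\gN\le n$ would instead give $nM$.

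\textbf{Step 3 (main obstacle).} The hard part is converting the coupled constraints into the tail bound $\gN(h)\lesssim nh^{-\nu}$. My first attempt would be a crossing argument against $\kappa$. Suppose $\gN(h)=k$ with $h\ge C_0(n/k)^\alpha\ge P_k$. Then $G_k>P_k$, so
\[
\Sigma_k(\zeta)>\tfrac{k}{2}(P_k-4)=\Sigma_k(\kappa).
\]
Then take the smallest $m>k$ at which the tail sums cross back, $\Sigma_m(\zeta)\le\Sigma_m(\kappa)$. On the indices between $k$ and $m$, the summation-by-parts/concavity argument used in \cref{lem:ProductSumCrossing} yields $\prod_{j\le m}\zeta_j>\prod_{j\le m}\kappa_j=P_m^m$. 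By the equality structure of $\kappa$, this forces $\Sigma_m(\zeta)>\Sigma_m(\kappa)$, contradicting the choice of $m$. The exception is when no such crossing exists before index $n$, and at $n$ we have $\Sigma_n=0$ for both sequences, so a crossing does occur there. That endpoint case must then be handled with the last constraint, $G_n\le 4$, which contradicts $G_k>P_k\ge4$ together with monotonicity.

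Making the concavity step rigorous when $\kappa$ is not monotone in the needed direction, and tracking where $k$ is so small that $C_0(n/k)^\alpha$ exceeds $M$, is where I expect the technical difficulty. That range is what the extra $M$ term absorbs.
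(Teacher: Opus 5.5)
Your proof is correct, and it takes a different route from the paper's.

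\textbf{The paper's route.} The paper makes a single global comparison. It caps the critical sequence at $M$ on its first $k_0$ entries, where $P_{k_0}\le M<P_{k_0-1}$. It checks that this $\widehat\kappa$ satisfies the reversed constraints for $k>k_0$. One application of the strict form of \cref{lem:ProductSumCrossing} then gives $S\le\sum_j\widehat\kappa_j\le\frac32 k_0M$. Finally, \cref{eq:critical_sequence_power_bound} bounds $k_0-1\le C_0^{\nu}nM^{-\nu}$.

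\textbf{Your route.} You compare against $\kappa$ itself and restart the crossing argument at an intermediate index. Your Step~3 uses $h$ only through $G_k>h$. It therefore shows that $G_k(\zeta):=\bigl(\prod_{j\le k}\zeta_j\bigr)^{1/k}>P_k$ is impossible for every $k\in[n]$:
\begin{enumerate}
\item For $k=n$, this contradicts the constraint at $n$ directly.
\item For $k<n$, the constraint at $k$ gives $\Sigma_k(\zeta)>\Sigma_k(\kappa)$.
\item The first later crossing $m$ exists because $\Sigma_n(\zeta)=\Sigma_n(\kappa)=0$.
\item The tangent-line and summation-by-parts step on $(k,m]$ gives $G_m(\zeta)>P_m$.
\item The constraint at $m$ then yields $\Sigma_m(\zeta)>\frac m2(P_m-4)=\Sigma_m(\kappa)$, a contradiction.
\end{enumerate}

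\textbf{What each route buys.} You obtain the pointwise domination $G_k(\zeta)\le P_k$ for all $k$. Hence $\gN(h)<C_0^{\nu}nh^{-\nu}$ for every $h>0$, and layer-cake integration gives $S\le\frac{C_0^{\nu}}{1-\nu}\,nM^{1-\nu}$ directly. This is slightly stronger than the stated bound, since it has no additive $M$ or $n$ terms. It also mirrors the count-then-integrate structure of \cref{lem:OrderSensitiveCount,lem:OrderSensitiveSumMaximum}. The paper's route avoids the count function and the integral. It pays for that with the auxiliary $\widehat\kappa$, the verification of its reversed constraints, and the case split on $k_0$, which is where its extra $M$ term comes from.

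\textbf{The difficulties you flag are not real.}
\begin{itemize}
\item The concavity step uses only that $\psi_j=1/\zeta_j$ is nondecreasing, i.e., monotonicity of $\zeta$, never of $\kappa$.
\item Positivity of each $\Delta_i$ on $(k,m]$ follows from the strict inequality at $k$, the minimality of $m$, and $\Sigma_m(\zeta)\le\Sigma_m(\kappa)$.
\item No small-$k$ regime needs separate treatment. The split at $h_0$ and the residual $M$ term are therefore unnecessary.
\end{itemize}

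\textbf{One small misstatement.} The endpoint case $m=n$ is not settled by ``monotonicity.'' Monotonicity of $\zeta$ gives $G_n\le G_k$, which is the wrong direction. The case is settled by the same crossing step, which yields $G_n>P_n=4$ and contradicts the constraint at $k=n$.
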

\begin{proof}[Proof of \cref{lem:TopKSumMaximumBound}]
    Let $(\kappa_j)_{j=1}^{n}$ and $(P_k)_{k=1}^{n}$ be the sequences from \cref{lem:CriticalComparisonSequence}. If $M\le4$, then $S\le4n$, so \cref{eq:topk_sum_max_bound} is immediate. Assume from now on that $M>4$. If $M\ge P_1$, set $k_0=1$; otherwise, choose the unique $k_0\in\{2,\ldots,n\}$ such that
    \[
        P_{k_0}\le M<P_{k_0-1}.
    \]
    Define a comparison sequence $\widehat\kappa_1,\ldots,\widehat\kappa_n$ by
    \begin{align*}
        \widehat\kappa_j:=\begin{cases}
            M, & j\le k_0,\\
            \kappa_j, & j>k_0.
        \end{cases}
    \end{align*}
    For every $k>k_0$, \cref{eq:critical_sequence_equalities} and $M\ge P_{k_0}$ imply
    \begin{align}
        \left(\prod_{j=1}^{k}\widehat\kappa_j\right)^{1/k}=P_k\left(\frac{M}{P_{k_0}}\right)^{k_0/k}\ge P_k=4+\frac{2}{k}\sum_{j=k+1}^{n}\widehat\kappa_j.
        \label{eq:sum_max_gamma_reverse_constraint}
    \end{align}
    Moreover,
    \begin{align}
        \sum_{j=1}^{n}\widehat\kappa_j=k_0M+\frac{k_0}{2}(P_{k_0}-4)\le\frac32k_0M.
        \label{eq:sum_max_gamma_sum}
    \end{align}

    We claim that $S\le\sum_{j=1}^{n}\widehat\kappa_j$. Suppose otherwise. Since $\zeta$ is decreasing and $S>\sum_j\widehat\kappa_j$, the strict form of \cref{lem:ProductSumCrossing} gives an index $k\in[n]$ such that
    \begin{align}
        \prod_{j=1}^{k}\zeta_j>\prod_{j=1}^{k}\widehat\kappa_j,\qquad \sum_{j=k+1}^{n}\zeta_j\le\sum_{j=k+1}^{n}\widehat\kappa_j.
        \label{eq:sum_max_product_sum_crossing}
    \end{align}
    The inequality $k\le k_0$ is impossible because $\zeta_j\le M=\widehat\kappa_j$ for every $j\le k_0$, so necessarily $k>k_0$. Using \cref{eq:sum_max_assumption,eq:sum_max_product_sum_crossing,eq:sum_max_gamma_reverse_constraint}, we then obtain
    \begin{align*}
        \left(\prod_{j=1}^{k}\zeta_j\right)^{1/k}&\le4+\frac{2}{k}\sum_{j=k+1}^{n}\zeta_j\le4+\frac{2}{k}\sum_{j=k+1}^{n}\widehat\kappa_j\le\left(\prod_{j=1}^{k}\widehat\kappa_j\right)^{1/k},
    \end{align*}
    contradicting the strict prefix-product inequality in \cref{eq:sum_max_product_sum_crossing}. Therefore $S\le\sum_j\widehat\kappa_j$.

    It remains to bound $k_0$. If $k_0\ge2$, then $M<P_{k_0-1}$ and \cref{eq:critical_sequence_power_bound} give
    \begin{align}
        M<C_0\left(\frac{n}{k_0-1}\right)^{\alpha},\qquad\text{hence}\qquad k_0-1\le C_0^{\nu}nM^{-\nu}.
        \label{eq:sum_max_m_bound}
    \end{align}
    Combining \cref{eq:sum_max_gamma_sum,eq:sum_max_m_bound} yields $S\le C'(M+nM^{1-\nu})$ for a numerical constant $C'$. The case $k_0=1$ satisfies the same bound directly, and adding the previously handled case $M\le4$ proves \cref{eq:topk_sum_max_bound}.
\end{proof}
We can now combine the preceding estimates. The exponent obtained from the top-$k$ analysis is
\begin{align*}
    \beta:=\frac{2\alpha}{1+\alpha}=\frac{2}{1+\nu}=1.188000437\ldots,
\end{align*}
which, since $\nu=1/\alpha$, is equivalent to
\begin{align}
    \frac{1}{\beta}=\frac{1+\alpha}{2\alpha}=\frac{1+\nu}{2}.
    \label{eq:topk_anytime_exponent_identity}
\end{align}
\begin{restatable}[Anytime lower bound via top-$k$ selection]{theorem}{thmTopKAnytime}
\label{thm:TopKAnytime}
    For every fixed positive infinite step-size schedule $\veta=(\eta_t)_{t\ge1}$,
    \begin{align}
        \limsup_{n\to\infty}n^{\beta}\mathcal R_n(\veta_{1:n})>0.
        \label{eq:topk_anytime_limsup}
    \end{align}
    Equivalently, no positive infinite step-size schedule satisfies
    \begin{align}
        \mathcal R_n(\veta_{1:n})=o\open{n^{-\beta}}.
        \label{eq:topk_anytime_no_little_o}
    \end{align}
\end{restatable}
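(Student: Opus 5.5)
The plan is to mirror the proof of \cref{thm:AnytimeLowerBoundFull}, replacing \cref{lem:ImprovedRecordSumBound} by the top-$k$ machinery of \cref{lem:TopKRecordConstraints,lem:TopKSumMaximumBound}. We first split into cases according to whether the step-sizes are bounded.

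\textbf{Case 1 (bounded steps).} Suppose $M_\infty:=\sup_n M_n<\infty$. Then $S_n\le nM_\infty$. The empty-checkpoint bound \cref{eq:naive_bound_empty_checkpoint} gives $r_n\ge 1/(4(1+nM_\infty))$. Since $\beta>1$, this yields $n^\beta r_n\to\infty$.

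\textbf{Case 2 (unbounded steps).} There are infinitely many record times. If $r_n\not\to0$ along record times, we are done, so assume $r_n\to0$ along them.

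Fix a large record time $n$ with $r_n<1/4$. By \cref{lem:TopKRecordConstraints}, the decreasing rearrangement $\zeta_{n,\cdot}$ of $\veta_{1:n}$ satisfies the hypothesis \cref{eq:sum_max_assumption} for every $k\in[n]$. Moreover, $\zeta_{n,1}=M_n$ because $n$ is a record time. \cref{lem:TopKSumMaximumBound} then gives
\[
S_n\le C\bigl(nM_n^{1-\nu}+M_n+n\bigr).
\]
Next, \cref{eq:anytime_single_record_step} together with $S_{n-1}=S_n-M_n$ gives, exactly as in \cref{eq:Mn_bound_from_naive},
\[
M_n\le \frac{4\sqrt{r_n}}{1+4\sqrt{r_n}}\,(2+S_n)\le 4\sqrt{r_n}\,(2+S_n).
\]
Finally, \cref{eq:naive_bound_empty_checkpoint} gives $S_n\ge (1-4r_n)/(4r_n)\to\infty$.

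\textbf{Absorbing the extra terms.} The new feature compared with \cref{lem:ImprovedRecordSumBound} is the pair of extra terms $M_n+n$, which must be absorbed.
\begin{itemize}
\item The term $CM_n\le 4C\sqrt{r_n}(2+S_n)=o(S_n)$, since $r_n\to0$. For $n$ large it is therefore at most $S_n/3$.
\item For the term $Cn$, suppose first that $Cn\ge S_n/3$ along a subsequence. Then $S_n\lesssim n$, and with $S_n\gtrsim 1/r_n$ this gives $nr_n\gtrsim1$, hence $n^\beta r_n\to\infty$ because $\beta>1$. So this subsequence already satisfies the claim.
\end{itemize}
On the remaining record times we therefore have $S_n\le 3CnM_n^{1-\nu}$.

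\textbf{Conclusion.} Substitute $M_n\lesssim \sqrt{r_n}\,S_n$ into $S_n\le 3CnM_n^{1-\nu}$ and divide by $S_n$:
\[
1\lesssim n\,r_n^{(1-\nu)/2}S_n^{-\nu}.
\]
Then use $S_n\gtrsim r_n^{-1}$ to get $1\lesssim n\,r_n^{(1+\nu)/2}$. By \cref{eq:topk_anytime_exponent_identity}, $(1+\nu)/2=1/\beta$, so $n^\beta r_n\gtrsim1$ with an absolute constant. This holds for infinitely many $n$, which proves \cref{eq:topk_anytime_limsup}, and \cref{eq:topk_anytime_no_little_o} follows.

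The main obstacle is the bookkeeping in Case 2 for the additive $M+n$ terms of \cref{lem:TopKSumMaximumBound}. The clean route is the subsequence dichotomy above: either the $n$-term dominates, which gives a rate of at least $n^{-1}$, or it is negligible. The $M$-term is harmless because $M_n=O(\sqrt{r_n}\,S_n)$.
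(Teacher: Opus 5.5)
Your proposal is correct and follows essentially the paper's route. It uses the empty-checkpoint bound, \cref{lem:TopKRecordConstraints} feeding \cref{lem:TopKSumMaximumBound}, the single-record-step bound on $M_n$, and the identity $(1+\nu)/2=1/\beta$. The differences are only bookkeeping: the paper argues by contradiction from $r_n=o(n^{-\beta})$, which forces $S_n=\omega(n^\beta)$ and hence $M_n\to\infty$, so it needs no bounded/unbounded split. It also keeps the extra terms as $\sqrt{r_n}+nr_n$ in one master inequality instead of absorbing them through your dichotomy, which as a side benefit lets you state a uniform numerical constant in the $\limsup$.
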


\begin{proof}[Proof of \cref{thm:TopKAnytime}]
    Suppose for contradiction that $r_n=o(n^{-\beta})$. Selecting no big step in \cref{lem:GeneralLowerBound} gives $r_n\ge[4(1+S_n)]^{-1}$. Since $r_n\to0$, for all sufficiently large $n$ this implies
    \begin{align}
        S_n\ge\frac{1}{4r_n}-1\ge\frac{1}{8r_n}.
        \label{eq:topk_anytime_sum_lower}
    \end{align}
    Therefore $S_n=\omega(n^{\beta})$, and hence
    \begin{align*}
        M_n\ge\frac{S_n}{n}=\omega\open{n^{\beta-1}}\longrightarrow\infty.
    \end{align*}
    In particular, there are infinitely many record times.

    Fix a sufficiently large record time $n$. Since $r_n<1/4$, \cref{lem:TopKRecordConstraints} gives \cref{eq:anytime_topk_record_constraint}, so \cref{lem:TopKSumMaximumBound} applied to the decreasing rearrangement of the first $n$ steps yields
    \begin{align}
        S_n\le C\left(nM_n^{1-\nu}+M_n+n\right).
        \label{eq:topk_anytime_sum_upper}
    \end{align}
    At the same time, \cref{eq:anytime_single_record_step} implies
    \[
        M_n\le4\,(2+S_{n-1})\sqrt{r_n}.
    \]
    Since $S_n\to\infty$ along the record times under consideration and $S_{n-1}\le S_n$, there exists a numerical constant $C_1>0$ such that, for all sufficiently large record times,
    \begin{align}
        M_n\le C_1S_n\sqrt{r_n}.
        \label{eq:topk_anytime_max_upper}
    \end{align}
    Substituting \cref{eq:topk_anytime_max_upper} into \cref{eq:topk_anytime_sum_upper}, dividing by $S_n$, and then using \cref{eq:topk_anytime_sum_lower} gives
    \begin{align}
        1&\le C_2\left(nS_n^{-\nu}r_n^{(1-\nu)/2}+\sqrt{r_n}+\frac{n}{S_n}\right)\nonumber\\
        &\le C_3\left(nr_n^{\nu+(1-\nu)/2}+\sqrt{r_n}+nr_n\right)\nonumber\\
        &=C_3\left(nr_n^{1/\beta}+\sqrt{r_n}+nr_n\right),
        \label{eq:topk_anytime_master_inequality}
    \end{align}
    for numerical constants $C_2,C_3>0$, where the equality uses $\nu+\frac{1-\nu}{2}=\frac{1+\nu}{2}=\frac{1}{\beta}$ from \cref{eq:topk_anytime_exponent_identity}.
    Under the assumed rate $r_n=o(n^{-\beta})$, all three terms on the right-hand side of \cref{eq:topk_anytime_master_inequality} vanish along the infinite record-time subsequence:
    \begin{align*}
        nr_n^{1/\beta}=(n^{\beta}r_n)^{1/\beta}\longrightarrow0,\qquad \sqrt{r_n}\longrightarrow0,\qquad nr_n=n^{1-\beta}(n^{\beta}r_n)\longrightarrow0.
    \end{align*}
    This contradicts \cref{eq:topk_anytime_master_inequality} and proves \cref{eq:topk_anytime_no_little_o}, equivalently \cref{eq:topk_anytime_limsup}.
\end{proof}